\newcommand{\rn}[2]{
    \tikz[remember picture,baseline=(#1.base)]\node [inner sep=0] (#1) {$#2$};%
}
\title{Diagram involutions and homogeneous Ricci-flat metrics}
\author{Diego Conti, Viviana del Barco, and Federico A. Rossi}
\def\namedlabel#1#2{\begingroup
   \def\@currentlabel{#2}%
   \label{#1}\endgroup
}
\newtheorem{theorem}{Theorem}[section]
\newtheorem{lemma}[theorem]{Lemma}
\newtheorem{corollary}[theorem]{Corollary}
\newtheorem{proposition}[theorem]{Proposition}
\theoremstyle{definition}
\newtheorem{definition}[theorem]{Definition}
\newtheorem{example}[theorem]{Example}
\theoremstyle{remark}
\newtheorem{remark}[theorem]{Remark}
\newcommand{\abs}[1]{\left\vert#1\right\vert}
\newcommand{\R}{\mathbb{R}}
\newcommand{\lie}[1]{\mathfrak{#1}}     
\newcommand{\g}{\lie{g}}
\newcommand{\Z}{\mathbb{Z}}
\newcommand{\hook}{\lrcorner\,}
\newcommand{\Sp}{\mathrm{Sp}}
\newcommand{\SO}{\mathrm{SO}}
\newcommand{\Gtwo}{\mathrm{G}_2}
\newcommand{\so}{\mathfrak{so}}
\newcommand{\GL}{\mathrm{GL}}
\newcommand{\SL}{\mathrm{SL}}
\newcommand{\Span}[1]{\operatorname{Span}\left\{#1\right\}}
\DeclareMathOperator{\ric}{ric}  
\DeclareMathOperator{\ad}{ad}
\newcolumntype{C}{>{$}c<{$}}
\newcolumntype{L}{>{$}l<{$}}
\newcolumntype{R}{>{$}r<{$}}
\newcommand{\B}{\mathcal{B}}
\newcommand{\st}{\:\mid\:}          
\newcommand{\eps}{\varepsilon}   
\newcommand{\coord}{\mathrm{coord}}
\newcommand{\spg}{\mathfrak{sp}}
\newcommand{\mg}{\mathfrak{g}}
\newcommand{\mb}{\mathfrak{b}}
\newcommand{\mn}{\mathfrak{n}}
\newcommand{\mz}{\mathfrak{z}}
\newcommand{\mk}{\mathfrak{k}}
\newcommand{\mh}{\mathfrak{h}}
\newcommand{\ma}{\mathfrak{a}}
\newcommand{\mgg}{\mathfrak{g}}
\newcommand{\mpp}{\mathfrak{p}}
\newcommand{\bil}{\left\langle \;,\;\right\rangle}
\newcommand{\lela}{\left\langle}
\newcommand{\rira}{\right\rangle}
\newcommand{\noi}{\noindent}
\newcommand{\mcB}{\mathcal{B}}
\newcommand{\mcX}{\mathcal{X}}
\newcommand{\mcZ}{\mathcal{Z}}
\newcommand{\mnT}{\mn_\Theta}
\begin{document}

\maketitle

\begin{abstract}
We introduce a combinatorial method to construct indefinite Ricci-flat metrics on nice nilpotent Lie groups.

We prove that every nilpotent Lie group of dimension $\leq6$, every nice nilpotent Lie group of dimension $\leq7$ and every two-step nilpotent Lie group attached to a graph admits such a metric.  We construct infinite families of Ricci-flat nilmanifolds associated to parabolic nilradicals in the simple Lie groups $\SL(n)$, $\SO(p,q)$, $\Sp(n,\R)$. Most of these metrics are shown not to be flat.

\end{abstract}

\renewcommand{\thefootnote}{\fnsymbol{footnote}}
\footnotetext{\emph{MSC 2010}: 22E25: 53C50,  53C25,  17B30.}
\footnotetext{\emph{Keywords}: Ricci-flat metrics, nilpotent Lie groups, pseudoriemannian homogeneous metrics.}
\renewcommand{\thefootnote}{\arabic{footnote}}

\section*{Introduction}
The construction of metrics satisfying the Einstein equation
\begin{equation}
 \label{eqn:einstein}
\ric= \lambda g\end{equation}
 is a classical problem in Riemannian geometry. Both explicit examples and general conditions for the existence of a solution on a given manifold are now known, mostly in the context of K\"ahler geometry and special holonomy, or more generally special geometries (see~\cite{Besse} and the references therein). Nonetheless,  a complete classification appears to be hopeless.

The homogeneous case is deserving of particular attention, in that the second order PDE~\eqref{eqn:einstein} is turned into a set of polynomial equations, though generally fairly complicated. Indeed, even in the homogeneous context, whilst both sufficient and necessary conditions on a homogeneous manifold are known for the existence of a Riemannian Einstein metric (see~\cite{WangZiller}), a classification has not yet been achieved. Among homogeneous Einstein Riemannian manifolds one finds irreducible symmetric spaces and more generally isotropy irreducible spaces, classified in~\cite{Wolf:TheGeometry}, as well as simple Lie groups (with a non-unique metric, see~\cite{Jensen:scalar}), and on the other hand Einstein solvmanifolds, whose structure is reduced to the study of particular metrics on nilpotent Lie groups, known as nilsolitons (\cite{Heber:noncompact,Lauret:Einstein_solvmanifolds}). A remarkable link between the two settings was established in~\cite{Tamaru} by Tamaru, who showed that under the natural embedding of a parabolic nilradical in the corresponding symmetric space, the ambient Einstein metric reduces to a nilsoliton metric. In fact, all known homogeneous Riemannian Einstein manifolds with negative curvature can be realized as solvmanifolds, motivating the so-called Aleksveesky conjecture.
Notice that the case of Ricci-flat homogeneous metrics, i.e. with Ricci tensor equal to zero, is essentially trivial in that by~\cite{AlekseevskiKimelFel} it only gives flat metrics.

The pseudoriemannian side of  the story is quite different. Whilst the typical constructions of Riemannian homogeneous Einstein manifolds have an indefinite counterpart (see~\cite{Kath:PseudoRiemannian,Derdzinski,DerdzinskiGal}), they are far from exhausting Einstein indefinite homogeneous metrics, due to  a much greater flexibility. In particular, it is well known that a homogeneous Ricci-flat indefinite metric is not necessarily flat.  Examples  arise in the ad-invariant context (which however gives very few examples, see e.g.~\cite{dBO12,Favre,Ov16}), low-dimensional solvable Lie groups (see~\cite{CalvarusoZaeim:lorentzian,CalvarusoZaeim:neutral} for a classification  in dimension four, or~\cite{ContiRossi:RicciFlat} for some sporadic examples in dimension $\leq 8$), step two nilpotent Lie groups (\cite{Boucetta,Guediri2003,GuediriBinAsfour}), and associated to metrics with special holonomy, or more generally special geometries (see~\cite{AlekseevskyMedoriTomassini:Homogeneous2009,
FinoLujan:TorsionFreeG22,FinoKath,
Freibert:calibrated,IvanovZamkovoy:parahermitian,Kath:Indefinite,
SchaferSchulteHengesbach}).

Even though these examples appears to suggest that many  homogeneous manifolds admit an invariant Ricci-flat metric, a systematic construction appears to be lacking at the time of writing.

This paper gives a contribution in this direction by introducing a systematic construction of Ricci-flat indefinite metrics on a large class of homogeneous spaces. Inspired by the Riemannian situation, which seems to suggest that an understanding of homogeneous Einstein metrics will entail a thourough study of the nilpotent case, we focus on the case of left-invariant metrics on  nilpotent Lie groups. Our construction applies in particular to many  nilpotent Lie groups satisfying the sufficient condition given in~\cite{Malcev} for the existence of a lattice, giving rise to compact Ricci-flat manifolds (in fact, we obtain infinitely many distinct diffeomorphism types, see Remark~\ref{remark:infinitelymanydiffeomorphismtypes}). Nonetheless, an analysis of their corresponding curvature tensor allows us to show that in most situations the left-invariant Ricci-flat metrics can be chosen to be nonflat.

Our construction applies to nice nilpotent Lie algebras. Nice Lie algebras were introduced in~\cite{LauretWill:Einstein} in the context of the construction of Einstein Riemannian metrics on solvmanifolds (more precisely, the attached nilsolitons), and they find application in other geometric problems involving the Ricci tensor (see e.g.~\cite{LauretDere:OnRicciNegative,LauretWill:diagonalization}); their structure is largely described by a kind of directed graph known as a nice diagram (\cite{ContiRossi:Construction}). Whilst our ultimate goal is of a purely geometric nature, it is this combinatorial nature of nice Lie algebras that enables us to produce Ricci-flat metrics through combinatorial tools.

Indeed, we consider involutions of the nice diagram that satisfy what we call the \emph{arrow-breaking} condition (see Definition~\ref{def:arrowbreaking}); loosely speaking, this means they are as far as possible from being automorphisms of the diagram. This should be contrasted with~\cite{ContiRossi:RicciFlat}, where diagram automorphisms were used to produce Ricci-flat metrics. The present method appears to be much more effective, partly because it requires no additional computation once the combinatorial condition is satisfied.

The other measure of the effectiveness of arrow-breaking involutions is the fact that it produces Ricci-flat metrics on all the nice nilpotent Lie algebras of dimension $\leq7$ except one (see Theorem~\ref{teo:nicedimless7}). This abundance of examples  suggests a natural question:

\begin{center}
\emph{does every nilpotent Lie algebra admit a Ricci-flat metric?}
\end{center}
We  answer this question in the positive for dimension $\leq6$, by producing explicit Ricci-flat metrics on the Lie algebras not covered by the arrow-breaking construction. In addition, we show that every nice nilpotent Lie algebra of dimension $\leq 7$ has a Ricci-flat metric, which can be chosen to be nonflat, except for abelian Lie algebras and two low-dimensional exceptions. We leave the above question open for the $34+ 5$ families of nilpotent Lie algebras of dimension 7 that do not admit a nice basis listed in~\cite{ContiRossi:Construction}, as well as for higher dimensions. It is worth noticing that the same question for flat metrics was answered negatively in \cite{AM03}, namely, there exist nilpotent Lie algebras not admitting any flat metric.

We consider two standard constructions to produce infinite families of nice nilpotent Lie algebras, namely the two-step nilpotent Lie algebras associated to a graph (see~\cite{DaniMainkar:AnosovAutCompactNil}), and the nilradicals of parabolic subalgebras of split simple Lie algebras (see~\cite{Ko1}). We  apply our construction to produce infinite families of Ricci-flat metrics in both situations.

For Lie algebras associated to a graph, we prove that an arrow-breaking involution, hence a Ricci-flat metric, always exists. This is obtained as a consequence of a more general result concerning the existence of arrow-breaking involutions on Lie algebras $\mgg$ with large center. In fact, we show that an arrow-breaking involution always exists when $\dim \mz(\g)\geq (\dim \g-3 )/2$, which always holds on two-step nilpotent Lie algebras associated to graphs.

For parabolic nilradicals, we obtain infinite families associated to $A_n$, $B_n$, $C_n$, as well as one example associated to $\Gtwo$. The corresponding Lie groups appear naturally as submanifolds of the symmetric spaces $\SL(n)/\SO(n)$, \linebreak $\SO(p,q)/\SO(p)\times \SO(q)$, $\Sp(n,\R)/\mathrm{U}(n)$ and $\Gtwo^*/\SO(4)$, as in~\cite{Tamaru}; however, due to the indefinite signature, our metrics do not extend to an invariant metric on the ambient space. Whilst we do not obtain Ricci-flat metrics on all the parabolic nilradicals, we emphasize that our method is constructive, and the resulting metrics are completely explicit.

\smallskip

\noindent {\bf Acknowledgements:}  V.~del Barco acknowledges the receipt of a grant from the ICTP-INdAM Research in Pairs Programme, Trieste, Italy. V.~del Barco and F.A.~Rossi acknowledge Université Paris-Sud and Università di Milano Bicocca who hosted them as visitors during part of the preparation of this work.
D.~Conti and F.A.~Rossi acknowledge GNSAGA of INdAM.

The authors express their gratitude to the anonymous referee for her/his useful suggestions.

\section{Nice Lie algebras and nice diagrams}
\label{sec:nice}
In this section we recall some basic definitions and language which will be used in the sequel.

Given a Lie algebra $\g$,  a basis  $\{e_1,\dotsc, e_n\}$ is called \emph{nice} if each $[e_i,e_j]$ is a multiple of a single basis element $e_k$ depending on $i,j$, and each $e_i\hook de^j$ is a multiple of a single $e^h$, depending on $i,j$; here, $\{e^1,\dotsc, e^n\}$ denotes the dual basis of $\g^*$. This definition was originally given in~\cite{LauretWill:Einstein}.

A \emph{nice Lie algebra} is a pair $(\g,\B)$, with $\g$ a Lie algebra and $\B$ a nice basis;  since componentwise rescaling preserves the nice basis condition, a nice Lie algebra  $(\g,\B)$  is regarded as equivalent to $(\g',\B')$ if there is a Lie algebra isomorphism $\g\cong \g'$ mapping basis elements to multiples of basis elements. This definition was used in~\cite{ContiRossi:Construction} to classify nice nilpotent Lie algebras of dimension~$\leq9$ up to equivalence. We point out that in this classification some Lie algebras appear with two inequivalent nice bases.

The properties of a nice basis are encoded in a combinatorial object called a nice diagram. Recall from~\cite{ContiRossi:Construction} that a \emph{labeled diagram} is a directed acyclic graph having no multiple arrows with the same source and destination, where each arrow is labeled with a node. A \emph{nice diagram} is a labeled diagram  $\Delta$ satisfying:
\begin{enumerate}[label=(N\arabic*)]
\item\label{enum:condNice1} any two distinct arrows with the same source have different labels;
\item\label{enum:condNice2} any two distinct arrows with the same destination have different labels;
\item\label{enum:condNice3} if $x\xrightarrow{y}z$ is an arrow, then $x$ differs from $y$ and $y\xrightarrow{x}z$ is also an arrow;
\item\label{enum:condNice4} there do not exist four nodes $x,y,z,w$ such that exactly one of
\[x\xrightarrow{y,z} w, \quad y\xrightarrow{z,x} w,\quad  z\xrightarrow{x,y} w\]
holds. The notation $x\xrightarrow{y,z} w$ means that $x,y,z$ are distinct nodes and there is a node $u$ such that $y\xrightarrow{z}u$, $x\xrightarrow{u}z$ belong to the diagram.
\end{enumerate}

Each nice nilpotent Lie algebra $(\g,\B)$ has an associated nice diagram $\Delta$ with set of nodes $N(\Delta)=\B$, obtained by declaring that $x\xrightarrow{y}z$ is an arrow if and only if $[x,y]$ is a nonzero multiple of $z$; one easily sees that~\ref{enum:condNice3} is a consequence of $[x,y]=-[y,x]$,~\ref{enum:condNice4} follows from the Jacobi identity, and~\ref{enum:condNice1}, \ref{enum:condNice2} follow from the definition of nice basis. The assumption that $\g$ is nilpotent is reflected in the fact that $\Delta$ is acyclic; definitions can be adjusted to work more generally, but we will not need to do so in the present paper.

There is a natural notion of isomorphism for nice diagrams, and it is clear that equivalent nice Lie algebras determine isomorphic nice diagrams. This is not a one-to-one correspondence, however:

\begin{itemize}
\item A nice diagram can have no associated Lie algebra. Consider for instance the nice diagram containing the nodes $e_1,\dotsc, e_7$ and the arrows
\[e_1\xrightarrow{e_2}e_3,\;
 e_1\xrightarrow{e_3}e_4,\;
 e_1\xrightarrow{e_4}e_5,\;
 e_2\xrightarrow{e_5}e_6,\;
 e_3\xrightarrow{e_4}e_6,\;
 e_1\xrightarrow{e_6}e_7,\;
 e_3\xrightarrow{e_5}e_7,
\]
together with their symmetric given by \ref{enum:condNice3}. A Lie algebra $\g$ with this diagram would have the form
\[(0,0,c_{123}e^{12},c_{134}e^{13}, c_{145}e^{14}, c_{256}e^{25}+c_{346}e^{34},c_{167}e^{16}+c_{357}e^{35}),\]
where the $c_{ijk}$ are nonzero constants. This notation means that relative to some basis $e^1,\dotsc, e^7$ of $\g^*$,  $de^1=de^2=0$, $de^3=c_{123}e^{12}$ and so on, where as usual $e^{12}$ is short for $e^1\wedge e^2$. It is straightforward to check that the condition $d^2=0$ is not satisfied, implying that there is no Lie algebra with this diagram (see also~\cite[Remark 1.7]{ContiRossi:Construction}).
\item A nice diagram can have more than one associated Lie algebra. For instance, the nice nilpotent Lie algebras
\begin{align*}
\texttt{6431:2a}&\quad(0,0,e^{12},e^{13},e^{23},e^{14}+e^{25})\\
\texttt{6431:2b}&\quad(0,0,e^{12},- e^{13},e^{23},e^{14}+e^{25})
\end{align*}
are not equivalent (in fact, they are not even isomorphic, see~\cite{Magnin}). The string \texttt{6431:2a} refers to the name given to the nice Lie algebra in the classification of~\cite{ContiRossi:Construction}, where the part before the colon represents the dimensions in the lower central series and the number after the colon is a progressive number, possibly followed by a letter to identify inequivalent nice Lie algebras associated to the same diagram.
\item A nice diagram can in fact be associated to infinitely many Lie algebras, consider e.g. the Lie algebra \texttt{754321:9}
\[(0,0,(1-\lambda) e^{12},e^{13},\lambda e^{14}+e^{23},e^{24}+e^{15},e^{34}+e^{25}+e^{16}), \quad \lambda\neq0,1,\]
corresponding to the one-parameter family of nilpotent Lie algebras $123457I$ in the classification of~\cite{Gong}.
\end{itemize}

It is also worth pointing out that not all nilpotent Lie algebras admit a nice basis. The number of nice nilpotent Lie algebras taken up to equivalence by dimension is summarized in Table~\ref{table:classification} together with the number of nilpotent Lie algebras taken up to isomorphism, making evident the nonrestrictiveness of the nice condition. The semiinteger entry in dimension $7$ reflects the fact that one of the families appearing in the classification of~\cite{Gong} has a nice basis only for positive values of the real parameter; the question marks in higher dimensions reflects the lack of a classification for nilpotent Lie algebras beyond dimension $7$ and the fact that, lacking such a classification, finding whether two nice Lie algebras are isomorphic is a nontrivial problem.

\begin{table}[thp]
\centering
{\caption{\label{table:classification} Number of nilpotent Lie algebras (NLA) and nice nilpotent Lie algebras by dimension, according to the classifications of \cite{Gong} and \cite{ContiRossi:Construction}.}
\begin{tabular}{CCCC}
\toprule
\text{dim} & \text{NLA} & \text{NLA admitting nice basis} & \text{nice NLA}\\
\midrule
 3&2&2&2\\
 4&3&3&3\\
 5&9&9&9\\
 6&34&33&36\\
 7&175 + 9 \text{ families } & 141 +4\frac12 \text{ families}&152 +4\frac12 \text{ families} \\
 8&?&? &917+45 \text{  families } \\
 9&?&? &6386  +501  \text{  families } \\
\bottomrule
\end{tabular}
}
\end{table}

\FloatBarrier

\section{$\sigma$-diagonal metrics}
\label{sec:sigma}
In this section we consider left-invariant pseudoriemannian metrics on a nilpotent Lie group; these will be expressed as metrics (i.e. indefinite scalar products) on the corresponding Lie algebra. We give a formula for the curvature of the associated Levi-Civita connection. We introduce a particular class of Ricci-flat metrics and produce sufficient conditions to prove that they are not flat.

Let $\mcB=\{e_1, \ldots, e_n\}$ be a basis of a Lie algebra $\mg$ and let $\sigma$ be an order two permutation of $\mcB$; we will write $e_{\sigma_i}$ for $\sigma(e_i)$. Having numbered the elements of $\mcB$, we shall represent $\sigma$ as a product of transpositions in $\{1, \ldots, n\}$. A \emph{$\sigma$-diagonal} metric $\bil$ on $\mg$ with respect to the basis $\mcB$ is a metric satisfying
\begin{equation}\label{eqn:sigmametric}
\lela e_i, e_{\sigma_j}\rira=g_i\delta_{ij},
\end{equation}
where $g_i$ are nonzero reals satisfying $g_{\sigma_i}=g_i$. As the $g_i$  vary, we obtain for the signature any pair $(p,q)$ such that  $\abs{p-q}$ does not exceed the number of nodes fixed by $\sigma$.

If $\mg$ is a nice nilpotent Lie algebra with diagram $\Delta$, we will consider metrics that are $\sigma$-diagonal with respect to the nice basis, so that $\sigma$ becomes an order two permutation of the set of nodes $N(\Delta)$; for fixed $\g$ and $\sigma$,
\eqref{eqn:sigmametric} defines a family of $\sigma$-diagonal metrics. To count the number of parameters describing  these families, we represent Lie algebras of dimension $n$ by elements of $\Lambda^2(\R^n)\otimes\R^n$ corresponding to the Lie bracket and declare the metric to be a fixed scalar product $\bil$  on $\R^n$ of the type~\eqref{eqn:sigmametric}. Two elements $c,c'$ of $\Lambda^2(\R^n)\otimes\R^n$ correspond to isometric Lie algebras if and only if they are related by the action of the orthogonal group $O(\bil)$; indeed, an element $g\in \GL(n,\R)$ corresponds to an isomorphism between the Lie algebras determined by $c$ and $c'$ if and only if $c'=gc$.

Recall that to a nice diagram $\Delta$ with nodes $e_1,\dotsc, e_n$ one can associate a \emph{root matrix} with $n$ columns $M_\Delta$ such that whenever $e_i\xrightarrow{e_j}e_k$ is an arrow $M_\Delta$ has a row with $-1$ in the entries $i,j$, $+1$ in the entry $k$, and zero in the others.

\begin{proposition}
\label{prop:family}
Let $(\g,\B)$ be a nice Lie algebra and let $\sigma$ be an order two permutation of $\B$ which is the product of $k$ transpositions. Let
$(\ker M_\Delta)^{-\sigma}$ be the subspace of $\ker M_\Delta$ consisting of vectors $X$ that satisfy $\sigma(X)=-X$.

Then $\sigma$-diagonal metrics on $(\g,\B)$ form  a family of nonisometric metric Lie algebras depending on
\[\operatorname{rank} M_\Delta-
k+ \dim (\ker M_\Delta)^{-\sigma}\]
parameters.
\end{proposition}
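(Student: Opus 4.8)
The plan is to parametrize $\sigma$-diagonal metric Lie algebras as an orbit space and compute its dimension by the orbit–stabilizer principle. Fix the scalar product $\bil$ of type~\eqref{eqn:sigmametric} on $\R^n$; then, as observed in the text, two brackets $c,c'\in\Lambda^2(\R^n)\otimes\R^n$ give isometric metric Lie algebras precisely when $c'=gc$ for some $g\in O(\bil)$. So the $\sigma$-diagonal metrics on $(\g,\B)$ up to isometry correspond to the $O(\bil)$-orbit of the fixed bracket $c_0$ of $\g$ inside the subvariety $V_\Delta$ of brackets realizing the diagram $\Delta$. The dimension of this orbit is $\dim O(\bil)-\dim \Stab_{O(\bil)}(c_0)$, but what we actually want is the dimension of the family of \emph{nonisometric} such structures, i.e.\ the dimension of $V_\Delta$ minus the dimension of a generic $O(\bil)$-orbit — equivalently the number of essential parameters left after quotienting. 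First I would make precise that $V_\Delta$ is (an open subset of) the affine space of brackets whose structure constants vanish outside the slots prescribed by the arrows of $\Delta$; its dimension equals the number of arrows of $\Delta$ up to the antisymmetry identification, but more usefully the relevant count is organized by the root matrix.

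The key point is to reduce everything to the torus. Componentwise rescaling $e_i\mapsto t_i e_i$ acts on the structure constant $c_{ij}^k$ (for an arrow $e_i\xrightarrow{e_j}e_k$) by multiplication by $t_i t_j t_i^{-1} \cdots$ — precisely, by $\prod t_\ell^{(M_\Delta)_{r\ell}}$ where $r$ is the row of $M_\Delta$ for that arrow. Hence the diagonal subgroup of $\GL(n,\R)$ acts on the space of brackets-with-diagram-$\Delta$ through the linear map $M_\Delta$, and the orbit of $c_0$ under this torus has dimension $\operatorname{rank} M_\Delta$. Thus, modulo diagonal rescalings, $V_\Delta$ has codimension $\operatorname{rank}M_\Delta$, i.e.\ the number of genuinely distinct nice Lie algebras with diagram $\Delta$ (as brackets) is a family of $(\#\text{arrows}) - \operatorname{rank}M_\Delta$ parameters — but here we do not vary $\g$, we vary the metric, so I must instead run the argument the other way: fix $c_0$, and let the metric vary over all type-\eqref{eqn:sigmametric} scalar products, which is the same as fixing $\bil$ and letting $g$ range over $\GL(n,\R)$ acting on $c_0$, then intersecting the orbit with $V_\Delta$ and quotienting by $O(\bil)$.

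So the core computation is: the set of $g\in\GL(n,\R)$ with $g\cdot c_0\in V_\Delta$ is, up to the stabilizer of $c_0$ in $\GL(n,\R)$, generated by the diagonal torus $T$ (this is where the nice hypothesis is used: a nice basis is rigid enough that any change of basis preserving the diagram is, modulo automorphisms, diagonal — at least to first order / on the relevant component). The diagonal torus $T\cong(\R^*)^n$ maps $c_0$ to a family of dimension $\operatorname{rank}M_\Delta$; among these, two are isometric iff related by an element of $O(\bil)\cap(\text{diagonal}) $, which for a $\sigma$-diagonal $\bil$ is exactly the subgroup of $T$ preserving~\eqref{eqn:sigmametric}. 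That subgroup consists of $\diag(t_1,\dots,t_n)$ with $t_i t_{\sigma_i}=1$; its Lie algebra is $\{X\in\R^n : X+\sigma(X)=0\}$, of dimension $n-k$, but we only get to use the part of it that actually acts nontrivially on $c_0$, i.e.\ that survives in $\R^n/\ker M_\Delta$. Intersecting: the isometry directions inside $T$ that move $c_0$ form a space of dimension $(n-k) - \dim\big((\ker M_\Delta)\cap \{X+\sigma(X)=0\}\big) = (n-k) - \dim(\ker M_\Delta)^{-\sigma}$. Therefore the number of remaining parameters is
\[
\operatorname{rank}M_\Delta \;-\;\Big[(n-k)-\dim(\ker M_\Delta)^{-\sigma}\Big]\;+\;\big(n - \operatorname{rank}M_\Delta\big)?
\]
— no: I must be careful. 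The clean bookkeeping is $\dim(\text{family}) = \dim(T\text{-orbit of }c_0) - \dim(\text{isometries in }T\text{ moving }c_0) = \operatorname{rank}M_\Delta - (n-k) + \dim(\ker M_\Delta)^{-\sigma}$, which is exactly the claimed formula $\operatorname{rank}M_\Delta - k + \dim(\ker M_\Delta)^{-\sigma}$ once one notes the orbit and stabilizer are taken inside the $n$-dimensional torus so the two ``$n$''s and the $\operatorname{rank}$ cancel correctly. I would write this as: $\dim T = n$; the subgroup acting trivially on $c_0$ has Lie algebra $\ker M_\Delta$ (dimension $n-\operatorname{rank}M_\Delta$); the subgroup lying in $O(\bil)$ has Lie algebra $\{X+\sigma X=0\}$ (dimension $n-k$); so the effective rescalings modulo isometric ones have dimension $(n - (n-\operatorname{rank}M_\Delta)) - ((n-k) - \dim(\ker M_\Delta)^{-\sigma}) = \operatorname{rank}M_\Delta - (n-k) + \dim(\ker M_\Delta)^{-\sigma}$. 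Hmm, that gives $\operatorname{rank}M_\Delta - n + k + \dim(\ker M_\Delta)^{-\sigma}$, off by $n-2k$ from the claim — so in fact the transpositions must be counted so that the stabilizer-in-$O(\bil)$ direction has dimension $k$, not $n-k$: indeed $\{X : X+\sigma(X)=0\}$ has dimension equal to the number of transpositions $k$ (one free direction per swapped pair, the fixed coordinates forced to $0$). With that correction the arithmetic closes: $\operatorname{rank}M_\Delta - k + \dim(\ker M_\Delta)^{-\sigma}$.

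\textbf{Main obstacle.} The genuine difficulty is the rigidity claim: that every isometry between two $\sigma$-diagonal metric structures on $(\g,\B)$ can be reduced, via a Lie algebra automorphism preserving the diagram, to a diagonal transformation — in other words that the relevant component of $\{g : g c_0 \in V_\Delta\}$ is $\Stab_{\GL}(c_0)\cdot T$. This is what legitimizes reducing the whole $O(\bil)$-action to the diagonal torus. For nice diagrams this should follow from condition~\ref{enum:condNice1}–\ref{enum:condNice2} (distinct arrows with common source/target have distinct labels, so the bracket components occupy ``independent coordinates'' and a linear map preserving all of them must be monomial), but making the argument airtight — handling the difference between the full stabilizer and its identity component, and checking the dimension count is insensitive to these finite ambiguities — is the delicate part, and I would expect the authors to either invoke a structural lemma about nice bases or argue directly at the level of tangent spaces (i.e.\ compute $\dim$ of the orbit by differentiating the $O(\bil)$-action at $c_0$ and identifying the image with $M_\Delta(\R^n)$ modulo the $\sigma$-antiinvariant diagonal directions), which sidesteps the global rigidity issue entirely and is probably the cleanest route.
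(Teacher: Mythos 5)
Your proposal is correct and takes essentially the same route as the paper: the $\sigma$-diagonal structures form the $D_n$-orbit of the fixed bracket, of dimension $\operatorname{rank}M_\Delta$, and isometric identifications are accounted for by the $D_n\cap O(\bil)$-orbit, of dimension $k-\dim(\ker M_\Delta)^{-\sigma}$ (your corrected count of $k$, not $n-k$, for the diagonal part of $O(\bil)$ is the right one). The step you flag as the main obstacle --- that the $O(\bil)$-orbit meets $V_\Delta$ with the same tangent space as the orbit of its diagonal subgroup --- is precisely the point the paper settles by appealing to the argument of \cite[Theorem 3.6]{ContiRossi:Construction}, i.e.\ by the infinitesimal (tangent-space) computation you yourself propose as the cleanest route.
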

\begin{proof}
The family of metric Lie algebras corresponding to $\sigma$-diagonal metrics on $\g$, as the parameters $g_i$ vary, form a $D_n$-orbit in $(\Lambda^2\R^n)\otimes\R^n$, where $D_n$ is the Lie group of nonsingular diagonal matrices. The stabilizer for this action can be identified with $\ker M_\Delta$; thus, the $D_n$-orbit is a submanifold of \mbox{$V_\Delta=\Span{e^{ij}\otimes e_k\mid i\xrightarrow{j}k \text{ is an arrow}}$} with dimension equal to $\operatorname{rank} M_\Delta$.

Two elements determine isometric metric Lie algebras if they are in the same $O(\langle,\rangle)$-orbit. By the same argument used in the proof of \cite[Theorem 3.6]{ContiRossi:Construction}, the
$O(\langle,\rangle)$-orbit intersected with $V_\Delta$  has the same tangent space as the $D_n\cap O(\langle,\rangle)$-orbit.

The Lie algebra of $D_n\cap O(\langle,\rangle)$ has dimension $k$; it can be identified with the set of elements $X\in\R^n$ such that $\sigma(X)=-X$ by taking the diagonal matrices with the same entries, and the stabilizer with $(\ker M_\Delta)^{-\sigma}$. Thus, the $D_n\cap O(\langle,\rangle)$-orbit has dimension $k-\dim (\ker M_\Delta)^{-\sigma}$.
\end{proof}

Notice that unlike in~\cite{ContiRossi:EinsteinNice,ContiRossi:Construction,ContiRossi:RicciFlat}, we do not require $\sigma$ to be an automorphism of the diagram associated to the nice Lie algebra, i.e. to map arrows to arrows; in fact, the relevant condition for this paper implies that the image of an arrow is never an arrow:
\begin{definition}\label{def:arrowbreaking}
Given a nice diagram $\Delta$, a permutation of its set of nodes will be called an \emph{arrow-breaking involution} if it has order two and:
\begin{enumerate}
 \item whenever $x$ has an incoming arrow with label $y$, then $\sigma(x)$ does not have an incoming arrow with label $\sigma(y)$;
 \item whenever $x$ has an outgoing arrow with label $y$, then $\sigma(x)$ does not have an outgoing arrow with label $\sigma(y)$.
\end{enumerate}
\end{definition}

\begin{proposition}
\label{prop:2OrderSigmaEnhancedGiveRicciFlat}
Let $\g$ be a nice nilpotent Lie algebra with diagram $\Delta$, and let $\sigma$ be an arrow-breaking involution.
Then any $\sigma$-diagonal metric~\eqref{eqn:sigmametric} is Ricci-flat.
\end{proposition}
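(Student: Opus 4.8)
The plan is to write out the Ricci tensor of a $\sigma$-diagonal metric in terms of the structure constants $c_{ijk}$ of the nice basis and show that every entry vanishes because of the arrow-breaking condition. Since the metric is $\sigma$-diagonal, the natural "dual" basis for raising indices is $f^i = g_i^{-1} e^{\sigma_i}$, so that $\langle e_i, f^j\rangle = \delta_{ij}$; I would set up all Koszul-formula computations in this biorthogonal frame. For a nilpotent Lie algebra the Levi-Civita connection is $\nabla_X Y = \tfrac12[X,Y] + U(X,Y)$ with $U$ determined by $2\langle U(X,Y),Z\rangle = \langle[Z,X],Y\rangle + \langle X,[Z,Y]\rangle$; the key structural point is that in a nice basis each bracket $[e_i,e_j]$ is a multiple of a single $e_k$, so $\nabla_{e_i}e_j$ has at most three nonzero components, indexed by the (at most) three arrows incident to the pair $\{i,j\}$. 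I would record the explicit formula for $\langle \nabla_{e_i} e_j, f^k\rangle$ as a sum of three terms, each a product of a structure constant with a ratio of the $g$'s, nonzero only when $i\xrightarrow{j}k$, or $k\xrightarrow{j}i$, or $k \xrightarrow{i} j$ is an arrow of $\Delta$ (up to the symmetry~\ref{enum:condNice3}).

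Next I would compute $\ric(e_a,e_b) = \sum_i \langle R(e_i,e_a)e_b, f^i\rangle$ using the standard curvature formula $R(X,Y) = [\nabla_X,\nabla_Y] - \nabla_{[X,Y]}$. Expanding in the nice basis, each term in $\ric(e_a,e_b)$ is a sum of products of two structure constants and a ratio of $g$'s, and each such product is nonzero only when a specific pair of arrows sharing nodes is present in $\Delta$. The crucial observation — and this is where the definition of $\sigma$-diagonal enters in an essential way together with arrow-breaking — is the following: by Proposition~\ref{prop:family}'s setup, a term $c_{\bullet}c_{\bullet}\,g_{\bullet}/g_{\bullet}$ appearing in $\ric(e_a,e_b)$ records, via its $g$-ratio, a pair of arrows of $\Delta$ \emph{together with} the $\sigma$-image of one of their nodes; arrow-breaking forces that $\sigma$-image configuration not to be an arrow, hence the companion structure constant that would multiply it is zero. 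Concretely I expect each diagonal contribution $\ric(e_i,e_i)$ to be a sum over arrows through $e_i$ of terms $\pm c^2 g_j/g_k$, and each term to be paired off (or individually killed) by arrow-breaking applied to $\sigma(i)$; the off-diagonal entries $\ric(e_a,e_b)$ with $a\neq b$ vanish by a parallel but easier bookkeeping, since a nonzero such entry would require either $a,b$ or $\sigma_a,\sigma_b$ to be simultaneously joined by arrows in a way the two arrow-breaking clauses forbid.

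I would organize the bookkeeping by grouping the six or so types of two-arrow configurations that can contribute to $R(e_i,e_a)e_b$ (two arrows sharing a source, sharing a target, head-to-tail in two ways, etc.), and for each type exhibit the $g$-ratio it carries and the $\sigma$-image pattern that arrow-breaking rules out. Condition~\ref{enum:condNice4} (the Jacobi-type constraint on nice diagrams) is what guarantees the configurations come in matched pairs where one survives only if the other does, so that "killing one kills the contribution"; I would flag the place where~\ref{enum:condNice4} is invoked. The main obstacle, and the part deserving the most care, is precisely this combinatorial matching: one must be sure that \emph{every} monomial in every entry of $\ric$ is accounted for — no term should survive because its arrow-configuration happens not to involve a $\sigma$-image, and no term should be double-counted. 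I would handle this by writing $\ric$ as an explicit finite sum indexed by pairs of arrows of $\Delta$, checking that each index contributes a factor of the form $c_{xyz}$ with $\{x,y,z\}$ a $\sigma$-translate of nodes joined by the other arrow, and then quoting Definition~\ref{def:arrowbreaking} to conclude that factor is zero. Once the sum is written down in this indexed form the vanishing is immediate; the work is entirely in setting up the indexing cleanly and verifying it is exhaustive.
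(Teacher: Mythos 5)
Your guiding mechanism --- every monomial of $\ric$ should carry a structure constant whose indices are a $\sigma$-translate of the nodes of some arrow, and Definition~\ref{def:arrowbreaking} then forces that constant to vanish --- is indeed why the statement is true, but as written the proposal has a genuine gap: the exhaustive indexing of the Ricci monomials, which you yourself identify as the entire content of the argument, is only announced, never carried out, and two of the claims you would build it on are wrong. First, condition~\ref{enum:condNice4} plays no role here: no cancellation between ``matched pairs'' of configurations is needed, because under the arrow-breaking hypothesis each monomial vanishes individually (one of its two structure constants is attached to a $\sigma$-image arrow that does not exist). Organizing the bookkeeping around a pairing supplied by~\ref{enum:condNice4} would send you hunting for cancellations that are neither available in general nor necessary. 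Second, the expectation that $\ric(e_i,e_i)$ is a sum of terms $\pm c^2 g_j/g_k$ is the picture for a genuinely \emph{diagonal} metric; for a $\sigma$-diagonal metric~\eqref{eqn:sigmametric} the pairing couples the slot of an arrow with the $\sigma$-image slot, so the monomials are products of two \emph{different} structure constants of the type $c_{ijk}\,c_{\sigma_i h\,\sigma_k}$ --- which is precisely why they die. This contradicts your own later (correct) description and shows the indexing scheme has not actually been set up; until it is, the proof is a promissory note.

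The paper bypasses the Koszul and curvature expansion entirely: by~\eqref{eqn:ricddadad}, $\ric(v,w)=\tfrac12\langle dv^\flat,dw^\flat\rangle-\tfrac12\langle \ad v,\ad w\rangle$, so it suffices to show that the induced metrics vanish identically on the two subspaces $\ad\g\subset\g^*\otimes\g$ and $d\g^*\subset\Lambda^2\g^*$. For the metric~\eqref{eqn:sigmametric} one has $\langle e^i\otimes e_k,e^r\otimes e_p\rangle=\delta_{\sigma_i r}\delta_{\sigma_k p}\,g_k/g_i$, so an element $e^i\otimes e_k$ coming from an arrow $e_i\xrightarrow{e_j}e_k$ pairs nontrivially with $\ad\g$ only if some arrow into $e_{\sigma_k}$ carries the label $e_{\sigma_i}$ (use~\ref{enum:condNice3}), which the incoming-arrow clause of Definition~\ref{def:arrowbreaking} forbids; similarly $\langle e^{ij},e^{rp}\rangle\neq0$ forces $\{r,p\}=\{\sigma_i,\sigma_j\}$, so $e^{ij}$ pairs nontrivially with $d\g^*$ only if $e_{\sigma_i}\xrightarrow{e_{\sigma_j}}\bullet$ is an arrow, which the outgoing-arrow clause forbids. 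If you want to keep your route, the clean fix is to first quote (or derive) \eqref{eqn:ricddadad} and then run your $\sigma$-translation argument on these two bilinear forms: the exhaustiveness worry disappears, since those are the only monomials there are, and neither~\ref{enum:condNice4} nor any cancellation is ever invoked.
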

\begin{proof}
By~\cite{ContiRossi:EinsteinNice}, we have
\begin{equation}
 \label{eqn:ricddadad}
\ric(v,w) = \frac12\langle dv^\flat, dw^\flat\rangle-\frac12\langle \ad v,\ad w\rangle,
\end{equation}
where $v^\flat=\langle v,\cdot\rangle$. It therefore suffices to show that the metric restricts to zero on the spaces
\[\ad \g=\{\ad x\st x\in\g\}, \quad d\g^*=\{d\alpha\st \alpha\in\g^*\}.\]
Let $\{e_1, \ldots,e_n\}$ be a nice basis of $\mg$; by the nice condition, $\ad\g$ is spanned by the elements $e^i\otimes e_k$ such that $e_i\xrightarrow{e_j}e_k$ is an arrow in $\Delta$.

Assume therefore that $e_i\xrightarrow{e_j}e_k$ is an arrow in $\Delta$. By the form of the metric,
\[\langle e^i\otimes e_k ,e^r\otimes e_p\rangle = \delta_{\sigma_ir }\delta_{\sigma_kp}g_k/g_i.\]
Thus, $e^i\otimes e_k$ is orthogonal to $\ad\g$ unless $e^{\sigma_i}\otimes e_{\sigma_k}$ also belongs to $\ad\g$, i.e.
\[e_{\sigma_i}\xrightarrow{e_h}e_{\sigma_k}\]
is an arrow for some $h$.  Similarly, we have
\[d\g^*\subset \Span{e^{ij}\mid e_i\xrightarrow{e_j}e_k},\]
where
\[\langle e^{ij},e^{rp}\rangle =\det
 \begin{pmatrix} \delta_{\sigma_i r}/(g_ig_r) & \delta_{\sigma_i p}/(g_ig_p)\\ \delta_{\sigma_j r}/(g_jg_r) & \delta_{\sigma_j p}/(g_jg_p)\end{pmatrix}.
\]
Suppose $e_i\xrightarrow{e_j}e_k$ is an arrow in $\Delta$. Then $e^{ij}$ is orthogonal to $d\g^*$ unless $\sigma_i\xrightarrow{\sigma_j} h$ is an arrow in $\Delta$ for some $h$; this is absurd.
\end{proof}
\begin{remark}
 \label{remark:structureconstantsdonotmatter}
We point out that the arrow-breaking condition only depends on the underlying diagram of a Lie algebra, rather than the Lie algebra. This means that the actual structure constants do not play any role, as long as we only consider Ricci-flat metrics of the particular type~\eqref{eqn:sigmametric}.
\end{remark}

\begin{example} Consider the nice Lie algebra
\begin{equation*}
\texttt{52:1}\quad(0,0,0,e^{12},e^{13}).
\end{equation*}
It is easy to check that the order two permutation $\sigma= (3\, 4)(2\, 5)$ is  arrow-breaking. Therefore, the metric
\begin{equation}\label{eq:metex1}
\langle e_1,e_1\rangle = g_1,\quad \langle e_2,e_5\rangle=g_2,\quad \langle e_3,e_4\rangle= g_3
\end{equation}
is Ricci-flat for any choice of the parameters $g_i$ by Proposition~\ref{prop:2OrderSigmaEnhancedGiveRicciFlat}.

In addition, the root matrix $M_\Delta$ has rank two, $k=2$ and $(\ker M_\Delta)^{-\sigma}$ is spanned by $ (0,1,-1,1,-1)$.  Proposition~\ref{prop:family} implies that \eqref{eq:metex1} gives a family of nonisometric Ricci-flat metric Lie algebras depending on $2-2+1=1$ parameter.
\end{example}


Recall from~\cite{AlekseevskiKimelFel} that, in the  Riemannian case, homogeneous Ricci-flat ma\-nifolds are necessarily flat. In the pseudoriemannian context, this is not true; therefore, we are interested in determining whether a Ricci-flat metric is flat or not. To this end, we generalize to our less restrictive setting a formula for the Riemann tensor of a metric Lie algebra proved by Boucetta~\cite{Boucetta} in the two-step nilpotent case. For this computation, we do not assume that the Lie algebra is nice or nilpotent.

Let $\mg$ be a Lie algebra and let $\bil$ be a metric on it. Fix a  basis $\{e_1, \ldots, e_p\}$ of the commutator $\mg'=[\mg,\mg]$ of $\mg$, and consider a linearly independent set $\{x_1, \ldots, x_p\}\subset \mg$ such that
\[\lela e_i,x_j\rira=\delta_{ij};\]
for instance, the $x_i$ can be constructed by completing $\{e_1, \ldots, e_p\}$ to a basis of $\g$ and taking the metric duals of the dual basis.

For $i=1, \ldots,p$, define the endomorphisms $J_i:\mg\to \mg$ by
\begin{equation}
\label{eqn:defJi}
\lela J_iu,v\rira=\lela [u,v],x_i\rira, \quad \mbox{ for } u,v\in \mg.
\end{equation}
Notice that $J_i$ is a skew-symmetric endomorphism of $(\mg,\bil)$. Moreover, if $\mz$ denotes the center of $\mg$, that is, $\mz=\{z\in \mg\st [x,z]=0 \mbox{  for all }x\in \mg\}$, then we have $J_i\mz=0$, $J_i\mg\subseteq \mz^\bot$ and $\bigcap_{i=1}^p \ker J_i=\mz$.

From~\eqref{eqn:defJi}, the Lie bracket can be written in terms of the skew-symmetric endomorphisms: for every $u,v\in \mg$,
\begin{equation}
\label{eq.bracket}
[u,v]=\sum_{i=1}^p\lela J_iu,v\rira e_i.
\end{equation}
The Levi-Civita connection of $(\mg,\bil)$ has the following expression:
\begin{equation}\label{eq:Koszul}
2\nabla_uv=\sum_{i=1}^p (\lela J_iu,v\rira e_i-\lela e_i,v\rira  J_iu-\lela e_i,u\rira J_iv), \qquad u,v\in \mg;
\end{equation}
we deduce the following for the curvature tensor
$R(u,v)=\nabla_{[u,v]}-[\nabla_u,\nabla_v]$:
\begin{proposition}
\label{prop:riemann}
Given a metric Lie algebra $\g$ and $J_i$ as above, for all  $u,v,w$ in $\mg$ the curvature tensor satisfies
\begin{multline}\label{eq:curv}
R(u,v)w=\sum_{i,j=1}^p\lela e_i,e_j\rira \left( \frac14 \lela J_i v,w \rira J_ju-\frac14 \lela J_iu,w\rira J_jv-\frac12\lela J_i  u,v\rira J_j w\right)\\
+ \frac14\sum_{i,j=1}^p\left(
\lela e_i,w\rira \lela e_j,v\rira J_j\circ J_iu-\lela e_i,w\rira\lela e_j,u\rira J_j\circ J_i v+\lela e_i,u\rira\lela e_j,v\rira [J_j,J_i]w
\right)\\
+\frac14\sum_{i,j=1}^p\left(\lela e_i,w\rira \lela [J_j,J_i]u,v\rira+\lela e_i,v\rira\lela J_ju,J_iw\rira-\lela e_i,u\rira\lela J_jv,J_iw\rira
\right)e_j\\
-\frac14[w,[u,v]]+\sum_{j=1}^p\left(
-\frac12 \lela w,e_j\rira  J_j[u,v]+\frac14\lela e_j,u\rira J_j[v,w]-\frac14 \lela v,e_j\rira J_j[u,w]\right)\\
-\frac14 \sum_{j=1}^p\left(\lela [v,e_j],w\rira+\lela [w,e_j],v\rira\right) J_ju+\frac14 \sum_{j=1}^p \left(\lela [u,e_j],w\rira +\lela [w,e_j],u\rira\right)J_jv.
\end{multline}
\end{proposition}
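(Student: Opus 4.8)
The plan is to obtain \eqref{eq:curv} by a single direct substitution: insert the Koszul formula \eqref{eq:Koszul} into $R(u,v)w=\nabla_{[u,v]}w-\nabla_u\nabla_vw+\nabla_v\nabla_uw$ and simplify, using throughout the skew-symmetry of the $J_i$, the identity \eqref{eq.bracket}, and the two consequences of \eqref{eqn:defJi} that $\langle J_ia,e_j\rangle=\langle[a,e_j],x_i\rangle$ and $\langle[a,e_j],b\rangle=\sum_k\langle J_ka,e_j\rangle\langle e_k,b\rangle$. These identities let one pass freely between expressions written in the endomorphisms $J_i$ and expressions written in terms of the bracket, which is what makes the bookkeeping manageable.

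First I would expand $2\nabla_vw$ via \eqref{eq:Koszul} as a sum of constant multiples of $e_i$, of $J_iv$, and of $J_iw$, and then apply $\nabla_u$ to each summand using \eqref{eq:Koszul} once more — the pieces needed being $\nabla_ue_i$, $\nabla_u(J_iv)$ and $\nabla_u(J_iw)$, each again given by \eqref{eq:Koszul}. After multiplying by $4$ this produces a collection of terms of one of the shapes: a product of two pairings times $e_j$; times $J_ju$; times $J_jJ_iv$ or $J_jJ_iw$; times $J_je_i$. Repeating the computation with $u$ and $v$ interchanged gives $4\nabla_v\nabla_uw$, and in the difference the purely $e_j$-valued and several single-$J_j$ terms cancel, while the $J_jJ_i$-terms combine in pairs into commutators $[J_j,J_i]$.

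Next I would compute $\nabla_{[u,v]}w$ by writing $[u,v]=\sum_i\langle J_iu,v\rangle e_i$ from \eqref{eq.bracket} and substituting into \eqref{eq:Koszul}: the first summand collapses to $\tfrac12[[u,v],w]=-\tfrac12[w,[u,v]]$ again by \eqref{eq.bracket} (together with $\langle e_j,x_i\rangle=\delta_{ij}$), while the remaining two summands give $J_j[u,v]$- and $J_je_i$-valued terms. Assembling $\nabla_{[u,v]}w-\nabla_u\nabla_vw+\nabla_v\nabla_uw$ and regrouping — rewriting the residual $J_je_i$-terms as bracket terms of the form $[v,e_j]$, $[w,e_j]$, $J_j[v,w]$, $J_j[u,w]$ via the auxiliary identities, and collecting the various $[[u,v],w]$-type contributions (which conspire, using the Jacobi identity of $\mg$, to leave exactly the coefficient $-\tfrac14$ in front of $[w,[u,v]]$) — reproduces the five lines of \eqref{eq:curv}.

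The difficulty is entirely organizational: the expansion produces on the order of two dozen terms before simplification, and matching them to the stated formula requires care with the signs arising from skew-symmetry of the $J_i$ and from the $u\leftrightarrow v$ antisymmetrization, and with the repeated conversions between the $J$-picture and the bracket-picture. As a consistency check one may observe that, although individual intermediate terms depend on the choice of the vectors $x_i$ completing $\{e_1,\dots,e_p\}$, the right-hand side of \eqref{eq:curv} must not, since $R(u,v)w$ is intrinsic; verifying this invariance on the final expression is a useful safeguard against sign errors.
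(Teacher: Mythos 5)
Your proposal is correct and is essentially the paper's own argument: the paper offers no separate written proof, simply deducing~\eqref{eq:curv} by substituting the Koszul expression~\eqref{eq:Koszul} into $R(u,v)=\nabla_{[u,v]}-[\nabla_u,\nabla_v]$ and simplifying with the skew-symmetry of the $J_i$ and the bracket identity~\eqref{eq.bracket}, exactly as you describe. Your bookkeeping plan is sound (in particular, the Jacobi identity is indeed what produces the $-\tfrac14[w,[u,v]]$ term, and the $e_j$-valued and single-$J_j$ terms combine into lines three and five rather than cancelling outright), so the sketch carries through to the stated formula.
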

We make use of this general expression to compute the sectional curvature of a $\sigma$-diagonal metric~\eqref{eqn:sigmametric} on a Lie algebra.

Let $\mg$ be a Lie algebra and fix a basis $\{e_1, \ldots, e_p, e_{p+1},\ldots e_n\}$ such that the first $p$ elements span $\mg'$. Pick a $\sigma$-diagonal metric satisfying $\lela e_i,e_j\rira=g_i \delta_{i\sigma_j}$; thus,
\begin{equation}
\label{eqn:Jnice}
\lela J_iu,v\rira=\frac1{g_i}\lela [u,v],e_{\sigma_i}\rira,\quad \mbox{ for all } i=1, \ldots, p.
\end{equation}
Using the general formula~\eqref{eq:curv}, we obtain the following useful expression for the basis elements of the Lie algebra: for $s,t=1, \ldots, n$ we have
\begin{equation}
\label{eqn:sc}
\begin{split}
\lela R(e_s,e_t)e_s,e_t\rira &= \frac32\sum_{i=1}^pg_i  \lela J_i e_t,e_s \rira \lela J_{\sigma_i}e_s,e_t\rira
+g_s g_t
\lela J_{\sigma_t}\circ J_{\sigma_s}e_s,e_t\rira\\
&\quad-\frac12 g_sg_t
 \lela J_{\sigma_s}J_{\sigma_t}e_s,e_t\rira
-\frac14\left(
g_t^2\lela J_{\sigma_t}^2e_s,e_s\rira+
 g_s^2\lela J_{\sigma_s}^2 e_t,e_t\rira
\right)\\
&\quad-\frac14\lela[e_s,[e_s,e_t]],e_t\rira+\frac34  \lela [[e_t,e_s],e_t],e_s\rira\\
&\quad-\frac14 \sum_{j=1}^p\left(\lela [e_t,e_j],e_s\rira+\lela [e_s,e_j],e_t\rira\right) \lela J_je_s,e_t\rira.
\end{split}
\end{equation}
Recall that the sectional curvature of a pseudoriemannian metric is defined on non-degenerate planes. Nevertheless, as in the Riemannian case, the metric $(\mg,\bil)$ is flat if and only if for  every $s,t=1,\ldots,n$ one has $\lela R(e_s,e_t)e_s,e_t\rira=0$ (see~\cite[Chapter 3]{oneill:semiriemannian}).

Assume now that $\mg$ is a nice nilpotent Lie algebra and $\mcB=\{e_1, \ldots, e_n\}$ as above is a nice basis. Given $e_s,e_t\in \mcB$, the nice condition implies that there  exist
$\lambda_1\in \R$ and $e_{k_1}\in \mcB\cup \{e_{\infty}\}$ such that
\[
[e_s,e_t]=\lambda_1 e_{k_1},
\]
with the convention that $k_1=\infty$  and $\lambda_1=0$ if $e_s,e_t$ commute. Similarly,
there  exist $\lambda_i,\mu_j\in \R$ with $
i=2, \ldots, 5$, $j=1,2$ and $e_{k_i}\in \mcB\cup \{e_\infty\}$, $i=2 ,\ldots,5$ such that
\begin{equation}\label{eqn:ks}
\begin{gathered}
[e_s,e_{k_2}]=\lambda_2 e_{\sigma_t},\quad [e_t,e_{k_3}]=\lambda_3 e_{\sigma_s},\\
[e_s,e_{k_4}]=\mu_1e_{\sigma_s},\quad [e_t,e_{k_5}]=\mu_2e_{\sigma_t}.
\end{gathered}
\end{equation}
It will be understood that $\sigma_\infty=\infty$ and $\delta_{ij}=0$ when either $i$ or $j$ is $\infty$. The endomorphisms $J_i$ have an explicit formula in this case, by using~\eqref{eqn:Jnice}, so we obtain
\begin{align*}
\lela J_{r} e_s,e_t\rira&=\frac{\lambda_1}{g_r} \delta_{r,k_1}, \\
\lela J_{\sigma_t}J_{\sigma_s}e_s,e_t\rira &=-\frac{\mu_1\mu_2}{g_{k_4}} \delta_{k_5 ,\sigma_{k_4}},&
\lela J_{\sigma_s}J_{\sigma_t}e_s,e_t\rira &=-\frac{\lambda_2\lambda_3}{g_{k_2}}\delta_{k_3,\sigma_{k_2}}, \\
\lela J_{\sigma_t}^2e_s,e_s\rira&=-\frac{\lambda_2^2}{g_{k_2}} \delta_{k_2,\sigma_{k_2}},
& \lela J_{\sigma_s}^2e_t,e_t\rira&=-\frac{\lambda_3^2}{g_{k_3}}\delta_{k_3,\sigma_{k_3}}, \\
\lela [e_s,[e_s,e_t]],e_t\rira&=\lambda_1\lambda_2g_t\delta_{k_1,k_2}, &
\lela [[e_t,e_s],e_t],e_s\rira&=\lambda_1\lambda_3g_s\delta_{k_1,k_3}, \\
\lela [e_t,e_j],e_s\rira&=g_s\lambda_3\delta_{j,k_3}, &
\lela [e_s,e_j],e_t\rira&=g_t\lambda_2\delta_{j,k_2},\\
\lela J_{k_3}e_s,e_t\rira&=\lambda_1 \delta_{k_1,k_3}, &
\lela J_{k_2}e_s,e_t\rira&= \lambda_1\delta_{k_1,k_2}.
\end{align*}
Therefore,~\eqref{eqn:sc} becomes
\begin{equation}\label{eqn:scnice}
\begin{aligned}
\lela R(e_s,e_t)e_s,e_t\rira
&=-\frac{3}{ g_{k_1}}\lambda_1^2\delta_{k_1,\sigma_{k_1}}
+\frac14\lambda_2^2\frac{g_t^2}{g_{k_2}}\delta_{k_2,\sigma_{k_2}}
+\frac14\lambda_3^2 \frac{g_s^2}{g_{k_3}}\delta_{k_3,\sigma_{k_3}}\\
&\quad
-\frac12\lambda_1\lambda_2g_t\delta_{k_1,k_2}
+\frac12 \lambda_1\lambda_3g_s \delta_{k_1,k_3}
\\
&\quad+\frac12 \lambda_2\lambda_3\frac{g_sg_t}{g_{k_2}}\delta_{k_3,\sigma_{k_2}}
-\mu_1\mu_2 \frac{g_s g_t}{g_{k_4}}\delta_{k_5 ,\sigma_{k_4}}.
\end{aligned}
\end{equation}
We deduce straight from~\eqref{eqn:scnice}:

\begin{proposition}\label{pr:CriteriaFlatness}
Let $\mg$ be a nice nilpotent Lie algebra with nice basis $\mcB=\{e_1, \ldots, e_n\}$ and let $\sigma$ be an order two permutation of the basis. Suppose that for some $e_s,e_t\in \mcB$, there is no $k$ verifying both $e_s\xrightarrow{e_k}e_{\sigma_s}$ and $e_t\xrightarrow{e_{\sigma_{k}}}e_{\sigma_t}$. If at least one of the following conditions holds, then every $\sigma$-diagonal metric~\eqref{eqn:sigmametric} is nonflat:
\begin{enumerate}[label=(C\arabic*)]
\item\label{enum:CriteriaFlat1} $e_s\xrightarrow{e_t}e_{k_1}$ with $k_1$ fixed by $\sigma$, and there are no arrows of the form $e_t\xrightarrow{\bullet} e_{\sigma_s}$ or $e_s\xrightarrow{\bullet} e_{\sigma_t}$; 
\item\label{enum:CriteriaFlat2} $e_s\xrightarrow{k_2}e_{\sigma_t}$ with $k_2$  fixed by $\sigma$, and there are no arrows of the form $e_s\xrightarrow{e_t}\bullet$ or $e_t\xrightarrow{\bullet} e_{\sigma_s}$. 
\end{enumerate}
\end{proposition}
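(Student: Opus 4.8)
The plan is a direct inspection of the sectional-curvature formula~\eqref{eqn:scnice}. By the criterion recorded just after~\eqref{eqn:sc} (see~\cite[Chapter~3]{oneill:semiriemannian}), it is enough to exhibit a single pair $e_s,e_t$ with $\lela R(e_s,e_t)e_s,e_t\rira\neq0$; I take the $e_s,e_t$ furnished by the hypothesis. Formula~\eqref{eqn:scnice} is a sum of seven terms, each a Kronecker delta times a product of the structure constants $\lambda_i,\mu_j$ of~\eqref{eqn:ks} and of the $g_i$ or $g_i^{-1}$; the idea is to check that under the assumptions exactly one term survives and that it is manifestly nonzero for every admissible choice of the $g_i$, which are nonzero by the definition of a $\sigma$-diagonal metric.

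First I would dispose of the last term $-\mu_1\mu_2\,g_sg_t g_{k_4}^{-1}\delta_{k_5,\sigma_{k_4}}$ using the standing hypothesis: $\mu_1\neq0$ means $e_s\xrightarrow{e_{k_4}}e_{\sigma_s}$ is an arrow, $\mu_2\neq0$ means $e_t\xrightarrow{e_{k_5}}e_{\sigma_t}$ is an arrow, and $\delta_{k_5,\sigma_{k_4}}=1$ means $e_{k_5}=\sigma(e_{k_4})$, so this term is nonzero precisely when some $k=k_4$ satisfies both $e_s\xrightarrow{e_k}e_{\sigma_s}$ and $e_t\xrightarrow{e_{\sigma_k}}e_{\sigma_t}$, contrary to assumption; hence~\eqref{eqn:scnice} reduces to its first six terms. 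Under~\ref{enum:CriteriaFlat1}, the arrow $e_s\xrightarrow{e_t}e_{k_1}$ gives $\lambda_1\neq0$ and $k_1\neq\infty$, ``$k_1$ fixed by $\sigma$'' gives $\delta_{k_1,\sigma_{k_1}}=1$, while the absence of arrows $e_s\xrightarrow{\bullet}e_{\sigma_t}$ and $e_t\xrightarrow{\bullet}e_{\sigma_s}$ forces $\lambda_2=\lambda_3=0$ (since $k_2,k_3$ are by definition the indices, if any, with $[e_s,e_{k_2}]=\lambda_2 e_{\sigma_t}$ and $[e_t,e_{k_3}]=\lambda_3 e_{\sigma_s}$); every term but the first then vanishes, and $\lela R(e_s,e_t)e_s,e_t\rira=-3\lambda_1^2/g_{k_1}\neq0$. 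Under~\ref{enum:CriteriaFlat2}, symmetrically, $e_s\xrightarrow{e_{k_2}}e_{\sigma_t}$ gives $\lambda_2\neq0$, ``$k_2$ fixed by $\sigma$'' gives $\delta_{k_2,\sigma_{k_2}}=1$, the absence of an arrow $e_s\xrightarrow{e_t}\bullet$ gives $\lambda_1=0$ (so $k_1=\infty$), and the absence of arrows $e_t\xrightarrow{\bullet}e_{\sigma_s}$ gives $\lambda_3=0$; every term but the second vanishes, and $\lela R(e_s,e_t)e_s,e_t\rira=\tfrac14\lambda_2^2 g_t^2/g_{k_2}\neq0$.

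I do not anticipate a genuine obstacle: once~\eqref{eqn:scnice} is in hand, the argument is pure bookkeeping, and the only point demanding care is to translate each ``there are no arrows of the form~$\dots$'' hypothesis into the vanishing of the correct structure constant. This uses that a labeled diagram has at most one arrow with a prescribed (ordered) pair of endpoints, so that $k_1,\dots,k_5$ are well defined, together with the conventions $\sigma_\infty=\infty$ and $\delta_{ij}=0$ when $i$ or $j$ equals $\infty$, which make the terms indexed by a missing arrow drop out automatically.
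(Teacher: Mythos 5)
Your proposal is correct and follows exactly the paper's route: the paper introduces Proposition~\ref{pr:CriteriaFlatness} with the words ``We deduce straight from~\eqref{eqn:scnice}'', and your term-by-term bookkeeping (killing the $\mu_1\mu_2$ term via the standing hypothesis, then isolating the $\lambda_1^2$ term under~\ref{enum:CriteriaFlat1} and the $\lambda_2^2$ term under~\ref{enum:CriteriaFlat2}) is precisely that deduction, spelled out. Your handling of the $\infty$ conventions and of the uniqueness of $k_1,\dotsc,k_5$ is also consistent with the paper's setup, so nothing is missing.
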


\begin{remark}
Recall that the arrow-breaking condition only depends on the underlying diagram of a Lie algebra, rather than the Lie algebra (see Remark \ref{remark:structureconstantsdonotmatter}).
The above criteria for non-flatness are also independent of the structure constants.
Nevertheless, the full curvature tensor of a metric induced by an arrow-breaking permutation depends on the structure constants.
\end{remark}

\begin{remark}
\label{remark:flat}
Given an arrow-breaking $\sigma$, all the $\sigma$-diagonal metrics as in~\eqref{eqn:sigmametric}
are Ricci-flat, regardless of the parameters $g_i$ (Proposition~\ref{prop:2OrderSigmaEnhancedGiveRicciFlat}); however, the full curvature tensor may or may not depend on the parameters.
For instance, the arrow-breaking involution $\sigma= (3\, 4)(2\, 5)(1\, 6)$ of the Lie algebra
\[\texttt{64321:4}\quad(0,0,e^{12},e^{13},e^{14}+e^{23},e^{15}+e^{24})\]
gives the metric
\[
\langle e_1,e_6\rangle = g_1,\quad \langle e_2,e_5\rangle=g_2,\quad \langle e_3,e_4\rangle= g_3.\]
 By Proposition~\ref{prop:riemann}, the curvature tensor of this metric is
\[ \frac{ g_1- g_3}{g_3} e^{12}\otimes e^1\otimes e_4+ \frac{  g_1-  g_3}{ g_2}e^{13}\otimes e^1\otimes e_5-\frac{  g_1-  g_3}{g_1}(e^{12}\otimes e^3\otimes e_6+e^{13}\otimes e^2\otimes e_6).\]
Clearly, it is flat if and only if $g_1=g_3$.

In the notation of  Proposition~\ref{prop:family}, $M_\Delta$ has kernel spanned by $(1,2,3,4,5,6)$, hence rank $5$, and $(\ker M_\Delta)^{-\sigma}=0$. So we obtain a family of nonisometric Ricci-flat metric Lie algebras with $5-3=2$ parameters, within which we find a one-parameter family of flat Lie algebras.

However, in the Lie algebra
\[\texttt{61:1}\quad(0,0,0,0,0,e^{12})\]
the involution $\sigma= (1\, 6)$ gives the $\sigma$-diagonal metric
\[
\langle e_1,e_6\rangle = g_1,\quad \langle e_2,e_2\rangle=g_2,\quad \langle e_3,e_3\rangle= g_3
,\quad \langle e_4,e_4\rangle= g_4,\quad \langle e_5,e_5\rangle= g_5.
\]
In this case, $M_\Delta$ has rank one, $k=1$ and $(\ker M_\Delta)^{-\sigma}=0$, so all these metrics are isometric. A direct computation shows that they are flat.
\end{remark}

Given a diagram $\Delta$, it will be convenient to consider the ring $\Z[{\mathrm{e}}_1,\dotsc, {\mathrm{e}}_n]$, where each indeterminate ${\mathrm{e}}_i$ is associated to the node $e_i$ of $\Delta$.
Let $P_\Delta,Q_\Delta\in\Z[{\mathrm{e}}_1,\dotsc, {\mathrm{e}}_n]$ be the polynomials
\begin{equation}\label{eqn:PQ}
P_\Delta=\prod_{\substack{e_i\xrightarrow{e_j}e_k\\ i<j}} ({\mathrm{e}}_i+{\mathrm{e}}_j), \qquad Q_\Delta=\prod_{e_i\xrightarrow{e_j}e_k} (1+{\mathrm{e}}_i-{\mathrm{e}}_k).
\end{equation}
We shall refer to the degree-one polynomials $({\mathrm{e}}_i+{\mathrm{e}}_j)$, $(1+{\mathrm{e}}_i-{\mathrm{e}}_k)$ as the linear factors of $P_\Delta Q_\Delta$.

Notice that $Q_\Delta$ does not depend on the labels of the arrows, i.e. it is associated to the underlying unlabeled diagram.
\begin{example}

Let $\mg_n$ denote the standard filiform Lie algebra of dimension $n$. Such a Lie algebra has a nice basis $\{e_1,\ldots, e_n\}$ satisfying the nonzero Lie bracket relation
\[ [e_1,e_i]=e_{i+1}, \quad i=2, \ldots, n-1.\]
The corresponding polynomials are
\[P_\Delta=\prod_{i=2}^{n-1}({\mathrm{e}}_1+{\mathrm{e}}_i), \quad Q_\Delta=\prod_{i=2}^{n-1}(1+{\mathrm{e}}_1-{\mathrm{e}}_{i+1})(1+{\mathrm{e}}_{i}-{\mathrm{e}}_{i+1}).\]

For $n\geq 5$, the standard filiform Lie algebra $\mg_n$ possesses a Ricci-flat nonflat metric. Indeed, the order two permutation defined by $\sigma(e_i)=e_{n-i+1}$, for $i=1, \ldots, n$ 
is arrow-breaking. Moreover, for $n\geq 5$,
the curvature satisfies
\[R(e_1,e_2)e_{n-3}
=\frac{1}{2g_1}(g_3+g_4)e_n,\] so it is nonzero if $g_3\neq -g_4$. 


\end{example}

There is a natural action of $\Sigma_n$, the group of permutations of $\{1, \ldots, n\}$, on $\Z[{\mathrm{e}}_1,\dotsc, {\mathrm{e}}_n]$, for which we trivially have
\[\sigma P_\Delta=P_{\sigma(\Delta)}, \quad \sigma Q_\Delta=Q_{\sigma(\Delta)}.\]

\begin{lemma}
\label{lemma:arrowbreaking}
Given a nice diagram $\Delta$ and an order two  permutation  $\sigma\colon N(\Delta)\to N(\Delta)$, the following are equivalent:
\begin{enumerate}[label=\arabic*)]
 \item\label{enum:CondLemmaArrowbreaking1} $\sigma$ is arrow-breaking;
 \item\label{enum:CondLemmaArrowbreaking2} $P_\Delta$ and $\sigma P_\Delta$ have no linear factors in common and $Q_\Delta$ and $\sigma Q_\Delta$ have no linear factor in common;
 \item\label{enum:CondLemmaArrowbreaking3} $P_\Delta$ and $Q_\Delta$ have no $\sigma$-invariant divisor of positive degree.
\end{enumerate}
\end{lemma}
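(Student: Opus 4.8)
The strategy is to prove the cycle of implications \ref{enum:CondLemmaArrowbreaking1} $\Rightarrow$ \ref{enum:CondLemmaArrowbreaking2} $\Rightarrow$ \ref{enum:CondLemmaArrowbreaking3} $\Rightarrow$ \ref{enum:CondLemmaArrowbreaking1}, using in a crucial way that $\Z[{\mathrm{e}}_1,\dotsc,{\mathrm{e}}_n]$ is a unique factorization domain in which the distinct linear factors appearing in $P_\Delta$ and $Q_\Delta$ are pairwise non-associate irreducibles. The first order of business is to unwind the bookkeeping: an outgoing arrow at a node $x$ with label $y$ corresponds (for the appropriately ordered pair) to a linear factor $({\mathrm{e}}_x+{\mathrm{e}}_y)$ of $P_\Delta$, and by condition~\ref{enum:condNice3} of a nice diagram the unordered pair $\{x,y\}$ determines this factor unambiguously; an incoming arrow at $z$ with label $y$, say $x\xrightarrow{y}z$, corresponds to a factor $(1+{\mathrm{e}}_x-{\mathrm{e}}_z)$ of $Q_\Delta$, but one must be slightly careful here since the label $y$ does not literally appear in this factor. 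The key observation making \ref{enum:CondLemmaArrowbreaking1} $\Leftrightarrow$ \ref{enum:CondLemmaArrowbreaking2} work is that, by~\ref{enum:condNice1} and~\ref{enum:condNice2}, the triple $(x,y,z)$ of an arrow is recovered from the pair (source, destination) together with either the label or the other data: indeed, once $x$ and $z$ are fixed there is at most one arrow $x\to z$ (no multiple arrows), so its label $y$ is determined; hence a factor $(1+{\mathrm{e}}_x-{\mathrm{e}}_z)$ of $Q_\Delta$ remembers the full arrow, and two such factors coincide iff the arrows have the same source and destination, hence the same label.

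\textbf{\ref{enum:CondLemmaArrowbreaking1} $\Rightarrow$ \ref{enum:CondLemmaArrowbreaking2}.} Suppose $\sigma$ is arrow-breaking. A common linear factor of $P_\Delta$ and $\sigma P_\Delta = P_{\sigma(\Delta)}$ would be an $({\mathrm{e}}_a+{\mathrm{e}}_b)$ that is a factor of $P_\Delta$, hence $\{a,b\}$ is an arrow-pair in $\Delta$ with some label, and also a factor of $P_{\sigma(\Delta)}$, meaning $\{\sigma(a),\sigma(b)\}$ is an arrow-pair in $\Delta$. Pick the arrow $a\xrightarrow{b}c$ (or $b\xrightarrow{a}c$) in $\Delta$: then $a$ has an outgoing arrow with label $b$, and $\sigma(a)$ has an outgoing arrow with label $\sigma(b)$ (from the arrow on the pair $\{\sigma(a),\sigma(b)\}$, using~\ref{enum:condNice3} to arrange the source), contradicting condition~2 of Definition~\ref{def:arrowbreaking}. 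Similarly, a common linear factor $(1+{\mathrm{e}}_a-{\mathrm{e}}_c)$ of $Q_\Delta$ and $\sigma Q_\Delta=Q_{\sigma(\Delta)}$ gives an arrow $a\xrightarrow{b}c$ in $\Delta$ and an arrow $\sigma(a)\xrightarrow{b'}\sigma(c)$ in $\Delta$ for some $b'$; then $c$ has an incoming arrow with label $b$ and $\sigma(c)$ has an incoming arrow with label $\sigma(b)$? — here one must check $b'=\sigma(b)$ is \emph{not} needed: condition~1 of Definition~\ref{def:arrowbreaking} only forbids $\sigma(c)$ from having an incoming arrow with label $\sigma(b)$, so this direction instead needs the variant reading. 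I will therefore use the formulation: $c$ has an incoming arrow (with some label), and separately observe that the arrow-breaking condition, together with~\ref{enum:condNice3} and the labelling constraints, forces the labels to match; this matching is precisely the subtle point and I expect it to be \emph{the main obstacle}, requiring a careful case analysis of how $\sigma$ interacts with the label of an arrow whose source and destination are swapped appropriately. The clean way around it is to note that an arrow $a\xrightarrow{b}c$ also yields, by~\ref{enum:condNice3}, the arrow $b\xrightarrow{a}c$, so $c$ has incoming arrows with labels $a$ \emph{and} $b$; then from the pair $\{\sigma(a),\sigma(c)\}$ being an arrow destination-pair with the arrow $\sigma(a)\xrightarrow{b'}\sigma(c)$ one gets, again by~\ref{enum:condNice3}, that $\sigma(c)$ has incoming arrows with labels $\sigma(a)$ and $b'$, and in particular with label $\sigma(a)$; but $\sigma$ being an involution and $a$ being the source means $a=\sigma(\sigma(a))$, so applying condition~1 to the arrow $b\xrightarrow{a}c$ (node $c$ has incoming label $a$) gives that $\sigma(c)$ has no incoming arrow with label $\sigma(a)$ — contradiction.

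\textbf{\ref{enum:CondLemmaArrowbreaking2} $\Leftrightarrow$ \ref{enum:CondLemmaArrowbreaking3}.} Since $\Z[{\mathrm{e}}_1,\dotsc,{\mathrm{e}}_n]$ is a UFD and each linear factor of $P_\Delta Q_\Delta$ is irreducible, a positive-degree common divisor of (say) $P_\Delta$ and $\sigma P_\Delta$ exists iff they share an irreducible factor iff they share a linear factor; a $\sigma$-invariant divisor $D$ of positive degree of $P_\Delta$ has some irreducible (linear) factor $\ell$, and then $\sigma\ell$ divides $\sigma D = D$, hence $\sigma\ell$ also divides $P_\Delta$, so $\ell$ is a common linear factor of $P_\Delta$ and $\sigma^{-1}P_\Delta=\sigma P_\Delta$; conversely a common linear factor $\ell$ of $P_\Delta$ and $\sigma P_\Delta$ produces the $\sigma$-invariant divisor $\ell\cdot\sigma\ell$ (which genuinely divides $P_\Delta$: if $\ell=\sigma\ell$ it is itself invariant, and if $\ell\neq\sigma\ell$ the two are coprime and both divide $P_\Delta$). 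One must separate the $P$ and $Q$ parts: since $P_\Delta$ involves only factors of the form $({\mathrm{e}}_i+{\mathrm{e}}_j)$ and $Q_\Delta$ only $(1+{\mathrm{e}}_i-{\mathrm{e}}_k)$, and these two families share no common member (the first has zero constant term, the second has constant term $1$), a common linear factor of $P_\Delta$ with $\sigma Q_\Delta$ or vice versa is impossible, so the condition on $P$ and the condition on $Q$ are genuinely independent and together are equivalent to: $P_\Delta Q_\Delta$ has no $\sigma$-invariant divisor of positive degree. For \ref{enum:CondLemmaArrowbreaking3} $\Rightarrow$ \ref{enum:CondLemmaArrowbreaking1} I simply reverse the argument of \ref{enum:CondLemmaArrowbreaking1} $\Rightarrow$ \ref{enum:CondLemmaArrowbreaking2}: the negation of arrow-breaking gives, in one of the two cases of Definition~\ref{def:arrowbreaking}, a node $x$ and $\sigma(x)$ with outgoing arrows labelled $y$ and $\sigma(y)$ respectively, which translates into a common linear factor $({\mathrm{e}}_x+{\mathrm{e}}_y)$ of $P_\Delta$ and $\sigma P_\Delta$, hence the $\sigma$-invariant divisor $({\mathrm{e}}_x+{\mathrm{e}}_y)({\mathrm{e}}_{\sigma(x)}+{\mathrm{e}}_{\sigma(y)})$ of $P_\Delta$; analogously the incoming case yields a $\sigma$-invariant divisor of $Q_\Delta$, contradicting \ref{enum:CondLemmaArrowbreaking3}.
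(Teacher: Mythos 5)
Your proof is correct and follows essentially the same route as the paper: condition \ref{enum:condNice3} turns the two arrow-breaking conditions into statements about the linear factors of $P_\Delta$ and $Q_\Delta$ (the paper's ``obvious from the definition'' step, which you work out in detail, including the label bookkeeping for $Q_\Delta$ that you initially worried about), and the passage between common linear factors of $P_\Delta,\sigma P_\Delta$ (resp.\ $Q_\Delta,\sigma Q_\Delta$) and $\sigma$-invariant divisors is the same factorization argument the paper gives. Your explicit handling of the case $\ell=\sigma\ell$, where the invariant divisor is $\ell$ itself rather than $\ell\cdot\sigma\ell$, is a small point the paper glosses over.
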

\begin{proof}
The equivalence of~\ref{enum:CondLemmaArrowbreaking1} and~\ref{enum:CondLemmaArrowbreaking2} is obvious from the definition.

Clearly, a $\sigma$-invariant divisor of $P_\Delta$ of positive degree is a common divisor to $P_\Delta$ and $\sigma P_\Delta$, decomposing into the product of linear factors dividing  both polynomials.

Conversely, if ${\mathrm{e}}_i+{\mathrm{e}}_j$ divides both $P_\Delta$ and $\sigma P_\Delta$, then $({\mathrm{e}}_i+{\mathrm{e}}_j)({\mathrm{e}}_{\sigma_i}+{\mathrm{e}}_{\sigma_j})$ is a divisor of $P_\Delta$. Similarly for $Q_\Delta$.
\end{proof}

We will abuse terminology and write that  $P_\Delta$ and $Q_\Delta$ have no $\sigma$-invariant divisor when the equivalent conditions of Lemma~\ref{lemma:arrowbreaking} hold.

\begin{example}
Consider the Heisenberg Lie algebra $\lie{h}_{2n+1}$ with basis $\{e_1,\dotsc,\allowbreak e_{2n},y\}$ and non-zero Lie brackets $[e_{2i},e_{2i-1}]=y$, $i=1, \ldots, n$. Then
\[P_\Delta=({\mathrm{e}}_1+{\mathrm{e}}_2)\dotsm ({\mathrm{e}}_{2n-1}+{\mathrm{e}}_{2n}) \quad \mbox{and}\quad
 Q_\Delta=(1+{\mathrm{e}}_1-{\mathrm{y}})\dotsm (1+{\mathrm{e}}_{2n}-{\mathrm{y}}).
\]
Then $\sigma$ defined as
\[\sigma(e_1)=y, \quad \sigma(e_{2i})=e_{2i+1},\ i\geq2, \quad  \sigma(e_{2n})=e_{2n}\]
does not leave any divisor of $P_\Delta Q_\Delta$ invariant; therefore, $\sigma$ defines a Ricci-flat metric on $\mh_{2n+1}$. For $n=1$, a direct computation shows that the metric is flat. It is known from \cite{AM03} that Heisenberg Lie algebras do not admit flat  metrics for $n\geq2$. We can check that our metrics are not flat by using criterion~\ref{enum:CriteriaFlat2} in Proposition~\ref{pr:CriteriaFlatness} applied to $e_{2n-1}$ and $e_1$: in fact
$e_{2n-1}\xrightarrow{e_{k_2}}\sigma(e_1)=y$ with $e_{k_2}=e_{2n}$ fixed by $\sigma$, and there are no arrows of the form $e_{2n-1}\xrightarrow{e_1}\bullet$, $e_1\xrightarrow{\bullet} \sigma(e_{2n-1})=e_{2n-2}$.

The signature of this metric is $(n,n+1)$; other signatures can be obtained, for instance declaring
\[\sigma(e_1)=y, \quad \sigma(e_{4i})=e_{4i+2},\ i\geq2\]
when $n$ is odd. To prove that the metric is not flat for $n> 1$, we can apply again  criterion~\ref{enum:CriteriaFlat2} in Proposition~\ref{pr:CriteriaFlatness} to $e_4,e_1$: the arrow $e_4\xrightarrow{e_{k_2}}\sigma(e_1)=y$ with $e_{k_2}=e_3$ is fixed by $\sigma$ and there are no arrows of the form $e_4\xrightarrow{e_1}\bullet$, $e_1\xrightarrow{\bullet} \sigma(e_4)=e_6$.

It was proved in~\cite{Boucetta} that $\lie{h}_{2n+1}$ admits a Ricci-flat left-invariant metric of any signature $(q,2n+1-q)$ for $2\leq q\leq n$.
\end{example}

\section{Involutions on Lie algebras with large center}
\label{sec:involutions}
Given a  nilpotent Lie algebra $\mg$ with center $\mz$, we denote $s:=\dim \mz$ and $r:=\dim (\mg/\mz)$.  This terminology is adopted along the section in order to give sufficient conditions on $r,s$ for a Lie algebra to carry an arrow-breaking involution.

Let $\mg^i$ denote the lower central series of $\mg$, that is, $\mg^0=\mg$ and $\mg^i=[\mg,\mg^{i-1}]$ for $i\geq 1$. By an inductive reasoning one can prove
\begin{equation}
 \label{eqn:ij}
[\mg^{i},\mg^{j}]\subseteq \mg^{i+j+1},\qquad \mbox{ for  every } i,j\geq 0.
\end{equation}
Recall that if $\mg$ is $k$-step nilpotent, then $\mg^{k+1}=0$ and $\mg^k\subseteq \mz$. Set $\mb^i:=\mz+\mg^i$; then for every $i=1, \ldots, k-1$ one has $\mg^i\subsetneq \mb^{i+1}$, since $[\g,\mb^{i+1}]=\mg^{i+2}$.

Given a nilpotent Lie algebra $\mg$ with a nice basis $\mcB$, the basis is adapted to the lower central series; indeed, for each $i\geq 0$ there is a subset $\mcB_i$ of $\mcB$ such that $\mcB_i$ is a basis of $\mg^i$. In addition, any nice basis is a union of disjoint subsets $\mcB=\mcX\cup \mcZ$ where $\mcZ$ is a basis of $\mz$ and  $\mcX$ is a basis of a complement. In particular, $\mcX$ has $r$ elements and $\mcZ$ has $s$. Suppose that $\mcX=\{x_1,\dotsc, x_r\}$ and $\mcZ=\{z_1,\dotsc, z_s\}$; then, by~\eqref{eqn:PQ}, the linear factors of $P_\Delta$ have the form ${\mathrm{x}}_i+{\mathrm{x}}_j$ and the linear factors of $Q_\Delta$ have the form $1+{\mathrm{x}}_i-{\mathrm{z}}_j$, $1+{\mathrm{x}}_i-{\mathrm{x}}_j$.

\begin{proposition}
\label{prop:rs}
Let $\mg$ be a nice $k$-step nilpotent Lie algebra such that
\[r\leq  s+3.\]
Then $\g$ has an arrow-breaking involution $\sigma$.
\end{proposition}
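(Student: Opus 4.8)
The plan is to exploit the structure of a nice basis as a disjoint union $\mcB = \mcX \cup \mcZ$ with $\mcZ$ a basis of the center $\mz$ and $\mcX=\{x_1,\dots,x_r\}$ a complement, and to build $\sigma$ so that it moves the generators $x_i$ (the only nodes that can appear as labels or sources of arrows, and the only ones that can appear in the linear factors of $P_\Delta$) into the center, where they cannot possibly participate in a $\sigma$-invariant divisor. By Lemma~\ref{lemma:arrowbreaking}, it suffices to produce an order-two $\sigma$ such that neither $P_\Delta$ nor $Q_\Delta$ has a $\sigma$-invariant divisor of positive degree, equivalently such that no linear factor of $P_\Delta$ (resp.\ $Q_\Delta$) is carried by $\sigma$ to another linear factor of $P_\Delta$ (resp.\ $Q_\Delta$). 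Since every linear factor of $P_\Delta$ has the form ${\mathrm{x}}_i+{\mathrm{x}}_j$ and every linear factor of $Q_\Delta$ has the form $1+{\mathrm{x}}_i-{\mathrm{z}}_j$ or $1+{\mathrm{x}}_i-{\mathrm{x}}_j$, the two indeterminates in any linear factor always include at least one ${\mathrm{x}}$-variable.

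The key idea is then: choose $\sigma$ to be the product of transpositions $(x_1\,z_1)(x_2\,z_2)\dotsm$, pairing as many $x_i$ as possible with distinct central nodes $z_j$, and pairing the remaining $x$'s among themselves in a controlled way. Because $r \le s+3$, all but at most three of the $x_i$ can be swapped with a distinct central node; so after relabeling we may assume $\sigma(x_i)=z_i$ for $i\le r-3$ (if $r\le s$ we can in fact swap all of them, and the remaining $z_j$ are fixed), leaving only $x_{r-2},x_{r-1},x_r$ to handle, and one pairs these as $(x_{r-2}\,x_{r-1})$ with $x_r$ fixed, or similar, depending on parity — and in the easy case $r\le s$ we simply fix $x_r$ or swap it too. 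I would first verify that with this $\sigma$, if ${\mathrm{x}}_i+{\mathrm{x}}_j$ is a linear factor of $P_\Delta$ with $i<j$ and its $\sigma$-image $\sigma({\mathrm{x}}_i)+\sigma({\mathrm{x}}_j)$ is again a linear factor of $P_\Delta$, we get a contradiction: the image is a sum of two indeterminates both of which must be ${\mathrm{x}}$-variables (since $P_\Delta$-factors only involve ${\mathrm{x}}$'s), forcing $x_i,x_j \in \{x_{r-2},x_{r-1},x_r\}$ (the only $x$'s not sent to the center), and then one checks by hand that the two or three arrows among these three nodes cannot produce such a coincidence — this is a finite check on a diagram with at most three relevant nodes. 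The same argument handles $Q_\Delta$: a factor $1+{\mathrm{x}}_i-{\mathrm{z}}_j$ maps to $1+{\mathrm{z}}_i-{\mathrm{x}}_j$ (or $1 + {\mathrm{z}}_i - {\mathrm{z}}_j$), which is never a linear factor of $Q_\Delta$ since those always have a plain ${\mathrm{x}}$-variable in the $+$ slot and the image has a ${\mathrm{z}}$ there — unless $x_i$ is one of the three exceptional generators, again a finite check.

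The main obstacle I expect is precisely the bookkeeping for the $r-2,r-1,r$ exceptional generators and, more subtly, making sure that when $r=s+3$ there genuinely are enough central nodes to absorb $x_1,\dots,x_{r-3}$ — here one must use $\mcZ$ has $s = r-3$ elements, so $x_1,\dots,x_{r-3}$ exhaust exactly the central nodes, $\sigma$ swaps each $x_i \leftrightarrow z_i$ for $i \le r-3$, and then among $\{x_{r-2},x_{r-1},x_r\}$ one argues that at most finitely many arrow-configurations arise and each admits an order-two choice (e.g.\ transpose two and fix the third, or fix all three) that breaks every arrow; I would dispatch this by a short case analysis on which of the possible arrows $x_a \xrightarrow{x_b} \bullet$ with $a,b \in \{r-2,r-1,r\}$ are present, using conditions~\ref{enum:condNice1}–\ref{enum:condNice4} and acyclicity to limit the possibilities. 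A secondary subtlety is that one must also rule out $\sigma$-invariant \emph{divisors} rather than merely $\sigma$-fixed linear factors, but Lemma~\ref{lemma:arrowbreaking} already reduces invariant divisors to the pairwise linear-factor condition, so this is not an extra burden. I would close by invoking Proposition~\ref{prop:2OrderSigmaEnhancedGiveRicciFlat} to get the Ricci-flat metric, though strictly the statement only claims the existence of the involution.
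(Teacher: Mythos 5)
Your overall skeleton (split the nice basis as $\mcB=\mcX\cup\mcZ$, pair elements of $\mcX$ with central nodes, and reduce to Lemma~\ref{lemma:arrowbreaking}) is the same as the paper's, and your treatment of the easy cases $r\leq s$ and $r=s+1$ is essentially correct. But there is a genuine gap exactly where the proposition has content, namely $r=s+2$ and $r=s+3$: you assume that the two or three ``exceptional'' generators left unpaired with the center can be taken to be an arbitrary (relabelled) subset of $\mcX$, after which everything is ``a finite check on a diagram with at most three relevant nodes.'' That is false. If the triple $\{y_1,y_2,y_3\}$ is chosen badly --- for instance if all three pairwise brackets $[y_1,y_2],[y_1,y_3],[y_2,y_3]$ are nonzero, so that $({\mathrm{y}}_1+{\mathrm{y}}_2)({\mathrm{y}}_1+{\mathrm{y}}_3)({\mathrm{y}}_2+{\mathrm{y}}_3)$ divides $P_\Delta$ --- then \emph{every} involution stabilizing the triple preserves a divisor of $P_\Delta$ (fixing all three, or swapping two and fixing one, always leaves either a fixed factor or a product of two swapped factors invariant). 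So the finite check does not ``go through by hand''; the whole difficulty is to prove that a \emph{good} triple exists, and this is where the paper uses the nilpotent structure: the lower central series, the inclusion~\eqref{eqn:ij} $[\mg^i,\mg^j]\subseteq\mg^{i+j+1}$, and counting arguments such as $\dim\ker\ad y_1\geq s+2$ or the bound on the number of outgoing arrows in the two-step case, to select $y_1,y_2,y_3$ so that the sole $P_\Delta$-factor among them is ${\mathrm{y}}_2+{\mathrm{y}}_3$ and their brackets land in the center or in $\Span{y_1,y_2}$. Your proposal never invokes the step of nilpotency or the position of the $y_i$ in the lower central series, so it cannot perform this selection; indeed an argument of your shape would apply just as well to $r=s+4$, where the paper's $16$-dimensional two-step example shows the conclusion fails.

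A second, related gap is in your $Q_\Delta$ analysis: you claim the only problematic $Q$-factors are those involving the three exceptional generators and that this is again a local check. But a factor $1+{\mathrm{y}}_i-{\mathrm{x}}_j$ (arrow $y_i\to x_j$ with $x_j$ paired to the central node $z_j$) is mapped by $\sigma$ to $1+{\mathrm{y}}_{i'}-{\mathrm{z}}_j$, which \emph{is} a factor of $Q_\Delta$ whenever the diagram contains an arrow $y_{i'}\to z_j$; similarly $1+{\mathrm{y}}_i-{\mathrm{z}}_j$ maps to $1+{\mathrm{y}}_{i'}-{\mathrm{x}}_j$. Ruling these out is a condition on where the images of $\ad y_i$ sit inside $\g$ (center, or $\Span{y_1,y_2}$, etc.), not a condition on the subdiagram spanned by three nodes; this is precisely what the paper's case analysis on $\mb^{k-2}$ and $\mg^{k-3}$ is designed to guarantee. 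Without an argument producing such $y_1,y_2,y_3$, the proof is incomplete.
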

\begin{proof}
Let $\mcB=\mcX\cup \mcZ$ be a nice basis of $\mg$ as above. If $r\leq s$, we can choose an involution $\sigma\colon \mcB\to \mcB$ which is the extension of an injective map $\sigma:\mcX\to \mcZ$; it is clear that $P_\Delta Q_\Delta$ has no $\sigma$-invariant divisor.

If $r>s$, we consider involutions satisfying
\begin{equation}
 \label{eqn:sigmamapsxitoyi}
 \sigma(\mcZ)=\mcX.
\end{equation}
If $x_i=\sigma (z_i)$ for $i=1, \ldots, s$ and $\mcX=\{x_1, \ldots, x_s, y_1, \ldots,y_{r-s}\}$,
it is clear that invariant divisors of $Q_\Delta$ arise from factors of the form $1+{\mathrm{y}}_i-{\mathrm{x}}_j$, and any invariant divisor of $P_\Delta$ will be in $\Z[{\mathrm{y}}_{1},\dotsc, {\mathrm{y}}_{r-s}]$. We will show that it is possible to choose $y_1,\ldots, y_{r-s}\in \mcX$  so that $P_\Delta Q_\Delta$ has no $\sigma$-invariant divisors.
\smallskip

For $r=s+1$, take $\sigma$ as in~\eqref{eqn:sigmamapsxitoyi}  fixing some $y_1\in\mcX$. Then, $ P_\Delta Q_\Delta$ has no invariant divisor under any $\sigma$ of the above type, since $\sigma({\mathrm{y}}_{1}+{\mathrm{x}}_i)={\mathrm{y}}_{1}+{\mathrm{z}}_i$ and $\sigma(1+{\mathrm{y}}_1-{\mathrm{x}}_i)=1+{\mathrm{y}}_1-{\mathrm{z}}_i$.
\smallskip

For $r=s+2$ and step two, it suffices to choose  $y_1,y_2\in \mcX$ in such a way that $ {\mathrm{y}}_{1}+{\mathrm{y}}_{2}$ does not divide $P_\Delta$; this is made possible by the fact that each $x\in \mcX$ has at most $r-2$ outgoing arrows.

For $r=s+2$ and step $k\geq 3$, observe that $\g^{k-2}$ cannot be contained in the center. We distinguish two cases.

\emph{i)}
If  $\mb^{k-2}=\mz\oplus \Span{y_{1}}$ for some $y_{1}\in\mcX$ (so that in particular, $\g^{k-2}=\Span{y_{1}}\oplus \g^{k-1}$), we claim that there exists $y_{2}\in \mcX$ outside of $\mb^{k-2}$ that commutes with $y_{1}$ and such that $[y_{2},\g]\subset \mb^{k-2}$. For step three, $y_{2}$ exists because $\dim\ker\ad y_{1}\geq s+2$, and the second condition is automatic. For step $k\geq 4$, take $y_{2}$ in $\g^{k-3}\smallsetminus\mb^{k-2}$. Then~\eqref{eqn:ij} implies that $y_{1}, y_{2}$ commute. Choose $\sigma$  interchanging $y_{1}$ with $y_{2}$ and satisfying~\eqref{eqn:sigmamapsxitoyi}. The only arrows going out of $y_{1}$ end in the center, and the only arrows going out of $y_{2}$ end in either $y_{1}$ or the center, so $Q_\Delta$ has no invariant divisor.

\emph{ii)}
Suppose now that  $\mb^{k-2}=\mz\oplus \Span{y_{1},y_{2},\dots}$ for some $y_1,y_2\in \mcX$; then  $y_{1}$ and $y_{2}$ commute because of~\eqref{eqn:ij}, and the only arrows going out of  $y_{1}$, $y_{2}$ end in the center, so $Q_\Delta$ has no invariant divisor.
\smallskip

For $r=s+3$, we claim that there exist $y_1,y_2,y_3\in \mcX$ such that the sole linear factor of $P_\Delta$ in $\Z[{\mathrm{y}}_1,{\mathrm{y}}_2,{\mathrm{y}}_3]$ is ${\mathrm{y}}_2+{\mathrm{y}}_3$ and for every $x\in\mcB$, $1+{\mathrm{y}}_i-{\mathrm{x}}$ does not divide $Q_\Delta$.
In this case, an involution $\sigma$ such that $\sigma(y_{1})=y_{3}$, fixing $y_2$ and satisfying~\eqref{eqn:sigmamapsxitoyi} has the property that $P_\Delta Q_{\Delta}$ has no $\sigma$-invariant divisors.

To prove the claim, if $\mg$ is step $2$, each $x\in \mcX$ has at most $r-3$ outgoing arrows, so there exist $y_1, y_2,y_3\in \mcX$ such that $({\mathrm{y}}_{1}+{\mathrm{y}}_{2})({\mathrm{y}}_{1}+{\mathrm{y}}_{3})$ is coprime with $P_\Delta$.

Suppose $\mg$ is 3-step nilpotent; then
\[\mg\supset\mg^1\supset \mg^2\supset\mg^3=0.\]
Take $y_{1}\in\mcX$ such that $y_1\in \mg^1\smallsetminus \mg^2$; since $\ad_{y_{1}}:\mg\to \mz$ and $\dim\mz=s$, there exist distinct elements $ y_{2}, y_{3}\in \mcX$ commuting with $y_{1}$, so $({\mathrm{y}}_{2} +{\mathrm{y}}_{1})({\mathrm{y}}_{3}+{\mathrm{y}}_{1})$ does not divide $P_\Delta$. Moreover, we can assume that $1+{\mathrm{y}}_{2}-{\mathrm{y}}_{3}$ does not divide $Q_{\Delta}$ and ${\operatorname{Im}}(\ad_{y_{3}})\subset \Span{y_{1},y_{2}}$.  By construction, this basis verifies the claim.

Suppose the step $k$ is at least $4$ and let $y_{1}\in \mcX$ be such that $y_1\in \mg^{k-2}\smallsetminus \mz$. We split the proof into two cases.

\emph{i)} $\mb^{k-2}= \mz\oplus \Span{y_{1}}$ for some $y_1\in \mcX$. Pick $y_{2}\in \mcX$ such that $y_2\in \mg^{k-3}\smallsetminus \mb^{k-2}$; then as before, $y_2$ commutes with $y_1$. If $\dim \mb^{k-3}/ \mb^{k-2}\geq 2$, choose $y_{3}\in \mcX$ different from $y_{2}$ inside $\mg^{k-3}\smallsetminus \mb^{k-2}$. Otherwise,  $\mb^{k-3}= \mb^{k-2}\oplus\Span{y_{2}}$; if $k\geq 5$, pick $y_{3}\in \mcX$ inside $\mg^{k-4}\smallsetminus \mb^{k-3}$, which by~\eqref{eqn:ij} commutes with $y_{1}$. If $k=4$, let $y_{3}\in\mcX$ be an element commuting with $y_{1}$ inside the smallest possible ideal of the lower central series (indeed, $\mg^1$ or $\mg^0=\mg$). Then $[y_{3},\g]\subset \mz\oplus\Span{y_{1},y_{2}}$.

\emph{ii)} $\mb^{k-2}= \mz\oplus\Span{y_{1},y_{2},\dots}$ for some $y_1, y_2\in \mcX$. Take $y_3\in\mcX$, different from $y_{1},y_{2}$
; if possible take $y_3$ inside $\mg^{k-2}\smallsetminus \mz$, otherwise choose $y_{3}\in \mg^{k-3}\smallsetminus \mb^{k-2}$. In both cases, $[y_{3},\g]$ is contained in $\mz\oplus\Span{y_{1},y_{2}}$ and $[y_{2},\g]$ is contained in $\mz$; in addition, $({\mathrm{y}}_{1}+{\mathrm{y}}_{2})(\mathrm{y}_{1}+{\mathrm{y}}_{3})$ does not divide $P_\Delta$ because of~\eqref{eqn:ij}. Thus, there is a basis verifying our claim.
\end{proof}

It turns out that the bound $r\leq s+3$ is sharp. In order to demonstrate this, we will need the following observation:
\begin{lemma}\label{lm:sigz}
Let $\g$ be a nice Lie algebra  such that each node corresponding to an element of the center has at least $(r+s)/2$ incoming arrows; then any arrow-breaking involution maps elements of the center to elements outside the center.
\end{lemma}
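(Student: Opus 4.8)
The plan is to argue by contradiction: suppose $\sigma$ is an arrow-breaking involution and that some central node $z\in\mcZ$ is mapped by $\sigma$ into the center, i.e.\ $\sigma(z)=z'\in\mcZ$ (possibly $z'=z$). The key numerical fact I want to exploit is that both $z$ and $z'$ have many incoming arrows (at least $(r+s)/2$ each by hypothesis), while an incoming arrow at a central node $w$ has the form $x\xrightarrow{x'}w$ with $x,x'\in\mcX$; so the set of \emph{labels} of incoming arrows at $w$ is a subset of $\mcX$, and likewise the set of \emph{sources} of incoming arrows at $w$ is a subset of $\mcX$. First I would record that, by~\ref{enum:condNice2} (distinct arrows with the same destination have distinct labels) together with~\ref{enum:condNice3}, the incoming arrows at $w$ come in pairs $x\xrightarrow{x'}w$, $x'\xrightarrow{x}w$, and in particular the set $S(w)\subseteq\mcX$ of nodes appearing as a source (equivalently, as a label, since sources and labels coincide for a fixed destination by~\ref{enum:condNice3}) of an incoming arrow at $w$ has size at least $(r+s)/2$; actually I expect the clean statement is that the multiset of incoming arrows at $w$ has even size and the underlying set of participating $\mcX$-nodes has size $\geq (r+s)/2$, but in any case $|S(w)|\geq (r+s)/2 - O(1)$, and I'll need to be a little careful about the exact count.

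Next comes the core combinatorial step. Since $|S(z)|\geq (r+s)/2$ and $|S(z')|\geq (r+s)/2$, and $\sigma$ restricted to $\mcX$ together with the part of $\mcZ$ not equal to $\{z,z'\}$ still maps into a set of size $r+s-|\{z,z'\}|$, a counting/pigeonhole argument should force $S(z)$ and $\sigma(S(z'))$ to overlap substantially inside $\mcX$ — in fact to contain a common pair. More precisely: if $x\xrightarrow{x'}z$ is an incoming arrow at $z$, then arrow-breaking (condition 1 in Definition~\ref{def:arrowbreaking}) says $\sigma(z)=z'$ does \emph{not} have an incoming arrow with label $\sigma(x')$; equivalently $\sigma(x')\notin S(z')$ (using that the labels at $z'$ are exactly $S(z')$). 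So $\sigma$ maps $S(z)$ into the complement of $S(z')$ inside the ambient node set. The ambient set has size $r+s$ (or $r+s-2$ if we already know $\sigma$ fixes $z,z'$ setwise and we work outside it — either way the estimate is of this order), its subset $S(z')$ has size $\geq (r+s)/2$, so its complement has size $\leq (r+s)/2$; but $\sigma$ is injective and $|S(z)|\geq (r+s)/2$, so $\sigma(S(z))$ must be \emph{equal} to the complement of $S(z')$, forcing all the inequalities to be equalities. Pushing this rigidity one more notch — e.g.\ applying the same argument with the roles of $z,z'$ swapped, and using that both $S(z),S(z')$ together with their $\sigma$-images nearly fill the $r+s$ nodes — should produce a contradiction, because there is simply not enough room: the two sets $S(z')$ and $\sigma(S(z))$ partition (almost) the whole node set, yet $\sigma$ must also do something consistent with $\sigma(S(z'))$ versus $S(z)$.

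The step I expect to be the main obstacle is getting the bookkeeping on the incoming-arrow count exactly right: distinguishing the case $\sigma(z)=z$ (a fixed central node) from $\sigma(z)=z'\neq z$, handling whether $z$ or $z'$ might accidentally lie in $S$ of the other, and tracking whether the relevant ambient set has size $r+s$ or must be shrunk by the nodes already accounted for. The inequality $(r+s)/2 + (r+s)/2 > r+s$ that I want is false with a weak inequality, so the argument genuinely needs the structural input — the pairing of arrows from~\ref{enum:condNice3}, the label-distinctness from~\ref{enum:condNice2}, and the fact that a central node's incoming arrows involve only $\mcX$-nodes — to upgrade "$\geq$" to a strict contradiction. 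I would phrase the final count so that the two disjoint sets $S(z')$ and $\sigma(S(z))$, both of size $\geq (r+s)/2$ and both contained in the $(r+s)$-element node set, are forced to be a partition; then the symmetric conclusion ($S(z)$ and $\sigma(S(z'))$ also partition it) combined with $\sigma^2=\mathrm{id}$ yields $S(z)=S(z')$ and $\sigma(S(z))=S(z)^c$, whence $z$ has an incoming arrow with label $x'$ iff $z'$ does not with label $\sigma(x')$ \emph{and} $z'$ has one with label $x'$ iff $z$ does not with $\sigma(x')$ — tracing a single label through this pair of biconditionals gives the contradiction. I'd keep the exposition at the level of these set inclusions rather than writing out index chases.
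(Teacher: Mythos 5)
Your overall strategy --- counting the labels of the incoming arrows at $z$ and $\sigma(z)$ and playing this against the arrow-breaking condition --- is exactly the content of the paper's (one-line) proof, and your first two steps are sound: by~\ref{enum:condNice2} the labels of the incoming arrows at a central node $w$ form a set $S(w)\subseteq\mcX$ with $\abs{S(w)}\geq(r+s)/2$, and condition (1) of Definition~\ref{def:arrowbreaking} applied to $z$ gives $\sigma(S(z))\subseteq N(\Delta)\setminus S(\sigma(z))$, which by counting must be an equality with $\abs{S(z)}=\abs{S(\sigma(z))}=(r+s)/2$. The genuine gap is in how you close the argument. From the two identities $\sigma(S(z))=N(\Delta)\setminus S(\sigma(z))$ and $\sigma(S(\sigma(z)))=N(\Delta)\setminus S(z)$ you cannot deduce $S(z)=S(\sigma(z))$: applying $\sigma$ and using $\sigma^2=\id$ merely reproduces one identity from the other (abstractly, $N=\{1,2\}$, $\sigma=\id$, $S(z)=\{1\}$, $S(\sigma(z))=\{2\}$ satisfies both). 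Worse, even granting $S(z)=S(\sigma(z))=:S$ and $\sigma(S)=S^{c}$, the pair of biconditionals you propose to ``trace a label through'' is mutually consistent: starting from $x'\in S(z)$ you get $\sigma(x')\notin S(\sigma(z))$, and feeding $\sigma(x')$ into the other biconditional sends you back to $x'\in S(z)$. So no contradiction comes out of the purely combinatorial bookkeeping; the configuration is consistent in the abstract.

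The missing ingredient is the structural fact you recorded at the start but never used at the end: $S(z),S(\sigma(z))\subseteq\mcX$, while $z$ and (by assumption) $\sigma(z)$ lie in $\mcZ$. Since $S(\sigma(z))\subseteq\mcX$, its complement $N(\Delta)\setminus S(\sigma(z))=\sigma(S(z))$ contains all of $\mcZ$; hence $\sigma(w)\in S(z)\subseteq\mcX$ for every central node $w$, and taking $w=z$ gives $\sigma(z)\in\mcX$, contradicting $\sigma(z)\in\mcZ$. (Equivalently: $z$ lies in $N(\Delta)=\sigma(S(z))\sqcup S(\sigma(z))$ but not in $S(\sigma(z))$, so $z\in\sigma(S(z))$ and therefore $\sigma(z)\in S(z)\subseteq\mcX$.) Inserting this one sentence in place of your final ``rigidity'' paragraph completes the proof, and it is then essentially the paper's argument; note also that there is no need to shrink the ambient set to $r+s-2$ elements, nor to treat the cases $\sigma(z)=z$ and $\sigma(z)\neq z$ separately --- the count with labels (rather than sources) is already clean thanks to~\ref{enum:condNice2}.
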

\begin{proof}
For a contradiction, let $z$ be an element in the center such that $\sigma(z)$ is in the center; if $\sigma$ is arrow-breaking, the arrows ending at $z$ and $\sigma(z)$ have different labels, which is absurd.
\end{proof}

\begin{example}
An example of a nice nilpotent Lie algebra with $r=s+4$ that does not admit an arrow-breaking $\sigma$ is the two-step nilpotent Lie algebra with basis $\{e_1, \ldots, e_{16}\}$ and such that
\begin{gather*}
de^{11}=e^{15}+e^{24}+e^{39}+e^{6,10}+e^{78}\\
de^{12}=e^{16}+e^{2,10}+e^{35}+e^{48}+e^{79}\\
de^{13}=e^{17}+e^{29}+e^{36}+e^{45}+e^{8,10}\\
de^{14}=e^{18}+e^{27}+e^{34}+e^{56}+e^{9,10}\\
de^{15}=e^{19}+e^{28}+e^{3,10}+e^{46}+e^{57}\\
de^{16}=e^{1,10}+e^{23}+e^{47}+e^{59}+e^{68}
\end{gather*}
In this case, any $\sigma$ would have to map the center to elements of the complement of the center by Lemma~\ref{lm:sigz}. Thus, there is a set $\{y_1,y_2,y_3,y_4\}$ of elements outside the center which is invariant under $\sigma$. The Lie algebra has the property that $P_\Delta$ has at least $3$ linear factors involving only the variables ${\mathrm{y}}_1,\dotsc, {\mathrm{y}}_4$. Thus, $\sigma$ preserves a divisor of $P_\Delta$.

On the other hand, if we take the order two permutation $\sigma=(1\,5)(3\,11)\allowbreak (6\,12) \allowbreak (7\,13)\allowbreak (8\, 14)(9\, 15)(10\, 16)$, a direct computation using~\eqref{eqn:ricddadad} shows that the $\sigma$ diagonal me\-tric~\eqref{eqn:sigmametric} with $g_i=1$ for all $i$ is Ricci-flat. Notice that in this case not every metric of the form~\eqref{eqn:sigmametric} is Ricci-flat.
\end{example}

\begin{example}
In dimension $11$, an example of a two-step nice nilpotent Lie algebra  with  $r=s+5$ not admitting any arrow-breaking $\sigma$ is the Lie algebra with the form
\[
 (0,0,0,0,0,0,0,0,e^{12}+e^{34}+e^{56}+e^{78},e^{13}+e^{42}+e^{57}+e^{86}, e^{14}+e^{23}+e^{58}+e^{67})
\]
relative to a basis $\{e_1, \ldots, e_{11}\}$. Indeed, assume that $\sigma$ leaves no divisor of $P_\Delta Q_\Delta$ invariant. In particular $Q_\Delta$ has no invariant divisors, so $\sigma$ maps $\{e_9,e_{10},e_{11}\}$ to $\{e_1,\dotsc,e_8\}$. Thus, there is a fixed element in $\{e_1,\dotsc, e_8\}$; by symmetry, we can assume $\sigma (e_1)=e_1$. In order for $P_\Delta$ not to have invariant divisors, $\sigma$ must map $\{e_2,e_3,e_4\}$ to $\{e_5,\dotsc,e_{11}\}$. If all of $e_2,e_3,e_4$ are mapped into the center, then $\{e_5,e_6,e_7,e_8\}$ is invariant, and $P_\Delta$ has an invariant divisor. Thus, we can assume $\sigma(e_2)=e_5$. This implies that $\sigma(e_3),\sigma(e_4)\neq e_5,e_6,e_7,e_8$. So $e_3$ and $e_4$ are mapped into the center, as well as one of $e_6,e_7,e_8$. The elements in $\{e_6,e_7,e_8\}$ that are not mapped into the center by $\sigma$ determine an invariant factor of $P_\Delta$, giving a contradiction.

However, a $\sigma$-diagonal Ricci-flat metric can be constructed by taking
\[\sigma = (1\,9)(2\, 10)(3\, 11)(5\, 6)\]
and $\bil$ as in~\eqref{eqn:sigmametric} with $g_i=1$ for all $i$.
\end{example}

Proposition~\ref{prop:rs} applies to the class of two-step nilpotent Lie algebras attached to (undirected) graphs
introduced in~\cite{DaniMainkar:AnosovAutCompactNil}. Given a graph $(V,E)$, let $V_0$ be the free real vector space genereated by $V$ and let $V_1$ the subspace of $\Lambda^2V_0$ generated by $v\wedge v'$ where $v,v'$ are adjacent nodes in $(V,E)$. The attached Lie algebra is the vector space $V_0\oplus V_1$ where the nonzero Lie brackets are $[v,v']=v\wedge v'$.

\begin{corollary}
Any two-step nilpotent Lie algebra attached to a graph has an arrow-breaking involution.
\end{corollary}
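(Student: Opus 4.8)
The plan is to reduce to connected graphs. If $(V,E)$ has connected components $(V_1,E_1),\dots,(V_m,E_m)$, the attached Lie algebra splits as a direct sum $\g=\g_1\oplus\dots\oplus\g_m$, with $\g_\ell$ the Lie algebra attached to $(V_\ell,E_\ell)$; the union of the standard nice bases of the $\g_\ell$ is a nice basis of $\g$, and the associated nice diagram $\Delta$ is the disjoint union of the diagrams $\Delta_\ell$. The point is that no arrow of $\Delta$, \emph{nor its label}, can leave a component: an arrow $x\xrightarrow{y}z$ forces $[x,y]\neq0$, hence $x,y,z$ lie in a common $\g_\ell$. One really does have to carry out this decomposition before invoking Proposition~\ref{prop:rs}: the estimate $\dim(\g/\mz(\g))\le\dim\mz(\g)+3$ behind that proposition is not stable under disjoint unions — for instance $c$ disjoint edges yield $\g\cong\mathfrak{h}_3^{\oplus c}$, for which $\dim(\g/\mz(\g))-\dim\mz(\g)=c$.

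Fix $\ell$. If $V_\ell$ is a single vertex, $\g_\ell\cong\R$ and I take $\sigma_\ell=\id$ (the arrow-breaking condition is vacuous, there being no arrows). Otherwise $(V_\ell,E_\ell)$ is connected with at least two vertices, hence has no isolated vertices and satisfies $|E_\ell|\ge|V_\ell|-1$. Since $\g_\ell$ is two-step nilpotent one has $[\g_\ell,[\g_\ell,\g_\ell]]=0$, and a direct check shows $\mz(\g_\ell)$ equals the commutator $V_1(\g_\ell)\subset\Lambda^2 V_0(\g_\ell)$, of dimension $|E_\ell|$. Writing $s_\ell=\dim\mz(\g_\ell)$ and $r_\ell=\dim(\g_\ell/\mz(\g_\ell))$, this gives $r_\ell=|V_\ell|\le|E_\ell|+1=s_\ell+1\le s_\ell+3$, so Proposition~\ref{prop:rs} yields an arrow-breaking involution $\sigma_\ell$ of $\Delta_\ell$; moreover $\sigma_\ell$ is a genuine order-two permutation whenever the component is nontrivial.

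Finally, let $\sigma$ be the permutation of $N(\Delta)=\bigsqcup_\ell N(\Delta_\ell)$ restricting to $\sigma_\ell$ on each piece; it is an involution, of order two as soon as $E\neq\emptyset$ (and if $E=\emptyset$ the algebra is abelian, so any transposition works, assuming $\dim\g\ge2$, the remaining case being vacuous). Since $\sigma$ preserves each $N(\Delta_\ell)$ and, by the first paragraph, every arrow of $\Delta$ together with its label lies inside a single $\Delta_\ell$, verifying the two clauses of Definition~\ref{def:arrowbreaking} for $\sigma$ on $\Delta$ amounts to verifying them for each $\sigma_\ell$ on $\Delta_\ell$, which holds by construction; hence $\sigma$ is arrow-breaking. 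The only steps needing genuine care are this component-wise compatibility and the identification $\mz(\g_\ell)=V_1(\g_\ell)$ (equivalently the inequality $r_\ell\le s_\ell+1$); the rest is bookkeeping, and the one conceptual point to flag is that Proposition~\ref{prop:rs} must not be applied to $\g$ itself.
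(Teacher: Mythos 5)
Your argument is correct and follows the same route as the paper: restrict to a connected component, note that the center there is the edge space so $r=|V|\le |E|+1=s+1\le s+3$, apply Proposition~\ref{prop:rs}, and handle disconnected graphs by the direct-sum decomposition over components. The extra care you take in justifying the componentwise gluing (and in noting that Proposition~\ref{prop:rs} cannot be applied to the whole algebra directly, e.g.\ for disjoint unions of edges) only makes explicit what the paper leaves implicit in its one-line treatment of the disconnected case.
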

\begin{proof}
Suppose the graph is connected. On a connected graph, the number of vertices $\abs{V}$ and the number of edges $\abs{E}$ are related by $\abs{E}\geq\abs{V}-1$. So, the attached Lie algebra has center of dimension $\abs{E}$ and dimension $\abs{E}+\abs{V}$; by Proposition~\ref{prop:rs}, it has an arrow-breaking involution.

If the graph is not connected, the attached Lie algebra is a direct sum of Lie algebras attached to its connected components.
\end{proof}

\section{Involutions on nilradicals of parabolic subalgebras}
\label{sec:parabolic}
In this section we recall a standard construction of nilpotent Lie algebras associated to a split simple Lie group and a subset $\Theta$ of the set of simple roots (see e.g.~\cite{Ko1}); for the simple Lie algebras $A_n,B_n,C_n$ and appropriate choices of $\Theta$, we obtain infinite families of Ricci-flat, nonflat nilpotent Lie algebras.

Let $\mgg$ be a split real simple Lie algebra with Iwasawa  decomposition $\mgg=\mk\oplus\ma\oplus\mn$ and root system $\Pi$. Let $\Sigma$ be a set of positive simple roots generating $\Pi$; we denote by $\Pi^+$ the set of positive roots.
As usual, if $\gamma\in\Pi^+$, then $x_\gamma$ denotes an arbitrary root vector in the one-dimensional root space $\mgg_\gamma$ and if $\alpha\in\Sigma$,  $\coord_{\alpha}(\gamma)$ denotes the $\alpha$-coordinate of $\gamma$
when it is expressed as a linear combination of simple roots. Let $\gamma_{\max}$ denote the unique maximal root of $\Pi^+$.

The set of parabolic Lie subalgebras of $\mgg$ containing the Borel subalgebra $\ma\oplus\mn$ is parametrized by
subsets of simple roots $\Sigma$ as follows.
Given a subset $\Theta\subset\Sigma$, denote $\lela\Theta\rira^\pm$ the set of positive/negative roots generated by $\Theta$. The corresponding parabolic subalgebra of $\mgg$
is $\mpp_\Theta$ where
\[
\mpp_\Theta=\ma\oplus \sum_{\gamma \in \Pi^+} \mg_\gamma \oplus \sum_{\gamma\in \lela\Theta\rira^-}\mg_\gamma=\ma \oplus \sum_{\gamma \in \lela\Theta\rira^+\cup \lela\Theta\rira^-} \mg_\gamma \oplus \sum_{\gamma\in \Pi^+\smallsetminus\lela\Theta\rira^+}\mg_\gamma.
\]
The nilradical of $\mpp_\Theta$ is the Lie algebra
\[
\mn_\Theta= \sum_{\gamma\in \Pi^+\smallsetminus\lela\Theta\rira^+}\mg_\gamma.
\]
This is a nilpotent Lie algebra and its lower central series (which coincides, after transposing the
indexes, with the upper central series) can be described as follows~\cite[Theorem 2.12]{Ko1}. Given $\gamma\in\Pi$, let
\[
o(\gamma)=\sum_{\alpha\in\Sigma\smallsetminus \Theta} \coord_{\alpha}(\gamma)
\]
be the {\em order} of $\gamma$ with respect to $\Theta$. The order can be positive, negative or zero. For any $\gamma\in \Pi^+$, $o(\gamma)\geq 0$ and $\gamma\in\Pi^+\smallsetminus \lela\Theta\rira^+$ if and only if $ o(\gamma)>0$.
For $i=0,1, \ldots $, let
\[
 \mg_{(i)}=\bigoplus_{\substack{
               \gamma\in\Pi^+ \\
               o(\gamma)=i
}} \mg_{\gamma}.
\]
If
$\mn_\Theta=\mn_\Theta^0\supset\mn_\Theta^1\supset\dots\supset \mn_\Theta^{k-1}\supset\mn_\Theta^k=0$ is
the lower central series of $\mn$, then
\begin{equation}\label{eq.lcs}
 \mn_\Theta^j=\bigoplus_{i=j+1}^{k}\mg_{(i)},\quad  k=o(\gamma_{\text{max}})\quad
 \text{ and }\quad \mn_\Theta^{k-1}=\mg_{(k)} \mbox{ is the center of }\mn.
\end{equation}
It follows from this description of the lower central series that
the nilradical $\mn_\Theta$ is abelian if and only if $\Sigma\smallsetminus\Theta=\{\alpha\}$ and $\coord_{\alpha}(\gamma_{\max})=1$.

\begin{proposition}\label{pro:nicebparab}
The set $\mathcal B=\{x_{\gamma}\st \gamma\in \Pi^+\smallsetminus \lela \Theta\rira^+\}$ is a nice basis of $\mn_\Theta$.
\end{proposition}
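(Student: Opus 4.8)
The plan is to verify directly that $\mathcal B=\{x_\gamma \st \gamma\in\Pi^+\smallsetminus\lela\Theta\rira^+\}$ satisfies the definition of a nice basis, namely that each bracket $[x_\gamma,x_\delta]$ is a multiple of a single basis element, and each $x_\gamma\hook d x^\delta$ is a multiple of a single element of the dual basis. The first condition is immediate from classical structure theory of semisimple Lie algebras: $[\mg_\gamma,\mg_\delta]\subseteq\mg_{\gamma+\delta}$, and since all root spaces are one-dimensional, $[x_\gamma,x_\delta]$ is a scalar multiple of $x_{\gamma+\delta}$ when $\gamma+\delta$ is a root (and is $0$ otherwise); moreover $\gamma+\delta\in\Pi^+$ and, since $o(\gamma+\delta)=o(\gamma)+o(\delta)>0$, we have $\gamma+\delta\in\Pi^+\smallsetminus\lela\Theta\rira^+$, so $x_{\gamma+\delta}\in\mathcal B$. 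Hence the structure constants $c^{\gamma\delta}_\eta$ are nonzero only for $\eta=\gamma+\delta$.

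Next I would translate the second (dual) condition into the first. The key observation is that for a nilpotent Lie algebra with basis $\{e_i\}$ and dual basis $\{e^i\}$, one has $de^k=-\sum_{i<j}c_{ij}^k\, e^i\wedge e^j$ where $[e_i,e_j]=\sum_k c_{ij}^k e_k$, so that $e_i\hook de^k=-\sum_j c_{ij}^k e^j$. Thus $e_i\hook de^k$ is a multiple of a single $e^j$ precisely when, for fixed $i$ and $k$, there is at most one index $j$ with $c_{ij}^k\neq 0$. In our setting, writing indices as roots, this says: for fixed $\gamma,\eta\in\Pi^+\smallsetminus\lela\Theta\rira^+$, there is at most one $\delta$ with $c^{\gamma\delta}_\eta\neq 0$. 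But by the bracket analysis above, $c^{\gamma\delta}_\eta\neq 0$ forces $\eta=\gamma+\delta$, i.e.\ $\delta=\eta-\gamma$ is uniquely determined. Hence the dual nice condition holds automatically, and in fact the associated diagram satisfies the no-multiple-arrows and labelling conditions \ref{enum:condNice1}, \ref{enum:condNice2} as a formal consequence of one-dimensionality of root spaces.

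I would also note explicitly that $\mn_\Theta$ is nilpotent — this is already recorded in the excerpt via the lower central series description~\eqref{eq.lcs} — so the diagram is acyclic and the setup of Section~\ref{sec:nice} applies; conditions \ref{enum:condNice3} and \ref{enum:condNice4} then follow from skew-symmetry and the Jacobi identity exactly as in the general discussion after the definition of nice diagram. (Alternatively one can just cite that every nice basis of a nilpotent Lie algebra yields a nice diagram.)

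\textbf{Main obstacle.} There is essentially no deep obstacle here: the statement is a bookkeeping consequence of one-dimensionality of root spaces together with additivity of the order function. The only point requiring a little care is the bijection between the two halves of the nice-basis condition — making sure that $e_i\hook de^k$ being a multiple of a single $e^j$ really is equivalent to the uniqueness of $\delta$ with $\gamma+\delta=\eta$, and in particular that one must fix the \emph{pair} $(\gamma,\eta)$ (source of bracket and target) rather than $(\gamma,\delta)$. Once this translation is set up correctly, the proof is a couple of lines.
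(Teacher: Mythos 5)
Your proposal is correct and follows essentially the same route as the paper: one-dimensionality of root spaces gives that each bracket is a multiple of a single root vector (the paper's equation for $[x_\gamma,x_\delta]$), and the dual condition is reduced to the observation that $dx^\gamma$ only involves $x^\delta\wedge x^\rho$ with $\delta+\rho=\gamma$, so $x_\delta\hook dx^\gamma$ is a multiple of $x^{\gamma-\delta}$. Your extra remark that $o(\gamma+\delta)=o(\gamma)+o(\delta)>0$, ensuring $x_{\gamma+\delta}\in\mathcal B$, is a small detail the paper leaves implicit, but otherwise the arguments coincide.
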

\begin{proof} The set $\mcB$ is clearly a basis of $\mnT$; moreover, the Lie bracket of $x_\gamma,x_\delta\in \mathcal B$ is given by
\begin{equation}\label{eq:bracket}
[x_\gamma,x_\delta]=
\begin{cases}
m_{\gamma,\delta}\, x_{\gamma+\delta}\neq 0&\mbox{if }\gamma+\delta\in \Pi^+\\
0&\mbox{if }\gamma+\delta\notin \Pi^+,
\end{cases}
\end{equation}
with $m_{\gamma,\delta}\in \R$,
so it is always a multiple of a single element in $\mathcal B$. Denote by $x^\gamma$ the elements in the dual basis of $\mcB$; then,
\[
dx^\gamma\in \Span{x^\delta\wedge x^\rho: \delta+\rho=\gamma}\subset \Lambda^2\mnT^*.
\]
This implies that for any $x_\delta$ in $\mcB$,
$x_\delta \hook \ dx^\gamma$ is either zero or a multiple of $x_{\gamma-\delta}$. Hence, $\mcB$ is a nice basis of $\mnT$.
\end{proof}

Let $\Delta$ denote the nice diagram associated to the nice basis $\mcB$ of $\mn_\Theta$ given by root vectors.
Order two permutations of $N(\Delta)$ are in one-to-one correspondence with order two permutations of $\Pi^+\smallsetminus \lela \Theta\rira^+$. Indeed, the nodes of $\Delta$ are the root vectors $x_\gamma$, so given a permutation $\sigma$ of $N(\Delta)$ and $\gamma, \delta\in \Pi^+\smallsetminus \lela\Theta\rira^+$, we set $\sigma(\gamma)=\delta$ if and only if $\sigma(x_\gamma)=x_\delta$. We will say that an order two permutation of $\gamma\in \Pi^+\smallsetminus \lela\Theta\rira^+$ is arrow-breaking when so is its corresponding permutation of $N(\Delta)$.

\begin{proposition}\label{pro:sigmaroot}
An order two permutation $\sigma$ of $\Pi^+\smallsetminus \lela \Theta\rira^+$ is arrow-breaking if and only if for any  $\gamma, \delta\in \Pi^+\smallsetminus \lela \Theta\rira^+$,
\begin{enumerate}
\item $\gamma+\delta\in \Pi^+\Rightarrow \sigma(\gamma)+\sigma(\delta)\notin\Pi^+$,
\item $\delta-\gamma\in \Pi^+\smallsetminus\lela \Theta\rira^+\Rightarrow \sigma(\delta)-\sigma(\gamma)\notin\Pi^+\smallsetminus\lela \Theta\rira^+$.
\end{enumerate}
\end{proposition}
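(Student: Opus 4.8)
The plan is to unwind Definition~\ref{def:arrowbreaking} directly, translating its combinatorial statements about arrows of $\Delta$ into statements about sums and differences of roots by means of~\eqref{eq:bracket}. The only point needing attention is the asymmetric role of $\lela\Theta\rira^+$.

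The first step is a dictionary. By~\eqref{eq:bracket}, a pair $(\gamma,\delta)$ of elements of $\Pi^+\smallsetminus\lela\Theta\rira^+$ determines an arrow of $\Delta$ with source $x_\gamma$ and label $x_\delta$ if and only if $\gamma+\delta\in\Pi^+$, and in that case the destination is $x_{\gamma+\delta}$; indeed $o(\gamma+\delta)=o(\gamma)+o(\delta)\geq 2$, so $\gamma+\delta\in\Pi^+$ forces $\gamma+\delta\in\Pi^+\smallsetminus\lela\Theta\rira^+$, that is, the destination is automatically a node, and the restriction ``$\notin\lela\Theta\rira^+$'' is redundant for sums. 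Equivalently: for nodes $x_\gamma,x_\delta$, the node $x_\gamma$ has an outgoing arrow with label $x_\delta$ if and only if $\gamma+\delta\in\Pi^+$. Dually, for nodes $x_\rho,x_\delta$, the node $x_\rho$ has an incoming arrow with label $x_\delta$ if and only if $\rho-\delta\in\Pi^+\smallsetminus\lela\Theta\rira^+$; here the restriction ``$\notin\lela\Theta\rira^+$'' is essential, since $\rho-\delta$ can be a positive root of order $0$.

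With the dictionary in place, both implications follow by literal translation, using that $\sigma$ permutes $\Pi^+\smallsetminus\lela\Theta\rira^+$. Applied to a node $x_\gamma$ and the label $x_\delta$ of an outgoing arrow at $x_\gamma$, the second clause of Definition~\ref{def:arrowbreaking} says $\gamma+\delta\in\Pi^+\Rightarrow\sigma(\gamma)+\sigma(\delta)\notin\Pi^+$, which, quantified over all ordered pairs of nodes, is condition~(1). Applied to a node $x_\delta$ (as destination) and the label $x_\gamma$ of an incoming arrow at $x_\delta$, the first clause says $\delta-\gamma\in\Pi^+\smallsetminus\lela\Theta\rira^+\Rightarrow\sigma(\delta)-\sigma(\gamma)\notin\Pi^+\smallsetminus\lela\Theta\rira^+$, which is condition~(2). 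Since order-two permutations of $N(\Delta)$ and of $\Pi^+\smallsetminus\lela\Theta\rira^+$ correspond bijectively, as observed just before the statement, this gives the claimed equivalence.

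No serious obstacle is expected: the proposition is a faithful restatement of the definition in root-theoretic language. The one pitfall to avoid is symmetrizing away the ``$\smallsetminus\lela\Theta\rira^+$'' in condition~(2) --- this is legitimate for sums of roots of positive order, as explained above, but not for differences.
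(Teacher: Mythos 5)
Your proof is correct and takes essentially the same route as the paper's: the paper writes $P_\Delta$ and $Q_\Delta$ in root-theoretic terms and invokes Lemma~\ref{lemma:arrowbreaking}, which is just the polynomial restatement of Definition~\ref{def:arrowbreaking} that you unwind directly via the dictionary ``outgoing arrow $\Leftrightarrow$ sum of roots, incoming arrow $\Leftrightarrow$ difference in $\Pi^+\smallsetminus\lela\Theta\rira^+$''. Your explicit observation that the restriction $\smallsetminus\lela\Theta\rira^+$ is redundant for sums (orders being additive and positive) but essential for differences is precisely the point the paper's displayed formulas leave implicit.
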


\begin{proof}
The polynomials $P_\Delta$ and $Q_\Delta$ in~\eqref{eqn:PQ} can be easily described in terms of the  root system. Indeed, from~\eqref{eq.bracket} it is clear that
\begin{equation}
P_\Delta =
\prod_{
\substack{
\gamma,\delta\ \in\ \Pi^+\smallsetminus \lela\Theta\rira^+\\
\gamma+\delta\ \in\ \Pi^+
}}
({\mathrm{x}}_\gamma+{\mathrm{x}}_\delta),\qquad
Q_\Delta=\prod_{
\substack{
\gamma,\rho\ \in\ \Pi^+\smallsetminus \lela\Theta\rira^+\\
\rho-\gamma\ \in\ \Pi^+\smallsetminus \lela\Theta\rira^+
}} (1+{\mathrm{x}}_\gamma-{\mathrm{x}}_\rho).
\end{equation}
The result follows from this description of $P_\Delta$ and $Q_\Delta$ and Lemma~\ref{lemma:arrowbreaking}.
\end{proof}
According to Remark~\ref{remark:structureconstantsdonotmatter}, we do not make use of the actual structure constants of the parabolic nilradical to construct Ricci-flat metrics. Indeed, any arrow-breaking $\sigma$ as in Proposition~\ref{pro:sigmaroot} determines such a metric (or possibly a family, see Proposition~\ref{prop:family}) on every Lie algebra with the same nice diagram.
\begin{example}
Let $\mn_\emptyset$ be associated to $G_2$ with $\Theta=\emptyset$.  The system of positive roots is $\Pi^+=\{\alpha_1,\alpha_2,\alpha_1+\alpha_2,2\alpha_1+\alpha_2,3\alpha_1+\alpha_2,3\alpha_1+2\alpha_2\}$, with the maximal root appearing last. Following the above order of roots, we can chose a basis $\{e_1,\dots,e_6\}$ of $\mn_\emptyset$ such that each $e_i$ spans a root space $\g_\gamma$; this gives the nilpotent Lie algebra
\[
\texttt{64321:3}:\quad (0,0,-e^{12},e^{13},e^{14},e^{25}+e^{34}).
\]
There are exactly two arrow-breaking involutions of $\mn_\emptyset$, namely,  $\sigma$ which fixes $\alpha_2$ and $2\alpha_1+\alpha_2$ and satisfies
\[\sigma(\alpha_1)=3\alpha_1+2\alpha_2,\quad \sigma(\alpha_1+\alpha_2)=3\alpha_1+\alpha_2, \]
and $\tilde\sigma$ fixing $\alpha_1+\alpha_2$ and $3\alpha_1+\alpha_2$ and verifying
\[\tilde\sigma(\alpha_1)=3\alpha_1+2\alpha_2,\quad \tilde\sigma(\alpha_2)=2\alpha_1+\alpha_2. \]
Using the basis $\{e_1,\dots,e_6\}$, we can write $\sigma=(1\,6)(3\,5)$ and $\tilde{\sigma}=(1\,6)(2\,4)$.

The Ricci-flat metrics they induce are generically not flat. In fact, equation~\eqref{eqn:scnice} applied to a $\sigma$-diagonal metric $g$ gives:
\[
\lela R(e_1,e_5)e_1,e_5\rira=\frac{(g_1+g_3)^2}{4 g_2}
\]
showing that for $g_1\neq-g_3$ the metric is nonflat. For $g_1=-g_3$ an easy computation shows that the Riemann curvature is zero.

For a $\sigma$-diagonal metric $\tilde{g}$ induced by $\tilde{\sigma}$ we can use again formula~\eqref{eqn:scnice} applied to $e_1,e_5$ to compute directly:
\[
\lela\tilde{R}(e_1,e_5)e_1,e_5\rira=\frac{\tilde{g}_{1} \tilde{g}_{5}}{2 \tilde{g}_2}.\]
The parameters $\tilde{g}_{1}, \tilde{g}_{2},\tilde{g}_5$ are nonzero by definition, so these  metrics are not flat.
\end{example}

\subsection{Type A}
Consider the split Lie algebra of type $A$,  $\mgg=\mathfrak{sl}(n+1,\R)$ with $n\geq 1$. The set of positive roots is $\Pi^+=\{\eps_i-\eps_j\st 1\leq i<j\leq n+1\}$ which is spanned by the set of simple roots $\Sigma=\{\eps_i-\eps_{i+1}\st i=1, \ldots, n\}$. In fact,
\[
\eps_i-\eps_j= \sum_{l=i}^{j-1}\eps_l-\eps_{l+1}.
\]
Notice that if $i<j$ and $s<t$, $\eps_i-\eps_j+\eps_s-\eps_t\in \Pi^+$ if and only if $j=s$ or $i=t$.

For each $1\leq i <j\leq n+1$, the root space corresponding to $\eps_i-\eps_j$ is spanned by the matrix $E_{ij}\in \mgg$ with $ij$ entry equal to $1$ and all others equal to zero. The nilpotent Lie algebra associated to $\Theta=\emptyset$ is the Lie algebra of real strict upper triangular square matrices of size $n+1$.

Suppose $n=2k+1$ is odd and fix $\Theta=\{\alpha_{2i}\st i=1,\ldots,k\}\subset\Sigma$. Then $\lela \Theta\rira^+=\Theta$, since the roots in $\Theta$ are mutually orthogonal.
We will present a nice basis of $\mn_\Theta$ constituted by root vectors. The set $\Pi^+\smallsetminus \Theta$ of positive roots corresponding to $\mn_\Theta$ is
\begin{equation}
\label{eq:rootsa}
\{\eps_i-\eps_j\st 1\leq i, j\leq n+1, i+2\leq j \}\cup \{\eps_{2i-1}-\eps_{2i}\st i=1,\ldots,  k+1\}.
\end{equation}
For $i=1, \ldots,n+1$, set $h(i)=i+1$ if $i$ is odd and $h(i)=i+2$ otherwise;
$\mn_\Theta$ is the subalgebra of $(n+1)\times (n+1)$ upper triangular matrices such that for each row $i$, the possibly non-zero entries are in position $(i,j)$, with $j\geq h(i)$.
Notice that $\mn_\Theta$ is $k+1$-step nilpotent.
For instance, for $k=1$, $k=2$ the matrices in $\mn_\Theta$ have the following shapes:
\[
k=1 : \left(\begin{matrix}
0&\star &\star &\star \\
0&0 &0 &\star  \\
0&0 &0 &\star  \\
0&0 &0 &0
\end{matrix}\right)
\qquad
k=2 : \left(\begin{matrix}
0&\star &\star &\star &\star &\star \\
0&0 &0 &\star &\star &\star \\
0&0 &0 &\star &\star &\star \\
0&0 &0 &0 &0 &\star \\
0&0 &0 &0 &0 &\star \\
0&0 &0 &0 &0 &0
\end{matrix}\right)
\]

We will define an order two permutation on the set of positive roots corresponding to $\mn_\Theta$. For each $i=1, \ldots, n$ the set ${\operatorname{Ind}}_i=\{h(i),h(i)+1,\dots, n+1\}$ has an odd number of elements and ${\operatorname{Ind}}_{2l}={\operatorname{Ind}}_{2l+1}$ for $l=1, \ldots, k$.  Notice that $\eps_i-\eps_j\in \Pi^+\smallsetminus \Theta$ if and only if $j\in {\operatorname{Ind}}_i$.

Denote by ${\mathrm{s}}_i:{\operatorname{Ind}}_i\to {\operatorname{Ind}}_i$ the symmetry with respect to the mid-element; explicitly, ${\mathrm{s}}_i(j)=n+1+h(i)-j$. Define $\sigma$ on $\Pi^+\smallsetminus \Theta$ as follows.
\begin{equation}\label{eqn:sigmaA}
\sigma(\eps_i-\eps_j)=\begin{cases}
\eps_1-\eps_{{\mathrm{s}}_i(j)}&\mbox{ if }i=1,\\
\eps_{i-1}-\eps_{{\mathrm{s}}_i(j)} &\mbox{ if }i \mbox{ is even},\\
\eps_{i+1}-\eps_{{\mathrm{s}}_i(j)} &\mbox{ if }i>1 \mbox{ is odd}.
\end{cases}
\end{equation}
This permutation preserves the set $\{\eps_1-\eps_j\st j=1, \ldots, n+1\}$ and interchanges $\{\eps_{2l}-\eps_j\st j\in {\operatorname{Ind}}_{2l}\}$ with $\{\eps_{2l+1}-\eps_j \st j\in {\operatorname{Ind}}_{2l+1}\}$, for $l=1,\ldots, k+1$. Moreover, it reverses the natural order, in the sense that if $j<t$ belong to ${\operatorname{Ind}}_i$, then ${\mathrm{s}_i}(j)>{\mathrm{s}_i}(t)$.
The only root fixed by $\sigma$ is $\eps_1-\eps_{(n+1)/2}$.

\begin{proposition}
The permutation $\sigma$ defined by~\eqref{eqn:sigmaA} is arrow-breaking and therefore every $\sigma$-diagonal metric~\eqref{eqn:sigmametric} in $\mn_\Theta$ is Ricci-flat. These metrics are not flat.
\end{proposition}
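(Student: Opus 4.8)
The plan is to verify the two conditions in Proposition~\ref{pro:sigmaroot} directly, using the explicit description of addition of roots in type $A$: for $1\le i<j\le n+1$ and $1\le s<t\le n+1$, the sum $(\eps_i-\eps_j)+(\eps_s-\eps_t)$ lies in $\Pi^+$ if and only if $j=s$ or $i=t$, in which case the sum is $\eps_i-\eps_t$ (resp.\ $\eps_s-\eps_j$); and the difference $(\eps_s-\eps_t)-(\eps_i-\eps_j)$ lies in $\Pi^+$ exactly when $i=s,\ t<j$ or $j=t,\ i<s$. The key structural features of $\sigma$ to exploit are: (a) $\sigma$ shifts the first index by $\pm1$ according to its parity — even $i\mapsto i-1$, odd $i>1\mapsto i+1$, $1\mapsto 1$ — so the first index of $\sigma(\gamma)$ and the first index of $\gamma$ always lie in the same pair $\{2l,2l+1\}$ (or both equal $1$); (b) $\sigma$ reverses the natural order on each ${\operatorname{Ind}}_i$, so it is strictly order-reversing on second indices within a fixed block; and (c) the blocks ${\operatorname{Ind}}_{2l}={\operatorname{Ind}}_{2l+1}$ are intervals $\{h(i),\dots,n+1\}$ of odd length.

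For condition~(1): suppose $\gamma=\eps_i-\eps_j$, $\delta=\eps_s-\eps_t$ with $\gamma+\delta\in\Pi^+$. Treat the two cases $j=s$ and $i=t$. In the case $j=s$ (so $s$ is "glued" to $j$), the first index of $\sigma(\gamma)$ is in $\{i-1,i,i+1\}$ and the second index of $\sigma(\gamma)$ is ${\mathrm s}_i(j)$; the first index of $\sigma(\delta)$ is in $\{s-1,s,s+1\}=\{j-1,j,j+1\}$ and its second index is ${\mathrm s}_s(t)$. For $\sigma(\gamma)+\sigma(\delta)$ to be a positive root, the second index of $\sigma(\gamma)$ must equal the first index of $\sigma(\delta)$, or the first index of $\sigma(\gamma)$ must equal the second index of $\sigma(\delta)$. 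I would show both are impossible by a parity/size count: ${\mathrm s}_i(j)$ lies in ${\operatorname{Ind}}_i$ hence is $\ge h(i)>i+1$, while the first index of $\sigma(\delta)$ is $\le j+1$; since $j>i$ these can only coincide in a narrow range, and one checks the parity of ${\mathrm s}_i(j)=n+1+h(i)-j$ against the parity constraint forced on the first index of $\sigma(\delta)$ (it must equal $j\pm1$ or $j$, with the sign dictated by the parity of $j=s$) — this rules it out. The case $i=t$ is symmetric. I would also separately handle the degenerate roots $\eps_{2l-1}-\eps_{2l}$, which are the length-one "short" arrows, checking them against the formulas directly.

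For condition~(2): suppose $\delta-\gamma=\eps_s-\eps_t-(\eps_i-\eps_j)\in\Pi^+\smallsetminus\lela\Theta\rira^+$, so either $i=s$ and $j>t$, or $j=t$ and $i<s$. In the first subcase $\gamma$ and $\delta$ have the same first index $i=s$, hence $\sigma(\gamma),\sigma(\delta)$ have the same first index as well (same shift), and their second indices are ${\mathrm s}_i(j)$ and ${\mathrm s}_i(t)$; since $\sigma$ reverses order and $j>t$ we get ${\mathrm s}_i(j)<{\mathrm s}_i(t)$, so $\sigma(\delta)-\sigma(\gamma)=\eps_{{\mathrm s}_i(j)}-\eps_{{\mathrm s}_i(t)}$ which, because ${\mathrm s}_i(j)<{\mathrm s}_i(t)$ and both lie in the block ${\operatorname{Ind}}_i$, is $-$(a positive root): it is $-(\eps_{{\mathrm s}_i(j)}-\eps_{{\mathrm s}_i(t)})$, not in $\Pi^+$. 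In the second subcase $j=t$, the first indices $i<s$ satisfy that $\sigma$ shifts them to $i'\in\{i-1,i,i+1\}$, $s'\in\{s-1,s,s+1\}$, and the second index becomes ${\mathrm s}_i(j)={\mathrm s}_s(j)$ (equal, since $i,s$ are constrained to land in blocks sharing the same ${\operatorname{Ind}}$, or one argues ${\operatorname{Ind}}_i={\operatorname{Ind}}_s$ whenever $\eps_i-\eps_j$ and $\eps_s-\eps_j$ are both in $\Pi^+\smallsetminus\Theta$ — indeed $j\in{\operatorname{Ind}}_i\cap{\operatorname{Ind}}_s$ forces $h(i)=h(s)$ when the parities match up with the interval structure); then $\sigma(\delta)-\sigma(\gamma)$ has the form $\pm(\eps_{a}-\eps_{b})$ with $a,b$ close to $i,s$ and one checks via the order-reversal that $a>b$, so again the difference is the negative of a positive root. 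Once both conditions are confirmed, Proposition~\ref{pro:sigmaroot} gives that $\sigma$ is arrow-breaking, and Proposition~\ref{prop:2OrderSigmaEnhancedGiveRicciFlat} yields Ricci-flatness of every $\sigma$-diagonal metric.

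For nonflatness, the plan is to invoke the flatness criterion in Proposition~\ref{pr:CriteriaFlatness}. Since $\sigma$ fixes only $\eps_1-\eps_{(n+1)/2}$ and reverses order within blocks, for $n$ large I would exhibit an explicit pair $e_s,e_t$ of root vectors — e.g. $\gamma=\eps_1-\eps_2$ together with a suitable $\delta$ with $\gamma+\delta=\eps_1-\eps_\ell$ where $\ell$ happens to be the fixed point $(n+1)/2$ — so that condition~\ref{enum:CriteriaFlat1} applies: the arrow $e_s\xrightarrow{e_t}e_{k_1}$ exists with $k_1$ the fixed node, and there are no arrows $e_t\xrightarrow{\bullet}e_{\sigma_s}$ or $e_s\xrightarrow{\bullet}e_{\sigma_t}$ (the latter two being ruled out precisely by the arrow-breaking property combined with the index-shift structure — an outgoing arrow from $e_s=\eps_1-\eps_2$ landing on $\sigma(e_t)$ would force a sum of roots with incompatible first indices). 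One also needs the "no $k$" hypothesis of Proposition~\ref{pr:CriteriaFlatness}, i.e. no $k$ with $e_s\xrightarrow{e_k}e_{\sigma_s}$ and $e_t\xrightarrow{e_{\sigma_k}}e_{\sigma_t}$ simultaneously; since $\sigma$ is arrow-breaking, the existence of the first arrow already precludes the second (mapping an arrow by $\sigma$ never yields an arrow), so this is automatic. The main obstacle I anticipate is the bookkeeping in condition~(1), case-matching the parities of the shifted first index against ${\mathrm s}_i(j)=n+1+h(i)-j$; it is purely combinatorial but error-prone, and it may be cleaner to phrase the argument uniformly by noting that $\sigma$ maps the "$i$-th super-row" ${\operatorname{Ind}}_i$ to the "$(i\mp1)$-th" one while reversing it, and that composition of arrows corresponds to concatenating rows, which the order-reversal and index-shift jointly obstruct.
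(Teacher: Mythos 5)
Your overall route is the same as the paper's: verify the two conditions of Proposition~\ref{pro:sigmaroot}, deduce Ricci-flatness from Proposition~\ref{prop:2OrderSigmaEnhancedGiveRicciFlat}, and get nonflatness from criterion~\ref{enum:CriteriaFlat1} of Proposition~\ref{pr:CriteriaFlatness} using the arrow $\eps_1-\eps_2\xrightarrow{\ }\eps_1-\eps_{(n+1)/2}$ landing on the unique fixed root. Your condition~(1) and the first subcase of condition~(2) are essentially the paper's arguments (parity of ${\mathrm s}_i(j)=n+1+h(i)-j$ versus $j\pm1$, the bound ${\mathrm s}_j(t)\geq h(j)>i+1$, and order-reversal when the two roots share the same first index). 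However, the second subcase of condition~(2) contains a genuine error. When $\gamma$ and $\delta$ share the same \emph{second} index, say $\gamma=\eps_i-\eps_t$, $\delta=\eps_s-\eps_t$ with $\eps_s-\eps_i\in\Pi^+\smallsetminus\lela\Theta\rira^+$, you claim ${\mathrm s}_i(t)={\mathrm s}_s(t)$ on the grounds that ``$t\in\operatorname{Ind}_i\cap\operatorname{Ind}_s$ forces $h(i)=h(s)$''. This is false: the sets $\operatorname{Ind}_i=\{h(i),\dots,n+1\}$ are nested intervals, so $t$ lying in both only says $t\geq\max(h(i),h(s))$. For instance $\eps_1-\eps_{n+1}$ and $\eps_4-\eps_{n+1}$ are both roots of $\mn_\Theta$ whose difference $\eps_1-\eps_4$ is again one, yet $h(1)=2\neq 6=h(4)$ and ${\mathrm s}_1(n+1)=2\neq 6={\mathrm s}_4(n+1)$. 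In fact the hypothesis $\delta-\gamma\in\Pi^+\smallsetminus\lela\Theta\rira^+$ forces the $h$-value of the larger first index to be \emph{strictly} bigger, so the second indices of $\sigma(\gamma)$ and $\sigma(\delta)$ are always distinct --- the opposite of what your mechanism needs. Then $\sigma(\delta)-\sigma(\gamma)$ involves four (generically distinct) basis weights, it is not of the form $\pm(\eps_a-\eps_b)$, and the order-reversal comparison of first indices never gets started. The correct argument is different in nature: one shows that \emph{no} cancellation can occur --- the two second indices differ, each image being a root has distinct indices of its own, the second indices exceed every possible first index, and equality of the two first indices would force $i=s$ --- hence the difference is never a root. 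This is the step that actually carries the weight of condition~(2), and your proposal does not contain it.

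A smaller but real flaw concerns nonflatness: the ``no $k$'' hypothesis of Proposition~\ref{pr:CriteriaFlatness} is \emph{not} automatic from the arrow-breaking property. Arrow-breaking relates an arrow at a node only to arrows at its $\sigma$-image with $\sigma$-transformed label; it forbids the simultaneous arrows $e_s\xrightarrow{e_k}e_{\sigma_s}$ and $e_t\xrightarrow{e_{\sigma_k}}e_{\sigma_t}$ only when $t=\sigma_s$ (equivalently $s=\sigma_t$), not for an arbitrary pair $e_s,e_t$. In the very pair you (and the paper) use, a $k$ with $e_s\xrightarrow{e_k}e_{\sigma_s}$ does exist --- for $e_s=x_{\eps_1-\eps_2}$ one has $\sigma(\eps_1-\eps_2)=\eps_1-\eps_{n+1}$ and $e_k=x_{\eps_2-\eps_{n+1}}$ --- so you must check directly that $e_t\xrightarrow{e_{\sigma_k}}e_{\sigma_t}$ is not an arrow (it is not, since the relevant sum of roots is not a root, but this requires the explicit verification rather than the blanket claim).
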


\begin{proof}
We are going to show that $\sigma$ verifies the conditions in Proposition~\ref{pro:sigmaroot}.

First, consider $\gamma,\delta\in \Pi^+\smallsetminus \Theta$ such that $\gamma+\delta\in \Pi^+$. Without loss of generality, we may assume that $\gamma=\eps_i-\eps_j$, $\delta=\eps_j-\eps_t$. In particular, $j>1$ and
$\sigma(\delta)=\eps_{j\pm 1}-\eps_{{\mathrm{s}}_j(t)}$.  By construction $\sigma(\gamma)$ has the form $\eps_l-\eps_{{\mathrm{s}}_i(j)}$ for some $l\in\{i-1,i,i+1\}$. Then
\[
\sigma(\gamma)+\sigma(\delta)= \eps_l-\eps_{{\mathrm{s}}_i(j)}+\eps_{j\pm 1}-\eps_{{\mathrm{s}}_j(t)},
\]
and this is a root if and only if $l={\mathrm{s}}_j(t)$ or ${\mathrm{s}}_i(j)=j\pm1$. We know that ${\mathrm{s}}_i(j)=n+1+h(i)-j\neq j\pm1$ since $n$ is odd and $h(i)$ even, independently of $i$. Moreover, ${\mathrm{s}}_j(t)\in {\operatorname{Ind}}_j$ so ${\mathrm{s}}_j(t)\geq h(j)\geq j+1>i+1$ which implies ${\mathrm{s}}_j(t)\neq l$. Therefore, $\sigma(\gamma)+\sigma(\delta)$ is never a root.
\smallskip

Let $\gamma,\delta\in \Pi^+\smallsetminus \Theta$ be such that $\delta-\gamma\in \Pi^+\smallsetminus \Theta$; writing $\delta=\eps_i-\eps_t$, two situations may occur:
\begin{enumerate}
\item  $\gamma=\eps_i-\eps_j$ with $ h(i)\leq j <t$, or
\item   $\gamma=\eps_j-\eps_t$ with $h(i)\leq j$, $h(j)\leq t$.
\end{enumerate}

In the first case, independently of $i$ being odd or even,
\[
\sigma(\delta)-\sigma(\gamma)=\eps_{{\mathrm{s}}_i(j)}-\eps_{{\mathrm{s}}_i(t)},
\]
with ${\mathrm{s}}_i(t)<{\mathrm{s}}_i(j)$ since $j<t$. Hence, $\sigma(\delta)-\sigma(\gamma)$ is a negative root (and thus not in $\Pi^+$).

In the second case, $\sigma\delta=\eps_l-\eps_{{\mathrm{s}}_i(t)}$ and $\sigma\gamma=\eps_r-\eps_{{\mathrm{s}}_j(t)}$ for some $l$ and $r$ depending on $i$ and $j$, respectively. By construction $ {\mathrm{s}}_i(t) \neq {\mathrm{s}}_j(t) $, so
\[
\sigma(\delta)-\sigma(\gamma)=\eps_l-\eps_{{\mathrm{s}}_i(t)} - \eps_r+\eps_{{\mathrm{s}}_j(t)}
\]
is only a root when $l=r$, which by the definition of $\sigma$ implies $i=j$, against the hypothesis. Therefore, $\sigma(\delta)-\sigma(\gamma)$ is never a root.

We conclude that $\sigma$ verifies the conditions of Proposition~\ref{pro:sigmaroot} and is therefore arrow-breaking, inducing on $\mn_\Theta$ Ricci-flat metrics. These metrics are not flat because of criterion~\ref{enum:CriteriaFlat1} in Proposition~\ref{pr:CriteriaFlatness} applied to $e_s=x_{\eps_1-\eps_2}$, $e_t=x_{\eps_2-\eps_{\frac{n+1}2}}$, in which case $e_{k_1}=x_{\eps_1-\eps_{\frac{n+1}2}}$ is fixed by $\sigma$.
\end{proof}

\subsection{Type B}

The split real Lie algebra of type $B$ is $\mg=\so(n,n+1)$, $n\geq 2$, and has a system of positive roots given by
\[\{\eps_i\pm\eps_j\st 1\leq i<j\leq n \}\cup \{\eps_i \st 1\leq i \leq n\}.\] The simple roots in the system are $\eps_n$ and $\eps_i-\eps_{i+1}$ for $i=1, \ldots, n$.

For any $n\geq 3$, consider $\Theta=\{\eps_i-\eps_{i+1}\st 1\leq i\leq n-2\}$; the positive roots generated by this set are $\lela \Theta \rira^+=\{\eps_i-\eps_j\st 1\leq i<j\leq n-1\}$. Thus, $\mn_\Theta$ has a basis of root vectors $x_{\alpha}$ where $\alpha$ runs in the set of roots $\Pi^+\smallsetminus \lela \Theta\rira^+$, namely $\alpha$ is one of the following:
\begin{equation}
\label{eqn:rootB}
\begin{aligned}
\eps_i,\;  \eps_i-\eps_n, \; \eps_i+\eps_n, &\quad  1\leq i \leq n-1, \\
\eps_n,\;   \eps_i+\eps_j,&\quad 1\leq i <j\leq n-1.
\end{aligned}
\end{equation}

We shall construct an arrow-breaking involution $\sigma$ of $\Pi^+\smallsetminus \lela \Theta\rira^+$.
For $n=2$, $B_2$ is isomorphic to $C_2$ and this case will be treated in Example~\ref{ex:B2C2}, so we assume $n\geq 3$ for the rest of the section.

We claim that it is possible to define an arrow-breaking involution $\sigma$ satisfying
\begin{equation}
\label{eq:sigmaprop}
\left\{
\begin{array}{l}
\sigma (\eps_i+\eps_j)=\eps_h+\eps_k\Rightarrow\{i,j\}\cap \{h,k\}=\emptyset \mbox{ and }\\
\sigma (\eps_i+\eps_j)=\eps_h+\eps_n\Rightarrow i\neq h\neq j.
\end{array}\right.
\end{equation}
To this purpose, independently of $n$, we define
\[
\sigma(\eps_i-\eps_n)=\eps_i, \quad i=1, \ldots, n-1.
\]
For $n=3,4$ we set further
\begin{equation*}
n=3:\left\{
\begin{array}{l}
\sigma(\eps_1+\eps_2)=\eps_3,\\
\sigma (\eps_i+\eps_3)=\eps_i+\eps_3,
\end{array}\right.
\quad
n=4:\left\{
\begin{array}{l}
\sigma(\eps_1+\eps_2)=\eps_4,\\
\sigma (\eps_1+\eps_3)=\eps_2+\eps_4,\\
\sigma (\eps_2+\eps_3)=\eps_1+\eps_4,\\
\sigma (\eps_3+\eps_4)=\eps_3+\eps_4.
\end{array}\right.
\end{equation*}
Assume $n\geq 5$, choose $\sigma(\eps_i+\eps_n)=\eps_i+\eps_n$ for $i=4, \ldots, n-1$ and define $\sigma$ on
\[
\mathcal{S}:=\{\eps_i+\eps_j\st 1\leq i<j\leq n-1\}\cup\{\eps_n,\eps_1+\eps_n,\eps_2+\eps_n,\eps_3+\eps_n\}
\]
as follows.

Suppose that $n-1$ is even, then the indexes $i,j$ of the roots corresponding to the center of $\mn_\Theta$, namely  $\eps_i+\eps_j$ with $1\leq i<j\leq n-1$,  can be displayed in an upper triangular matrix as follows:
\begin{equation}\label{eqn:F1}
\begin{pmatrix}
\cline{3-6}\cline{8-9}
* & \rn{12}{_{1,2}} & \multicolumn{1}{|c}{\rn{13}{_{1,3}}} & \multicolumn{1}{c|}{\rn{14}{_{1,4}}} & \multicolumn{1}{|c}{\rn{15}{_{1,5}}} & \multicolumn{1}{c|}{\rn{16}{_{1,6}}}
& \ldots & \multicolumn{1}{|c}{\rn{1n2}{_{1,n-2}}}& \multicolumn{1}{c|}{\rn{1n1}{_{1,n-1}}}\\[5pt]
& * & \multicolumn{1}{|c}{\rn{23}{_{2,3}}} & \multicolumn{1}{c|}{\rn{24}{_{2,4}}} &  \multicolumn{1}{|c}{\rn{25}{_{2,5}}} & \multicolumn{1}{c|}{\rn{26}{_{2,6}}} & \ldots & \multicolumn{1}{|c}{\rn{2n2}{_{2,n-2}}}& \multicolumn{1}{c|}{\rn{2n1}{_{2,n-1}}} \\
\cline{3-6}\cline{8-9}
&  & * & \rn{34}{_{3,4}} & \multicolumn{1}{|c}{\rn{35}{_{3,5}}} & \multicolumn{1}{c|}{\rn{36}{_{3,6}}}&  & \multirow{2}{*}{\vdots} & \multirow{2}{*}{\vdots} \\[5pt]
& & & * & \multicolumn{1}{|c}{\rn{45}{_{4,5}}} & \multicolumn{1}{c|}{\rn{46}{_{4,6}}}\\
\cline{5-6}
& & & & \ddots & \ddots & \ddots & \vdots & \vdots\\
\cline{8-9}
& & & & & * & _{n-4,n-3} &
\multicolumn{1}{|c}{\rn{n4n2}{_{n-4,n-2}}} & \multicolumn{1}{c|}{\rn{n4n1}{_{n-4,n-1}}}\\[5pt]
& & & & & & * & \multicolumn{1}{|c}{\rn{n3n2}{_{n-3,n-2}}} & \multicolumn{1}{c|}{\rn{n3n1}{_{n-3,n-1}}}\\
\cline{8-9}
& & & & & & & * & _{n-2,n-1}\\
& & & & & & & & *
\end{pmatrix}
\begin{tikzpicture}[overlay,remember picture]
   \draw [stealth-stealth] (13) -- (24);
   \draw [stealth-stealth] (14) -- (23);
   \draw [stealth-stealth] (15) -- (26);
   \draw [stealth-stealth] (25) -- (16);
   \draw [stealth-stealth] (1n2) -- (2n1);
   \draw [stealth-stealth] (2n2) -- (1n1);
   \draw [stealth-stealth] (35) -- (46);
   \draw [stealth-stealth] (45) -- (36);
   \draw [stealth-stealth] (n4n2) -- (n3n1);
   \draw [stealth-stealth] (n3n2) -- (n4n1);
\end{tikzpicture}
\end{equation}

\noi Define $\sigma(\eps_i+\eps_j)=\eps_h+\eps_k$ if $i,j$ and $h,k$ are joined by an arrow in~\eqref{eqn:F1}, and set further $\sigma(\eps_1+\eps_2)=\eps_n$. It remains to define $\sigma$ on the $(n-3)/2$ elements of the form $\eps_{2l-1}+\eps_{2l}$ and $\eps_1+\eps_n$, $\eps_2+\eps_n$, $\eps_3+\eps_n$.

If $(n-3)/2$ is even, choose a $\sigma$ that fixes $\eps_i+\eps_n$ for $i=1, 2,3$ and acts on the elements $\eps_{2l-1}+\eps_{2l}$ without fixing any point. Otherwise, declare $\sigma$ to interchange $\eps_3+\eps_4$ with $\eps_1+\eps_n$, to fix $\eps_2+\eps_n$ and $\eps_2+\eps_n$ and to act on the remaining $\eps_{2l-1}+\eps_{2l}$ (an even number) without fixing any element.

Similarly, when $n-1$ is odd, the indexes of the roots $\eps_i+\eps_j$ with $1\leq i<j\leq n-1$ can be displayed as follows:
\begin{equation}\label{eqn:F2}
\begin{pmatrix}
\cline{2-5}\cline{7-8}
* & \multicolumn{1}{|c}{\rn{12}{_{1,2}}} & \multicolumn{1}{c|}{_{1,3}} & \multicolumn{1}{|c}{\rn{14}{_{1,4}}} & \multicolumn{1}{c|}{\rn{15}{_{1,5}}} & \ldots & \multicolumn{1}{|c}{\rn{1n2}{_{1,n-2}}}& \multicolumn{1}{c|}{\rn{1n1}{_{1,n-1}}}\\[5pt]
\cline{2-2}
& * & \multicolumn{1}{|c|}{\rn{23}{_{2,3}}} & \multicolumn{1}{|c}{\rn{24}{_{2,4}}} &  \multicolumn{1}{c|}{\rn{25}{_{2,5}}} &  \ldots & \multicolumn{1}{|c}{\rn{2n2}{_{2,n-2}}}& \multicolumn{1}{c|}{\rn{2n1}{_{2,n-1}}} \\
\cline{3-5}\cline{7-8}
&  & * & \multicolumn{1}{|c}{\rn{34}{_{3,4}}} & \multicolumn{1}{c|}{\rn{35}{_{3,5}}} &   & \multirow{2}{*}{\vdots} & \multirow{2}{*}{\vdots} \\[5pt]
\cline{4-4}
& & & * & \multicolumn{1}{|c|}{\rn{45}{_{4,5}}} & \\
\cline{5-5}
& & & & \ddots & \ddots  & \vdots & \vdots\\
\cline{7-8}
& & & & & * & \multicolumn{1}{|c}{_{n-3,n-2}} & \multicolumn{1}{c|}{_{n-3,n-1}}\\
\cline{7-7}
& & & & & & * & \multicolumn{1}{|c|}{_{n-2,n-1}}\\
\cline{8-8}
& & & & & & & *
\end{pmatrix}
\begin{tikzpicture}[overlay,remember picture]
   \draw [stealth-stealth] (14) -- (25);
   \draw [stealth-stealth] (24) -- (15);
   \draw [stealth-stealth] (1n2) -- (2n1);
   \draw [stealth-stealth] (1n1) -- (2n2);
\end{tikzpicture}
\end{equation}

\noi Again, we set $\sigma(\eps_i+\eps_j)=\eps_h+\eps_k$ whenever $i,j$ and $h,k$ are joined by an arrow in~\eqref{eqn:F2}. If the blocks with three elements are odd in number we define $\sigma(\eps_1+\eps_3)=\eps_n$, $\sigma (\eps_1+\eps_2)=\eps_3+\eps_n$, $\sigma (\eps_2+\eps_3)=\eps_1+\eps_n$, $\sigma (\eps_1+\eps_n)=\eps_1+\eps_n$, and interchange the corresponding elements of the remaining $3$-element blocks. For instance, we set $\sigma (\eps_3+\eps_4)=\eps_5+\eps_6$, $\sigma(\eps_3+\eps_5)=\eps_5+\eps_7$, $\sigma(\eps_4+\eps_5)=\eps_6+\eps_7$, and so on. If $3$-element blocks appear in an even number, then define $\sigma(\eps_1+\eps_2)=\eps_n$, $\sigma (\eps_3+\eps_4)=\eps_1+\eps_n$, $\sigma(\eps_i+\eps_n)=\eps_i+\eps_n$, $i=2,3$ and interchange with $\sigma$ the elements of the blocks as in the other case.

It is easy to check that in both cases $\sigma$ verifies~\eqref{eq:sigmaprop}.

\begin{proposition}
\label{prop:arrowbreakingBn}
The permutation $\sigma$ defined above is an arrow-breaking permutation and therefore every $\sigma$-diagonal metric~\eqref{eqn:sigmametric} in $\mn_\Theta$ is Ricci-flat. These metrics are not flat.
\end{proposition}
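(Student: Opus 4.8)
The plan is to deduce that $\sigma$ is arrow-breaking from Proposition~\ref{pro:sigmaroot}, obtain Ricci-flatness from Proposition~\ref{prop:2OrderSigmaEnhancedGiveRicciFlat}, and prove non-flatness from Proposition~\ref{pr:CriteriaFlatness} when $n\ge4$ and from a direct use of~\eqref{eqn:scnice} when $n=3$. The first step is to extract from the rather case-laden definition of $\sigma$ the few structural features that the verification actually needs (that $\sigma$ has order two is immediate from the construction). Partition $\Pi^+\smallsetminus\lela\Theta\rira^+$ into $R_1=\{\eps_i-\eps_n\}$, $\{\eps_n\}$, $R_2=\{\eps_i\}$, $R_3=\{\eps_i+\eps_n\}$ (all with $1\le i\le n-1$) and the central set $R_4=\{\eps_i+\eps_j\st i<j\le n-1\}$. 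Then: $\sigma$ interchanges $R_1$ with $R_2$ via $\eps_i-\eps_n\leftrightarrow\eps_i$; it maps $\{\eps_n\}\cup R_3\cup R_4$ bijectively onto itself; $\sigma(\eps_n)=\eps_a+\eps_b$ with $\{a,b\}\subseteq\{1,2,3\}$, and $n-1\notin\{a,b\}$ whenever $n\ge4$; every element of $R_3$ is either fixed by $\sigma$ or sent into $R_4$; and, crucially, $\sigma$ satisfies~\eqref{eq:sigmaprop}. I will also record two elementary facts about $B_n$: a nonzero integer combination $\sum_l c_l\eps_l$ is a root only when at most two $c_l$ are nonzero, each equal to $\pm1$; and a root $\eps_h-\eps_k$ with $h\ne k$, $h,k\le n-1$, is either negative or lies in $\lela\Theta\rira^+$, hence is never in $\Pi^+\smallsetminus\lela\Theta\rira^+$.

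Next I would check the two conditions of Proposition~\ref{pro:sigmaroot}. The description of the bracket following~\eqref{eqn:rootB} shows that the only nonzero brackets of $\mnT$ are, up to signs, $[x_{\eps_i-\eps_n},x_{\eps_n}]\propto x_{\eps_i}$, $[x_{\eps_i},x_{\eps_n}]\propto x_{\eps_i+\eps_n}$, $[x_{\eps_i-\eps_n},x_{\eps_j+\eps_n}]\propto x_{\eps_i+\eps_j}$ and $[x_{\eps_i},x_{\eps_j}]\propto x_{\eps_i+\eps_j}$ with $i\ne j$; so for condition~(1) it is enough to treat the four corresponding pairs $(\gamma,\delta)$ with $\gamma+\delta\in\Pi^+$. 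Applying $\sigma$ and the structural facts, $\sigma(\gamma)+\sigma(\delta)$ always has at least three nonzero $\eps$-coordinates or a coordinate equal to $\pm2$ (for instance $\sigma(\eps_i)+\sigma(\eps_j)=\eps_i+\eps_j-2\eps_n$), hence is not a root. For condition~(2), the pairs $(\gamma,\delta)$ to consider — those with an arrow $x_\gamma\to x_\delta$ — arise from the same four bracket families, and for each one I would verify that $\sigma(\delta)-\sigma(\gamma)$ is not a root, or is a negative root, or is of the form $\eps_h-\eps_k$ with $h,k\le n-1$. Property~\eqref{eq:sigmaprop} is exactly what makes the subcases with $\delta\in R_4$ go through, since it forces the indices of $\sigma(\delta)$ to be disjoint from those of $\delta$ (or, when $\sigma(\delta)=\eps_h+\eps_n$, $h$ to be different from both indices of $\delta$). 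Once both conditions hold, $\sigma$ is arrow-breaking by Proposition~\ref{pro:sigmaroot}, and Proposition~\ref{prop:2OrderSigmaEnhancedGiveRicciFlat} then gives that every $\sigma$-diagonal metric~\eqref{eqn:sigmametric} on $\mnT$ is Ricci-flat.

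For non-flatness I would separate $n\ge4$ from $n=3$. If $n\ge4$, apply criterion~\ref{enum:CriteriaFlat1} of Proposition~\ref{pr:CriteriaFlatness} with $e_s=x_{\eps_{n-1}}$ and $e_t=x_{\eps_n}$: the bracket $[x_{\eps_{n-1}},x_{\eps_n}]$ is a nonzero multiple of $x_{\eps_{n-1}+\eps_n}$, and $\eps_{n-1}+\eps_n$ is fixed by $\sigma$ (it is an $\eps_i+\eps_n$ with $i\ge4$ when $n\ge5$, and $\eps_3+\eps_4$ is fixed by the explicit $n=4$ definition); since $\sigma(\eps_{n-1})=\eps_{n-1}-\eps_n$ and $\sigma(\eps_n)=\eps_a+\eps_b$ with $a,b\le3$ and $n-1\notin\{a,b\}$, neither $x_{\eps_n}\to x_{\sigma(\eps_{n-1})}$ nor $x_{\eps_{n-1}}\to x_{\sigma(\eps_n)}$ is an arrow (their would-be labels $\eps_{n-1}-2\eps_n$ and $\eps_a+\eps_b-\eps_{n-1}$ are not roots), and there is no $k$ at all with $x_{\eps_{n-1}}\xrightarrow{x_k}x_{\sigma(\eps_{n-1})}$; hence the hypotheses of~\ref{enum:CriteriaFlat1} are met and the metric is non-flat. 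The case $n=3$ is genuinely special, because there $\sigma(\eps_3)=\eps_1+\eps_2$ does involve the index $n-1=2$ and both criteria of Proposition~\ref{pr:CriteriaFlatness} fail; instead I would compute directly from~\eqref{eqn:scnice} with $e_s=x_{\eps_1-\eps_3}$, $e_t=x_{\eps_3}$. Using $[e_s,e_t]\propto x_{\eps_1}$, $[e_s,x_{\eps_2+\eps_3}]\propto x_{\eps_1+\eps_2}=x_{\sigma(\eps_3)}$ and the fact that $\eps_2+\eps_3$ is $\sigma$-fixed, all but one term in~\eqref{eqn:scnice} vanish, leaving $\lela R(e_s,e_t)e_s,e_t\rira=\tfrac14\lambda^2 g_{\eps_3}^2 g_{\eps_2+\eps_3}^{-1}$ with $\lambda\ne0$ a structure constant and $g_{\eps_3},g_{\eps_2+\eps_3}$ among the nonzero parameters of the metric; this is nonzero, so the metric is not flat.

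The main obstacle I anticipate is not a single hard step but the bookkeeping: the definition of $\sigma$ branches according to the parity of $n$ and the number of two- and three-element blocks displayed in~\eqref{eqn:F1} and~\eqref{eqn:F2}, and one must be sure that the short list of structural properties above (together with~\eqref{eq:sigmaprop}) really holds uniformly across all those branches; once that is in hand, the case analysis feeding Proposition~\ref{pro:sigmaroot} is entirely routine. The secondary delicate point is the low-dimensional behaviour of the non-flatness argument: $n=3$ escapes both criteria of Proposition~\ref{pr:CriteriaFlatness} and must be handled by the explicit curvature computation, and one should double-check that $n=4$ really does fall under criterion~\ref{enum:CriteriaFlat1}.
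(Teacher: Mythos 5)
Your proposal is correct and follows essentially the same route as the paper: verify the two conditions of Proposition~\ref{pro:sigmaroot} by a case analysis on root types using~\eqref{eq:sigmaprop} and the structure of $\sigma$, deduce Ricci-flatness from Proposition~\ref{prop:2OrderSigmaEnhancedGiveRicciFlat}, and obtain non-flatness from criterion~\ref{enum:CriteriaFlat1} for $n\geq 4$ together with a direct evaluation of~\eqref{eqn:scnice} for $n=3$. The only deviations are in the choice of witnesses — you use $(x_{\eps_{n-1}},x_{\eps_n})$ uniformly for $n\geq4$ where the paper uses $(x_{\eps_4},x_{\eps_n})$ for $n\geq5$ and $(x_{\eps_3},x_{\eps_4})$ for $n=4$, and for $n=3$ you take $(x_{\eps_1-\eps_3},x_{\eps_3})$ instead of the paper's $(x_{\eps_1},x_{\eps_3})$ — and all of these choices check out.
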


\begin{proof}
Let $\gamma,\delta$ be roots in $\Pi^+\smallsetminus \lela \Theta\rira^+$ such that $\gamma+\delta$ is a root. From~\eqref{eqn:rootB} one can describe all the possibilities for $\gamma$ and $\delta$ and it is not hard to check that $\sigma(\gamma)+\sigma(\delta)$ is never a root, because of how we constructed $\sigma$.

Let now $\gamma,\delta\in \Pi^+\smallsetminus \lela \Theta\rira^+$ be such that $\delta-\gamma$ is a root in  $\Pi^+\smallsetminus \lela \Theta\rira^+$. Again, from~\eqref{eqn:rootB} one can describe all possibilities for $\delta$ and $\gamma$. If $\delta=\eps_i$ for some $i=1, \ldots, n-1$, $\sigma(\delta)=\eps_i-\eps_n$, which cannot be written as the sum of two roots in $\Pi^+\smallsetminus \lela \Theta\rira^+$; in particular, $\sigma(\delta)-\sigma(\gamma)\notin\Pi^+\smallsetminus \lela \Theta\rira^+$.

If $\delta=\eps_i+\eps_n$ with $i\neq n$ then $\sigma(\delta)$ is either $\eps_i+\eps_n$ or $\eps_h+\eps_k$ with $1\leq h<k\leq n-1$. The roots $\gamma$ that we can subtract from $\delta$ are $\eps_i$ and $\eps_n$, whose images under $\sigma$ are $\eps_i-\eps_n$ and $\eps_1+\eps_l$ for some $l\neq n$, respectively. Therefore, $\sigma(\delta)-\sigma(\gamma)$ is not a root.

The last possibility for $\delta$ is $\delta=\eps_i+\eps_j$ with $1\leq i<j\leq n-1$. In this case $\sigma(\delta)$ is  either $\eps_n$, $\eps_h+\eps_k$ with  $1\leq h<k\leq n-1$, or $\eps_h+\eps_n$ with $1\leq  h\leq n-1$, and, in any case, $i\neq h,k\neq j$ because of~\eqref{eq:sigmaprop}.

For this $\delta$ we can choose three different roots for $\gamma$. First, $\gamma=\eps_i$ (or $\gamma=\eps_j$, which is analogous). We obtain $\sigma(\gamma)=\eps_i-\eps_n$ and $\sigma(\delta)-\sigma(\gamma)$ is not a root since $i\neq h,k\neq j$. Second, we might have $\gamma=\eps_i-\eps_n$  (or $\eps_j-\eps_n$) and then $\sigma(\gamma)=\eps_i$ which cannot be subtracted from $\sigma(\delta)$ because $i\neq h,k\neq j$. Finally, we can choose $\gamma=\eps_j+\eps_n$ being $\sigma(\gamma)$ either $\eps_j+\eps_n$ or $\eps_s+\eps_t$, but these cannot be subtracted from any of $\eps_n$, $\eps_h+\eps_k$ or $\eps_h+\eps_n$.

We have proved that $\sigma$ satisfies the conditions in Proposition~\ref{pro:sigmaroot}, so any $\sigma$-diagonal metric $\mn_\Theta$ on is Ricci-flat.

To show that these metrics are not flat for $n\geq 5$, take $e_s=x_{\eps_4}$ and $e_t=x_{\eps_n}$. Then
$x_{\eps_4+\eps_n}$ is fixed by $\sigma$ and the diagram contains the arrow
$x_{\eps_4}\xrightarrow{x_{\eps_n}}x_{\eps_4+\eps_n}$; thus, criterion~\ref{enum:CriteriaFlat1} in Proposition~\ref{pr:CriteriaFlatness} is satisfied and the metric is not flat.

For $n=3$, take $e_s=x_{\eps_1}$ and $e_t=x_{\eps_3}$; then, $[x_{\eps_1},x_{\eps_3}]=\lambda_1 x_{\eps_1+\eps_3}$ which is fixed  by $\sigma$, and $[x_{\eps_1}, x_{\eps_2}]=\lambda_2x_{\eps_1+\eps_2}$ where $\eps_2$ is not fixed by $\sigma$. Thus, equation~\eqref{eqn:scnice} implies $\lela R(e_s,e_t)e_s,e_t\rira\neq 0$ and the metric is not flat.

Finally, for $n=4$ take $e_s=x_{\eps_3}$ and $e_t=x_{\eps_4}$; then $[x_{\eps_3},x_{\eps_4}]=\lambda_1 x_{\eps_3+\eps_4}$, which is fixed  by $\sigma$. By criterion (C1) in Proposition~\ref{pr:CriteriaFlatness} we find that the metric is not flat.
\end{proof}

\begin{remark}
For large $n$, the Lie algebra $\mn_\Theta$ has a different Ricci-flat metric; indeed,
the center of the Lie algebra $\mn_\Theta$ is spanned by $x_{\eps_i-\eps_j}$, $1\leq i<j\leq n-1$ so it has dimension $(n-1)(n-2)/2$ and Proposition~\ref{prop:rs} applies. Notice that the resulting metrics are zero when restricted to the center, unlike the metrics constructed in Proposition~\ref{prop:arrowbreakingBn}.
\end{remark}

\subsection{Type C}
Consider the split Lie algebra of type $C$, namely $\g=\spg(2n,\R)$ for $n\geq 2$.
The set of positive roots is given by
\[\Pi^+=\{2\eps_i \st 1\leq i \leq n\}\cup\{\eps_i+\eps_j\st 1\leq i<j\leq n \}\cup \{\eps_i-\eps_j\st 1\leq i<j\leq n\},\]
generated by the simple roots $\Sigma=\{2\eps_n \}\cup \{\eps_i-\eps_{i+1}\st 1\leq i\leq n-1\}$.

The nonzero Lie brackets are the following:
\begin{align*}
{[x_{\eps_i-\eps_j},x_{2\eps_j}]}&=\lambda_{i,j}x_{\eps_i+\eps_j}\\
{[x_{\eps_i-\eps_j},x_{\eps_j+\eps_h}]}&=\mu_{i,j,h}x_{\eps_i+\eps_h}\\
{[x_{\eps_i-\eps_j},x_{\eps_k+\eps_j}]}&=\begin{cases}
\eta_{i,j,k}x_{\eps_i+\eps_k} & i<k \\
\eta'_{i,j,k}x_{\eps_k+\eps_i} & k<i
\end{cases}\\
{[x_{\eps_i-\eps_j},x_{\eps_j-\eps_h}]}&=\rho_{i,j,h}x_{\eps_i-\eps_h}
\end{align*}

\begin{example}\label{ex:B2C2}
Before going to the general case, we illustrate the nilradical associated to $C_2$, (which is isomorphic to $B_2$). The system of positive roots, given by $\Pi^+=\{\eps_1-\eps_2,\ \allowbreak2\eps_2,\ \eps_1+\eps_2,\ 2\eps_1\}$, is a nice basis representing the nice nilpotent Lie algebra \texttt{421:1} $(0,0,e^{12},e^{13})$. Note that the maximal root is $2\eps_1$.
We consider the involution $\sigma=(1\,4)(2\,3)$, i.e. $\sigma(\eps_1-\eps_2)=2\eps_1$ and $\sigma(\eps_1+\eps_2)=2\eps_2$. We see that $\sigma$ verifies the conditions of Proposition~\ref{pro:sigmaroot}, e.g.
\[
\eps_1-\eps_2 +2\eps_2=\eps_1+\eps_2\in \Pi^+,\qquad \eps_1-\eps_2+ \eps_1+\eps_2=2\eps_1\in \Pi^+,
\]
but
\[
\sigma(\eps_1-\eps_2)+\sigma(2\eps_2)=2\eps_1+\eps_1+\eps_2\notin \Pi^+,\quad  \sigma(\eps_1-\eps_2)+\sigma(\eps_1+\eps_2)=2\eps_1+2\eps_2\notin \Pi^+,
\]
and similarly for the other condition. We conclude that the corresponding $\sigma$-diagonal metrics are Ricci-flat. In fact, a direct computation applying formula~\eqref{eq:curv} shows that the Riemann tensor is zero. It is easy to see that the  arrow-breaking involution $\sigma$ is unique.
\end{example}

In the general case of $C_n$, for $n\geq 3$, consider $\Theta=\{\eps_i-\eps_{i+1}\st 1\leq i\leq n-2\}$. The positive roots generated by this set are $\lela \Theta \rira^+=\{\eps_i-\eps_j\st 1\leq i<j\leq n-1\}$.
We shall define a basis of root vectors in $\Pi^+\smallsetminus \lela \Theta\rira^+$ as follows:
\begin{align*}
\alpha_i&=\eps_i-\eps_n,\ i\neq n \qquad\qquad \beta_i=2\eps_i,\ i\neq n \\
\gamma_i&=\eps_i+\eps_n,\ i\neq 1 \qquad\qquad \delta_i=\eps_i+\eps_1,\ i\neq 1\\
\zeta_{i,j}&=\eps_i+\eps_j,\ i\neq j\text{ and } i,j\notin\{ 1,n\}.
\end{align*}
In particular $\beta_1=2\eps_1$, $\gamma_n=2\eps_n$ and $\delta_n=\eps_1+\eps_n$. We  get:
\begin{align*}
(\alpha_i+\gamma_j)&\in\Pi^+,\ i\neq n,j\neq 1\\
(\alpha_i+\delta_n)&\in\Pi^+,\ i\neq n.
\end{align*}
For the difference we have:
\begin{align*}
(\zeta_{i,j}-\alpha_i),(\zeta_{i,j}-\gamma_j)&\in\Pi^+\smallsetminus \lela \Theta\rira^+, i\neq j\text{ and }
 i,j\notin\{ 1,n\}\\
(\gamma_i-\alpha_i),(\gamma_i-\gamma_n)&\in\Pi^+\smallsetminus \lela \Theta\rira^+,i\neq 1\\
(\delta_j-\alpha_1),(\delta_j-\gamma_j)&\in\Pi^+\smallsetminus \lela \Theta\rira^+,j\neq 1\\
(\beta_i-\alpha_i),(\beta_i-\gamma_i)&\in\Pi^+\smallsetminus \lela \Theta\rira^+,i\neq 1,n\\
(\delta_i-\alpha_i),(\delta_i-\delta_n)&\in\Pi^+\smallsetminus \lela \Theta\rira^+,i\neq 1,n\\
(\beta_1-\alpha_1),(\beta_1-\delta_n)&\in\Pi^+\smallsetminus \lela \Theta\rira^+.
\end{align*}
We can construct an arrow-breaking involution $\sigma$ such that
\[\sigma(\alpha_i)=\beta_{n-i},\ i\neq n \qquad \sigma(\gamma_j)= \delta_{n-j+2},\ j\neq 1\]
and the other elements are fixed. It is easy to see that this involution satisfies the conditions of Proposition~\ref{pro:sigmaroot}, hence it is arrow-breaking and by Proposition~\ref{prop:2OrderSigmaEnhancedGiveRicciFlat} the associated $\sigma$-diagonal metrics are Ricci-flat.

For $n=3$, this construction yields the $8$-dimensional Lie algebra \[(0,0,0,e^{12},e^{23},e^{14},e^{15}+e^{34},e^{35})\] with $\sigma= (1\,8)(2\,7)(3\,6)(4\,5)$; in this case the $\sigma$-diagonal metric turns out to be flat.

For $n\geq 4$ the metrics are Ricci-flat but not flat; if $n>4$, we can apply  criterion~\ref{enum:CriteriaFlat1} of Proposition~\ref{pr:CriteriaFlatness} to $\alpha_2, \gamma_3$: indeed the bracket $[x_{\alpha_2},x_{\gamma_3}]=[x_{\eps_2-\eps_n},x_{\eps_3+\eps_n}]=x_{\eps_2+\eps_3}=x_{\zeta_{2,3}}$ is a fixed point of $\sigma$, and  there are no arrows of type $x_{\alpha_2}\xrightarrow{}x_{\delta_{n-1}}=\sigma(x_{\gamma_3})$ or $x_{\gamma_3}\xrightarrow{}x_{\beta_{n-2}}=\sigma(x_{\alpha_2})$. For $n=4$, we can choose $\gamma_4$ instead of $\gamma_3$ and again apply~\ref{enum:CriteriaFlat1}.

\section{Maximal nice Lie algebras and low-di\-men\-sio\-nal Ricci-flat metrics}
\label{sec:maximal}
Given two nice diagrams with $n$ nodes $\Delta,\Delta'$, we will write $\Delta\leq\Delta'$ if there is a bijection from $N(\Delta)$ to $N(\Delta')$ mapping $\mathcal{I}_\Delta$ to a subset of $\mathcal{I}_{\Delta'}$; if $\g$, $\g'$ are Lie algebras with diagrams $\Delta, \Delta'$, we will also write  $\g\leq\g'$.

Notice that $\g\leq \g'$ and $\g'\leq \g$ can both be true for nonisomorphic $\g,\g'$ with isomorphic diagrams; however, $\leq$ defines a partial order relation on isomorphism classes of nice diagrams.

This partial order is relevant for the construction of Ricci-flat metrics because of the following:
\begin{lemma}
\label{lemma:order}
Let  $\g,\g'$ be nice Lie algebras with nice diagrams $\Delta\leq \Delta'$. If $\g'$ has an arrow-breaking involution, then $\g$ also has an arrow-breaking involution.
\end{lemma}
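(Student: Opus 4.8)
The plan is to transport an arrow-breaking involution of $\Delta'$ to one of $\Delta$ along the bijection witnessing $\Delta\leq\Delta'$, exploiting that the arrow-breaking condition of Definition~\ref{def:arrowbreaking} is \emph{monotone} under deletion of arrows: having fewer arrows can only make it easier to satisfy. Recall also (Remark~\ref{remark:structureconstantsdonotmatter}) that being arrow-breaking depends only on the underlying diagram, so the statement is really one about $\Delta\leq\Delta'$.

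Concretely, I would fix a bijection $\phi\colon N(\Delta)\to N(\Delta')$ witnessing $\Delta\leq\Delta'$, i.e.\ such that whenever $x\xrightarrow{y}z$ is an arrow of $\Delta$, then $\phi(x)\xrightarrow{\phi(y)}\phi(z)$ is an arrow of $\Delta'$ (this is the meaning of $\phi(\mathcal{I}_\Delta)\subseteq \mathcal{I}_{\Delta'}$). Given an arrow-breaking involution $\sigma'$ of $\Delta'$, set $\sigma=\phi^{-1}\circ\sigma'\circ\phi$; then $\sigma$ is a permutation of $N(\Delta)$ of order two, since $\sigma^2=\phi^{-1}(\sigma')^2\phi=\id$ and $\sigma\neq\id$. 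I claim $\sigma$ is arrow-breaking for $\Delta$. For the first condition of Definition~\ref{def:arrowbreaking}, suppose $x$ has an incoming arrow with label $y$ in $\Delta$. Then $\phi(x)$ has an incoming arrow with label $\phi(y)$ in $\Delta'$, so, as $\sigma'$ is arrow-breaking, $\phi(\sigma(x))=\sigma'(\phi(x))$ has no incoming arrow with label $\phi(\sigma(y))=\sigma'(\phi(y))$ in $\Delta'$. If $\sigma(x)$ had an incoming arrow with label $\sigma(y)$ in $\Delta$, then its image under $\phi$ would be an incoming arrow of $\phi(\sigma(x))$ with label $\phi(\sigma(y))$ in $\Delta'$, a contradiction; hence no such arrow exists in $\Delta$. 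The second (outgoing) condition follows verbatim by the same argument.

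There is no genuine obstacle here: the content is exactly that arrow-breakingness only refers to the arrows present and survives deleting some of them, and the only point needing care is keeping the relabeling $\phi$ straight, since $\mathcal{I}_\Delta\subseteq\mathcal{I}_{\Delta'}$ holds only after this identification of node sets. Alternatively, the conclusion can be read off Lemma~\ref{lemma:arrowbreaking}: after identifying via $\phi$, the linear factors of $P_\Delta$ (resp.\ $Q_\Delta$) are among those of $P_{\Delta'}$ (resp.\ $Q_{\Delta'}$), so $P_\Delta\mid P_{\Delta'}$ and $Q_\Delta\mid Q_{\Delta'}$ in $\Z[\mathrm{e}_1,\dots,\mathrm{e}_n]$; a $\sigma$-invariant divisor of positive degree of $P_\Delta$ or $Q_\Delta$ would then be a $\sigma$-invariant divisor of positive degree of $P_{\Delta'}$ or $Q_{\Delta'}$, which is excluded.
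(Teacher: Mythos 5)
Your proposal is correct and follows essentially the same route as the paper: identify the node sets via the bijection witnessing $\Delta\leq\Delta'$, note that every arrow of $\Delta$ is then an arrow of $\Delta'$, and observe that the arrow-breaking condition is only easier to satisfy with fewer arrows, so the transported involution works. The extra reformulation via divisibility of $P_\Delta,Q_\Delta$ is a harmless equivalent restatement (via Lemma~\ref{lemma:arrowbreaking}), not a genuinely different argument.
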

\begin{proof}
Since $\Delta\leq\Delta'$, we can assume $\g$ and $\g'$ have the same set of nodes and that arrows of $\Delta$ are also arrows of $\Delta'$. Thus, an arrow-breaking involution for $\Delta'$ is also an arrow-breaking involution for $\Delta$.
\end{proof}

Lemma~\ref{lemma:order} effectively reduces the problem to the smaller class of maximal nice Lie algebras; we will say that a nice nilpotent Lie algebra is \emph{maximal} if its diagram is maximal in the class of isomorphism classes of nice diagrams associated to a nilpotent Lie algebra.

\begin{example}
The nice Lie algebra $(0,0,e^{12})$ is maximal, because adding an arrow would either create a cycle or multiple arrows with same source and destination, breaking the nice diagram condition.
\end{example}

In the definition of maximality, one only considers nice diagrams associated to a Lie algebra. This restriction is made necessary by the fact that a  maximal nice nilpotent Lie algebra may have a nonmaximal diagram, in the sense that it is possible to add arrows while retaining the conditions defining a nice diagram.

For instance, the nice nilpotent Lie algebras
\[
\texttt{85421:4}\quad(0,0,0,e^{12},\pm e^{14},e^{13}+e^{24},e^{15},e^{17}+e^{23}),
\]
are maximal. In this case, it is possible to add arrows $e_2\xrightarrow{e_6}e_7$, $e_3\xrightarrow{e_4}e_7$, $e_4\xrightarrow{e_6}e_8$ to its diagram by preserving the nice condition, but any such diagram will not have any associated Lie algebras.

\begin{theorem}
The list of maximal nice nilpotent Lie algebras in dimension $\leq7$ is given in Table~\ref{table:maximal}.
\end{theorem}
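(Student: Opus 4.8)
The plan is to reduce a classification statement to a finite, mechanical enumeration, exploiting the partial order $\leq$ on nice diagrams introduced just before the theorem. First I would observe that a nice nilpotent Lie algebra of dimension $n\leq 7$ has an associated nice diagram, and by definition it is maximal precisely when this diagram is maximal among the (finitely many) isomorphism classes of nice diagrams that actually arise from a nilpotent Lie algebra of that dimension. Since the classification of nice nilpotent Lie algebras of dimension $\leq 9$ up to equivalence is available from~\cite{ContiRossi:Construction}, together with the list of their nice diagrams, the set of candidate diagrams is explicitly known and finite; the proof is then the computation that identifies the maximal elements of this poset in each dimension $\leq 7$ and checks that they coincide with the entries of Table~\ref{table:maximal}.

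The key steps, in order, are: (i) for each dimension $n\in\{1,\dots,7\}$, take the list of nice diagrams of nilpotent Lie algebras from~\cite{ContiRossi:Construction}; (ii) for each pair $\Delta,\Delta'$ in the list, test whether $\Delta\leq\Delta'$, i.e.\ whether there is a bijection of node sets sending $\mathcal I_\Delta$ into $\mathcal I_{\Delta'}$ — this is a finite check (a search over the $n!$ relabelings, or more cleverly over label-preserving bijections constrained by the arrow structure); (iii) extract the maximal elements of the resulting partial order; (iv) verify that this set of maximal diagrams is exactly the one tabulated in Table~\ref{table:maximal}, and record one representative Lie algebra for each (the diagram may admit several inequivalent or even non-isomorphic Lie algebras, but maximality is a property of the diagram, so any representative serves). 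One should also note, as the surrounding text already flags, that maximality in this sense is \emph{not} the same as the diagram being maximal among all abstract nice diagrams: a maximal nice nilpotent Lie algebra can have a diagram to which arrows can still be added at the cost of losing the existence of an associated Lie algebra (the \texttt{85421:4} example). So step (ii)–(iii) must be run only over diagrams that genuinely come from Lie algebras, which is why the classification of~\cite{ContiRossi:Construction} is the essential input.

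The main obstacle is not conceptual but bookkeeping: the number of nice diagrams grows quickly (e.g.\ $152+4\tfrac12$ families in dimension $7$, cf.\ Table~\ref{table:classification}), so the pairwise comparison $\Delta\leq\Delta'$ must be organized efficiently — for instance by first sorting diagrams by number of arrows (a diagram with fewer arrows can never dominate one with more) and by the multiset of in/out-degrees of nodes, and only then running the relabeling search on the remaining candidates. A secondary subtlety is handling the one-parameter families that appear in dimension $7$: one must check that the diagram of each family is comparable (or not) to the others uniformly in the parameter, which holds because the underlying diagram is independent of the structure constants. Once these are dealt with the proof is a finite verification, and the content of the theorem is precisely the resulting Table~\ref{table:maximal}.
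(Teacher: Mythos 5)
Your plan is sound and rests on the same essential input as the paper's proof, namely the classification of nice nilpotent Lie algebras of dimension $\leq 7$ in \cite{ContiRossi:Construction}, but it certifies the two halves of the statement differently. For the algebras \emph{not} in Table~\ref{table:maximal}, your pairwise comparison and the paper's argument coincide in substance: the paper simply exhibits, for each such algebra, an explicit extension $\g\leq\g'$ obtained by adding a bracket, which is exactly the positive outcome of your search. The real divergence is in proving that the listed algebras \emph{are} maximal. You do this by comparing each candidate diagram against every other diagram in the classified list — a global poset computation that is correct by the very definition of maximality, but requires a relabeling search over the full list ($152+4\tfrac12$ families in dimension $7$) and produces no certificate a reader can check by hand. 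The paper instead argues locally: every listed diagram can be numbered so that arrows increase the index, so any strictly larger nice diagram would be obtained by adding a pair of arrows $e_i\xrightarrow{e_j}e_k$, $e_j\xrightarrow{e_i}e_k$ compatible with some order $\rho\in S(\Delta)$ and satisfying \ref{enum:condNice1}--\ref{enum:condNice3}; ruling out all such pairs is a short hand-checkable computation for each entry and excludes any dominating nice diagram, whether or not it carries a Lie algebra. Note that the paper's criterion is only sufficient — as you correctly anticipate with the \texttt{85421:4} example, a maximal algebra can have a diagram to which abstract arrows can still be added — so your definition-level comparison is the more robust method in general, while the paper's local criterion is cheaper and happens to suffice for all entries in dimensions $6$ and $7$ (in dimension $\leq 5$ the paper even argues by exclusion). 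In either formulation the theorem ultimately reduces to a finite verification against the classification, and your scheme, once actually executed, would establish it.
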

\begin{proof}
Going through the classification of~\cite{ContiRossi:Construction}, one sees that for each nice Lie algebra of dimension $\leq7$ not appearing in Table~\ref{table:maximal} it is always possible to add a bracket so as to obtain a nice Lie algebra; for instance (omitting the obvious case of abelian Lie algebras), in dimension $\leq 5$ we find
\begin{align*}
(0,0,0,e^{12})&\leq (0,0,e^{14},e^{12})\\ 
(0,0,e^{12},e^{13},e^{14})&\leq (0,0,e^{12},e^{13},e^{14}+e^{23})\\
(0,0,e^{12},e^{13},e^{23})&\leq (0,0,e^{12},e^{13},e^{14}+e^{23})\\
(0,0,0,e^{12},e^{14})&\leq (0,0,0,e^{12},e^{14}+e^{23})\\
(0,0,0,e^{12},e^{24}+e^{13})&\leq (0,0,e^{14},e^{12},e^{24}+e^{13})\\
(0,0,0,e^{12},e^{13})&\leq (0,0,0,e^{12},e^{13}+e^{24})\\
(0,0,0,0,e^{12})&\leq (0,0,0,0,e^{12}+e^{34})\\
(0,0,0,0,e^{34}+e^{12})&\leq (0,0,0,e^{13},e^{34}+e^{12}).
\end{align*}
It follows that none of the nice nilpotent Lie algebras on the left side is maximal. The same argument can be used in dimensions $6,7$ to prove that the nice nilpotent Lie algebras not appearing in Table~\ref{table:maximal} are not maximal.

It remains to prove that the Lie algebras appearing in Table~\ref{table:maximal} are maximal. For dimension $\leq5$ this is by exclusion, since at least one maximal nice nilpotent Lie algebras must exist in any dimension.

For dimensions $6,7$, observe that all nice Lie algebras appearing in Table~\ref{table:maximal} satisfy  $i<j$ whenever $e_i\to e_j$ is an arrow. For each diagram $\Delta$ with nodes $\{1,\dotsc, n\}$ satisfying this condition, define the set
\[S(\Delta)=\{\rho\in\Sigma_n\st \rho_i<\rho_j \text{ whenever } e_i\to e_j\};\]
if for every $\rho\in S(\Delta)$ it is not possible to add a pair of arrows $e_i\xrightarrow{e_j}e_k$, $e_j\xrightarrow{e_i}e_k$ with $\rho_i,\rho_j<\rho_k$ in such a way that the resulting diagram satisfies
\ref{enum:condNice1}--\ref{enum:condNice3}, it follows that $\Delta$ is maximal.

We now proceed to prove that the following Lie algebras are maximal:
\begin{align*}
\texttt{64321:2} &\quad (0,0,e^{12},e^{13},e^{14},e^{15}+e^{23})\\
\texttt{64321:4} &\quad (0,0,e^{12},e^{13},e^{14}+e^{23},e^{15}+e^{24})\\
\texttt{64321:5} &\quad (0,0, -e^{12},e^{13},e^{14}+e^{23},e^{25}+e^{34})\\
\texttt{6431:2a} &\quad (0,0,e^{12},e^{13},e^{23},e^{14}+e^{25})\\
\texttt{6431:2b} &\quad (0,0,e^{12},e^{13},e^{23},e^{14}-e^{25})\\
\texttt{632:3a} &\quad (0,0,0,e^{12},e^{14}+e^{23},e^{13}+e^{24})\\
\texttt{632:3b} &\quad (0,0,0,e^{12},e^{14}+e^{23},e^{13}-e^{24}).
\end{align*}

In order to prove that $\texttt{64321:2}=(0,0,e^{12},e^{13},e^{14},e^{15}+e^{23})$ is maximal, observe that in this case,  $S(\Delta)$ contains the identity and $(1\,2)$. Thus by the ordered condition and~\ref{enum:condNice1}, the only arrows that can be added are
\[e_2\xrightarrow{e_4}e_5,\ e_2\xrightarrow{e_4}e_6,\ e_2\xrightarrow{e_5} e_6,\ e_3\xrightarrow{e_4}e_5,\ e_3\xrightarrow{e_4}e_6,\ e_3\xrightarrow{e_5}e_6,\ e_4\xrightarrow{e_5}e_6.\]
Each choice violates~\ref{enum:condNice2}.

For the others, it is easy to check that arrows of the form $e_i\xrightarrow{e_j}e_k$ with $i,j<k$ cannot be added preserving the nice condition; in each case we must consider the arrows that are not of this type, but satisfy $\rho_i,\rho_j<\rho_k$ for some $\rho\in S(\Delta)$.

For $\texttt{64321:4}=(0,0,e^{12},e^{13},e^{14}+e^{23},e^{15}+e^{24})$ and $\texttt{64321:5}=(0,0, e^{12},\allowbreak e^{13},e^{14}+e^{23},e^{25}-e^{34})$, $S(\Delta)$ is generated by $(1\,2)$, so there is no additional arrow satisfying~\ref{enum:condNice1} to consider.

For $\texttt{6431:2}=(0,0,e^{12},e^{13},e^{23},e^{14}\pm e^{25})$,  $S(\Delta)$ is the group generated by $(1\,2)$ and $(4\,5)$, so we must additionally consider the arrows $e_5\xrightarrow{e_4}e_6$, $e_1\xrightarrow{e_5}e_4$, $e_3\xrightarrow{e_5}e_4$; each of them violates~\ref{enum:condNice2}.

For $\texttt{632:3}=(0,0,0,e^{12},e^{14}+e^{23},e^{13}\pm e^{24})$, $S(\Delta)$ contains the group generated by $(1\,2)$, $(1\,3)$ and $(5\,6)$, and additionally the elements  $(3\,4)$, $(3\,4)(5\,6)$, $(1\,2)(3\,4)$, $
(1\,2)(3\,4)(5\,6)$. So we must additionally consider the arrows $e_i\xrightarrow{e_6}e_5$ for $i=1,\dotsc, 4$, and
each of them violates~\ref{enum:condNice2}.

A similar argument proves the maximality of the $7$-dimensional Lie algebras in the list.
\end{proof}

{\setlength{\tabcolsep}{2pt}
\begin{table}[thp]
\centering
{\caption{\label{table:maximal} Maximal nice Lie algebras $\g$ with $3\leq n\leq 7$ nodes and, for each except {\ttfamily 64321:5}, an arrow-breaking involution $\sigma$, and a plane giving a nonzero component of  the curvature tensor.}
\begin{small}
\begin{tabular}{>{\ttfamily}l L L L}
\toprule
\textnormal{Name} & \g & \sigma & \text{plane}\\
\midrule
31:1&0,0,e^{12}& (1\,3)\\[3pt]
421:1&0,0,e^{12},e^{13}&(1\,4)(2\,3)\\[3pt]
5321:2&0,0,e^{12},e^{13},e^{14}+e^{23}& (2\,4)(1\,5)&e_1\wedge e_2\\[3pt]
64321:2&0,0,e^{12},e^{13},e^{14},e^{15}+e^{23}&
(2\, 4)(1\, 6)& e_1\wedge e_2\\
64321:4&0,0,e^{12},e^{13},e^{14}+e^{23},e^{15}+e^{24}&
(3\, 4)(2\, 5)(1\, 6)\\
64321:5&0,0,-e^{12},e^{13},e^{14}+e^{23},e^{25}+e^{34}&
\text{none}\\
6431:2&0,0,e^{12},e^{13},e^{23},e^{14}\pm e^{25}&
(3\, 4)(2\, 6)(1\, 5)\\
632:3&0,0,0,e^{12},e^{14}+e^{23},e^{13}\pm e^{24}&
(2\, 6)(1\, 5)&e_1\wedge e_2\\[3pt]
754321:2&0,0,e^{12},e^{13},e^{14},e^{15},e^{16}+e^{23}&(3\, 5)(1\, 7)&e_1\wedge e_3\\
754321:3&0,0,e^{12},e^{13},e^{14},e^{15}+e^{23},e^{16}+e^{24}&(3\, 4)(2\, 6)(1\, 7)&e_1\wedge(e_2-e_4)\\
\multirow{2}{*}{754321:9}&
\multicolumn{1}{L}{0,0,{(1-\lambda)} e^{12},e^{13}, \lambda e^{14}+e^{23},}&\multirow{2}{*}{$(3\, 5)(2\, 6)(1\, 7)$}\\*
&\multicolumn{1}{R}{e^{24}+e^{15},e^{25}+e^{34}+e^{16}}&\\
75432:2&0,0,-e^{12},e^{13},e^{14},e^{15}+e^{23},e^{25}+e^{34}&(3\, 5)(2\, 7)(1\, 6)&(e_1-e_2)\wedge e_3\\
75421:2&0,0,e^{12},e^{13},e^{23},e^{14},e^{16}+e^{25}&(3\, 4)(2\, 7)(1\, 5)&e_1\wedge(e_2-e_4)\\
75421:5&0,0,-e^{12},\pm e^{13},e^{23},e^{14}+e^{25},e^{26}+e^{34}&(3\, 5)(2\, 7)(1\, 6)&(e_1-e_2)\wedge e_3\\
7542:3&0,0,e^{12},e^{13},e^{23},e^{15}+e^{24},e^{14}\pm e^{25}&(3\, 5)(2\, 7)(1\, 6)&e_1\wedge(e_2-e_3)\\
74321:10&0,0,0,-e^{12},e^{14},e^{15}+e^{23},e^{26}+e^{13}+e^{45}&(3\, 5)(2\, 7)(1\, 6)&(e_1-e_5)\wedge(e_2+e_3)\\
7431:4&0,0,0,e^{12},e^{14},e^{13}+e^{24},e^{15}+e^{23}&(3\, 7)(2\, 5)(1\, 6)&e_1\wedge e_2\\
7431:11&0,0,0,e^{12},\pm e^{13}+e^{24},e^{14}+e^{23},e^{15}+e^{26}&(3\, 5)(2\, 7)(1\, 6)&(e_1-e_3)\wedge e_2\\
\multirow{2}{*}{7431:13}&\multicolumn{1}{L}{0,0,0,(A-1)e^{12},\pm e^{14}+ e^{23},}&\multirow{2}{*}{$(3\, 5)(2\, 7)(1\, 6)$}&\multirow{2}{*}{$(e_1+e_2)\wedge (e_2+e_3)$}\\
&\multicolumn{1}{R}{Ae^{13}+e^{24}, e^{15}+e^{26}+e^{34}}&\\
7421:11&0,0,0,e^{12},e^{13},e^{14},e^{16}+e^{24}\pm e^{35}&(3\, 4)(1\, 7)&e_1\wedge e_3\\
7421:13&0,0,0,e^{12},e^{13},e^{24},e^{14}+e^{26}+e^{35}&(3\, 6)(2\, 7)(1\, 5)&(e_1-e_3)\wedge e_2\\
742:13&0,0,0,e^{12},e^{13},e^{14}+e^{23},e^{15}+e^{24}&(3\, 7)(2\, 5)(1\, 6)&(e_1-e_2)\wedge e_3\\
742:14&0,0,0,\pm e^{12},e^{13},e^{14}+e^{23},e^{24}+ e^{35}&(3\, 7)(2\, 6)(1\, 5)&(e_1-e_3)\wedge e_2\\
742:16&0,0,0,e^{12},e^{13},e^{15}+e^{24},e^{14}+e^{35}&(5\, 7)(2\, 3)(1\, 6)& e_1\wedge e_2\\
741:5&0,0,0,e^{12},e^{13},e^{23},e^{15}+e^{24}+e^{36}&(3\, 7)(2\, 6)(1\, 4)& e_1\wedge e_3\\
73:7&0,0,0,0,e^{14}+e^{23},e^{13}\pm e^{24},e^{12}+e^{34}&(4\, 6)(2\, 7)(1\, 5)& (e_1+e_2)\wedge (e_2+e_4)\\
\bottomrule
\end{tabular}
\end{small}
}
\end{table}
}

\begin{proposition}\label{pro:SigmaEnhancedUpDim7}
Every nice nilpotent Lie algebra of dimension $\leq 7 $ has an arrow-breaking involution except 
\[\emph{\texttt{64321:5}}\quad (0,0,-e^{12},e^{13},e^{14}+e^{23},e^{25}+e^{34})\]
\end{proposition}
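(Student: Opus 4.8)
The plan is to reduce to maximal nice Lie algebras and then settle the finitely many maximal cases by hand. First I would invoke Lemma~\ref{lemma:order}: since there are only finitely many isomorphism classes of nice diagrams with at most $7$ nodes, every nice nilpotent Lie algebra $\g$ of dimension $\le 7$ satisfies $\g\le\g'$ for some maximal nice nilpotent Lie algebra $\g'$, and if $\g'$ admits an arrow-breaking involution then so does $\g$. By the previous theorem the maximal ones are the entries of Table~\ref{table:maximal} together with the abelian Lie algebras, for which the diagram is empty and the claim is trivial. Going through the classification of~\cite{ContiRossi:Construction} --- essentially the same bookkeeping as in the proof of the previous theorem, which for each non-maximal nice nilpotent Lie algebra of dimension $\le 7$ exhibits a dominating maximal one --- one checks that every such algebra other than \texttt{64321:5} can be dominated by a maximal algebra different from \texttt{64321:5}. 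Hence it suffices to prove (i) that each entry of Table~\ref{table:maximal} except \texttt{64321:5} admits an arrow-breaking involution, and (ii) that \texttt{64321:5} admits none. For (i), the table already records a candidate involution $\sigma$ for each such algebra, and one verifies it is arrow-breaking through Lemma~\ref{lemma:arrowbreaking}, by checking that $P_\Delta$ and $\sigma P_\Delta$ have no common linear factor and likewise for $Q_\Delta$ and $\sigma Q_\Delta$; equivalently, one checks the two conditions of Definition~\ref{def:arrowbreaking} directly. This is a routine combinatorial check, carried out once per entry.

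For (ii), the substantive part, I would argue directly from Definition~\ref{def:arrowbreaking} for the Lie algebra \texttt{64321:5}. From its presentation one reads off the arrows $e_1\xrightarrow{e_2}e_3$, $e_1\xrightarrow{e_3}e_4$, $e_1\xrightarrow{e_4}e_5$, $e_2\xrightarrow{e_3}e_5$, $e_2\xrightarrow{e_5}e_6$, $e_3\xrightarrow{e_4}e_6$, together with their symmetric arrows (condition~\ref{enum:condNice3}); in particular the sets of incoming labels of the nodes $e_1,\dots,e_6$ are $\emptyset,\emptyset,\{1,2\},\{1,3\},\{1,2,3,4\},\{2,3,4,5\}$. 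Two remarks drive the argument. First, an arrow-breaking $\sigma$ can never transpose two nodes $e_a,e_b$ with $[e_a,e_b]\ne 0$: then $e_a\xrightarrow{e_b}e_c$ is an arrow, $\sigma(e_a)=e_b$, and $e_b\xrightarrow{e_a}e_c$ is an arrow with label $e_a=\sigma(e_b)$, contradicting the second condition of Definition~\ref{def:arrowbreaking}. Second, if $\sigma(e_z)=e_{z'}$ then $\sigma$ maps the incoming-label set $L_z$ of $e_z$ into the complement of the incoming-label set $L_{z'}$ of $e_{z'}$; since $\sigma$ is a bijection of a $6$-element set, $\sigma(L_z)$ omits exactly $6-|L_z|$ nodes, so $|L_{z'}|\le 6-|L_z|$ and moreover $L_{z'}$ must sit among those omitted nodes.

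Applying the second remark to $e_5$ (with $|L_5|=4$) gives $|L_{\sigma(e_5)}|\le 2$, hence $\sigma(e_5)\in\{e_1,e_2,e_3,e_4\}$, and a short direct check using the ``omitted nodes'' refinement rules out $e_3$ and $e_4$; so $\sigma(e_5)\in\{e_1,e_2\}$, and since $[e_2,e_5]\ne 0$ the first remark forces $\sigma(e_5)=e_1$. The same argument applied to $e_6$ gives $\sigma(e_6)=e_2$. Then $\{e_3,e_4\}$ is $\sigma$-invariant, leaving only $\sigma=(1\,5)(2\,6)$ or $\sigma=(1\,5)(2\,6)(3\,4)$: the former violates the first condition of Definition~\ref{def:arrowbreaking} at $e_4$, since the arrow $e_1\xrightarrow{e_3}e_4$ is then fixed by $\sigma$; the latter transposes the pair $\{e_3,e_4\}$ with $[e_3,e_4]\ne 0$ and is excluded by the first remark. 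Hence \texttt{64321:5} has no arrow-breaking involution.

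The main obstacle is making the case analysis in (ii) genuinely exhaustive without brute-forcing all order-two permutations of six elements; the counting remark on incoming labels is the device that collapses it, and some care is needed to be sure the reductions (that $\sigma$ fixes neither $e_5$ nor $e_6$, and the precise values of $\sigma(e_5)$ and $\sigma(e_6)$) leave no gap. By contrast the verification in (i) is lengthy but entirely mechanical, and the domination claim for non-maximal algebras is mere bookkeeping over the known classification.
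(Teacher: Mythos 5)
Your proposal is correct and follows essentially the same route as the paper: reduce to the maximal algebras of Table~\ref{table:maximal} via Lemma~\ref{lemma:order}, and show that \texttt{64321:5} admits no arrow-breaking involution by forcing $\sigma$ to send $e_5,e_6$ into $\{e_1,e_2\}$ and then deriving a contradiction from the $\sigma$-invariance of $\{e_3,e_4\}$ together with $[e_3,e_4]\neq 0$. The only cosmetic difference is that you argue directly from Definition~\ref{def:arrowbreaking} with an incoming-label count, whereas the paper encodes the same constraint through divisibility of $Q_\Delta$ and $P_\Delta$, the two formulations being equivalent by Lemma~\ref{lemma:arrowbreaking}.
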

\begin{proof}
We first prove that the Lie algebra in the statement has no arrow-breaking involution $\sigma$.

Suppose such a $\sigma$ exists. Since $Q_\Delta$ and $\sigma Q_\Delta$ are coprime and $(1+{\mathrm{e}}_1-{\mathrm{e}}_5)(1+{\mathrm{e}}_2-{\mathrm{e}}_5)(1+{\mathrm{e}}_3-{\mathrm{e}}_5)(1+{\mathrm{e}}_4-{\mathrm{e}}_5)$ divides $Q_\Delta$, it follows that  $\sigma Q_\Delta$ does not have any linear factor of the form $(1+{\mathrm{e}}_i-{\mathrm{e}}_5)$. Thus, $\sigma_5$ is either $1$ or $2$. The same argument applies to $6$. It follows that ${\mathrm{e}}_3+{\mathrm{e}}_4$ is $\sigma$-invariant; since it divides $P_\Delta$, we reach a contradiction.

Table~\ref{table:maximal} gives a list with one arrow-breaking involution for each of the other  nice nilpotent Lie algebras of dimension $\leq 7$. We point out that the involution is generally not unique.
\end{proof}

\begin{example}\label{ex:missing6NiceRicciFlat} 
Consider the nice Lie algebra
\[\texttt{64321:5}\quad(0,0,- e^{12},e^{13},e^{14}+e^{23},e^{25}+e^{34}),\]
that does not admit any arrow-breaking involution. We will show that for some $\sigma$-diagonal metrics the Ricci tensor can be zero. Using the involution $\sigma=(1\,3)(4\,5)$, the Ricci tensor associated to the $\sigma$-diagonal metrics \eqref{eqn:sigmametric} is:
\[\ric=
 \frac{(g_{1}+g_{2}) g_{4}}{2 g_{1} g_{2}}e^1\odot e^4 +
  \left(\frac{g_{4}}{2 g_{1}}-\frac{g_{6}}{2 g_{4}} \right)e^2\odot e^3
   -\left(\frac{g_{4}^2}{2 g_{1}^2}+\frac{g_{6}}{2 g_{2}}\right)
  e^5\otimes e^5.
  \]
  where $e^1\odot e^j=e^i\otimes e^j+e^j\otimes e^i$.
Clearly, the Ricci tensor
is zero if $g_{2}=-g_{1}$ and $g_{6}=g_{4}^2/g_{1}$. Observe that  $e_2\xrightarrow{e_5}e_6=\sigma(e_6)$, and there are no arrows of the form $e_2\rightarrow e_{\sigma_5}$, $e_5\rightarrow e_{\sigma_2}$, so by the Criteria~\ref{enum:CriteriaFlat1} of Proposition~\ref{pr:CriteriaFlatness}, we can conclude that the metric is not flat.

Moreover, the following $\sigma$ can define $\sigma$-diagonal Ricci-flat non-flat metric:
\begin{align*}
(1\,6)(3\,5)&\quad\text{ for }g_1=g_3\\
(2\,4)(3\,5)(1\,6)&\quad\text{ for }g_1=\pm g_3\\
(2\,5)(1\,6)&\quad\text{ for }g_3=\frac{g_2^2 (g_1^2-g_4^2)}{g_1^2 g_4}.
\end{align*}
\end{example}

From Proposition~\ref{pro:SigmaEnhancedUpDim7} and the previous example we obtain:
\begin{theorem}\label{teo:nicedimless7}
Every nice nilpotent Lie algebra of dimension $\leq 7 $ has a Ricci-flat metric.
\end{theorem}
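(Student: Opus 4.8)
The plan is to combine the two results that immediately precede the statement. Proposition~\ref{pro:SigmaEnhancedUpDim7} tells us that every nice nilpotent Lie algebra of dimension $\leq 7$ admits an arrow-breaking involution $\sigma$, with the single exception of \texttt{64321:5}; and by Proposition~\ref{prop:2OrderSigmaEnhancedGiveRicciFlat}, any such involution produces a Ricci-flat $\sigma$-diagonal metric of the form~\eqref{eqn:sigmametric}. So the only thing left to do is to exhibit a Ricci-flat metric on \texttt{64321:5}, and this is precisely what Example~\ref{ex:missing6NiceRicciFlat} supplies: taking the order-two permutation $\sigma=(1\,3)(4\,5)$ and the $\sigma$-diagonal metric~\eqref{eqn:sigmametric} with the parameter constraints $g_2=-g_1$ and $g_6=g_4^2/g_1$, the Ricci tensor computed there vanishes identically.

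Concretely, I would phrase the proof as follows. First invoke Proposition~\ref{pro:SigmaEnhancedUpDim7}: every nice nilpotent Lie algebra of dimension $\leq 7$ other than \texttt{64321:5} carries an arrow-breaking involution; by Proposition~\ref{prop:2OrderSigmaEnhancedGiveRicciFlat} the associated $\sigma$-diagonal metrics~\eqref{eqn:sigmametric} are Ricci-flat, so the claim holds for all of these. Then treat the remaining algebra \texttt{64321:5} by hand, pointing to Example~\ref{ex:missing6NiceRicciFlat}, where the explicit $\sigma$-diagonal metric with $g_2=-g_1$, $g_6=g_4^2/g_1$ is shown to be Ricci-flat via formula~\eqref{eqn:ricddadad}. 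This exhausts all cases and proves the theorem.

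Since all of the real content has already been carried out in the cited results, there is essentially no obstacle; the proof is a one-line bookkeeping argument. The only point worth a word of care is making sure that \texttt{64321:5} really is the unique exception in Proposition~\ref{pro:SigmaEnhancedUpDim7}, which has already been established there, so nothing further is needed.

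\begin{proof}
By Proposition~\ref{pro:SigmaEnhancedUpDim7}, every nice nilpotent Lie algebra of dimension $\leq 7$, with the sole exception of
\[\texttt{64321:5}\quad(0,0,-e^{12},e^{13},e^{14}+e^{23},e^{25}+e^{34}),\]
has an arrow-breaking involution $\sigma$; by Proposition~\ref{prop:2OrderSigmaEnhancedGiveRicciFlat}, the associated $\sigma$-diagonal metric~\eqref{eqn:sigmametric} is Ricci-flat. For the remaining Lie algebra \texttt{64321:5}, Example~\ref{ex:missing6NiceRicciFlat} exhibits a Ricci-flat $\sigma$-diagonal metric, obtained from the involution $\sigma=(1\,3)(4\,5)$ by imposing $g_2=-g_1$ and $g_6=g_4^2/g_1$. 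This covers all cases.
\end{proof}
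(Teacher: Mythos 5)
Your proposal is correct and matches the paper's argument exactly: the theorem is stated there as an immediate consequence of Proposition~\ref{pro:SigmaEnhancedUpDim7} together with Proposition~\ref{prop:2OrderSigmaEnhancedGiveRicciFlat}, with the single exception \texttt{64321:5} handled by the explicit Ricci-flat metric of Example~\ref{ex:missing6NiceRicciFlat}. Nothing is missing.
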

It is natural to ask if the metric can be chosen to be nonflat. In dimension $3$, this is obviously not possible, since the Ricci tensor determines the full curvature tensor; moreover, it is clear that abelian Lie algebras are necessarily flat. In dimension $4$, every Ricci-flat metric on the Lie algebra $(0,0,0,12)$ is flat \cite{BSV15,Su16}. 

With these exceptions, we can indeed show that the metric can be chosen to be nonflat:
\begin{corollary}
Every nonabelian nice nilpotent Lie algebra of dimension $\leq 7 $ not isomorphic to $(0,0,12)$ or $(0,0,0,12)$ has a nonflat Ricci-flat metric.
\end{corollary}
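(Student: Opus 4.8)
The plan is to strengthen Theorem~\ref{teo:nicedimless7}: since every nice nilpotent Lie algebra of dimension $\leq 7$ already carries a Ricci-flat metric, only the non-flatness needs to be arranged. The exclusions are genuine: $(0,0,12)$ is $3$-dimensional, so its Ricci tensor determines the full curvature tensor and Ricci-flat forces flat; on $(0,0,0,12)$ every Ricci-flat metric is flat by~\cite{BSV15,Su16}; and abelian Lie algebras are flat. The Lie algebra \texttt{64321:5}, the unique nice nilpotent Lie algebra of dimension $\leq 7$ with no arrow-breaking involution (Proposition~\ref{pro:SigmaEnhancedUpDim7}), already carries a Ricci-flat non-flat metric by Example~\ref{ex:missing6NiceRicciFlat}. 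So we may assume that $\g$ is a nonabelian nice nilpotent Lie algebra of dimension $\leq 7$, not isomorphic to $(0,0,12)$, $(0,0,0,12)$ or \texttt{64321:5}, equipped with an arrow-breaking involution $\sigma$; by Proposition~\ref{prop:2OrderSigmaEnhancedGiveRicciFlat} every $\sigma$-diagonal metric~\eqref{eqn:sigmametric} is then Ricci-flat, and it remains to find one that is not flat.

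The workhorse is the following refinement of criterion~\ref{enum:CriteriaFlat1} in Proposition~\ref{pr:CriteriaFlatness}: \emph{if the diagram of $\g$ contains an arrow $e_s\xrightarrow{e_t}e_k$ with $e_k$ fixed by $\sigma$, then some $\sigma$-diagonal metric is non-flat.} Indeed, in the expansion~\eqref{eqn:scnice} of $\langle R(e_s,e_t)e_s,e_t\rangle$ the summand $-3\lambda_1^2/g_k$, which appears because $\delta_{k_1,\sigma_{k_1}}=1$ and is nonzero because the arrow is genuine, is a constant multiple of $1/g_k$, whereas every other summand is either independent of $g_k$ or of the shape $(\text{const})\,g_s^2/g_k$, $(\text{const})\,g_t^2/g_k$ or $(\text{const})\,g_sg_t/g_k$; as $e_k$ is $\sigma$-fixed and distinct from $e_s,e_t$, the parameter $g_k$ is independent of $g_s,g_t$, so no cancellation is possible and $\langle R(e_s,e_t)e_s,e_t\rangle$ is a non-zero rational function of the parameters. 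Hence the metric is non-flat for generic $g_i$. The statement analogous to criterion~\ref{enum:CriteriaFlat2}, for an arrow $e_s\xrightarrow{e_k}e_{\sigma_t}$ with $e_k$ fixed by $\sigma$, follows by the same argument.

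It therefore suffices to exhibit, for every such $\g$ other than \texttt{421:1}$=(0,0,12,13)$, an arrow-breaking involution having an arrow of one of the two types above. By Lemma~\ref{lemma:order} the problem reduces to the maximal nice nilpotent Lie algebras of dimension $\geq 5$: each of these, except \texttt{64321:5} and \texttt{31:1}$=(0,0,12)$, occurs in Table~\ref{table:maximal} together with an arrow-breaking $\sigma$ and a witnessing plane, and for almost all of them that plane is a basis plane $e_s\wedge e_t$ whose arrow $e_s\xrightarrow{e_t}e_k$ has $e_k$ fixed by $\sigma$; the few entries recorded with a non-basis plane are verified directly from Proposition~\ref{prop:riemann}. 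Passing along the relabeling of Lemma~\ref{lemma:order} to a subalgebra $\g\leq\g'$ keeps $\sigma$ arrow-breaking, and one checks that it keeps the witnessing arrow present. Since the proper subdiagrams of \texttt{421:1} are only $(0,0,0,12)$ and the abelian one, this one exceptional algebra obstructs nothing further, and it is itself given a non-flat Ricci-flat non-diagonal metric constructed from Proposition~\ref{prop:riemann}.

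The delicate part is the last piece of bookkeeping: one must make sure that for every non-maximal $\g$ the witnessing arrow really survives the relabeling, or else produce another arrow-breaking involution with its own witnessing arrow (or plane). This is a finite verification against the classification of~\cite{ContiRossi:Construction}; wherever the shortcut through an arrow into a $\sigma$-fixed node is unavailable, the fallback is the explicit curvature formula of Proposition~\ref{prop:riemann}, exactly as in Example~\ref{ex:missing6NiceRicciFlat} and as needed for \texttt{421:1}.
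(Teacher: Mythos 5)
Your ``workhorse'' refinement of criterion~\ref{enum:CriteriaFlat1} is essentially correct: in~\eqref{eqn:scnice} the Laurent monomial $g_{k_1}^{-1}$ occurs only in the term $-3\lambda_1^2/g_{k_1}$ (acyclicity forces $k_1\neq s,t$, and since $k_1$ is $\sigma$-fixed the parameter $g_{k_1}$ is independent of $g_s,g_t$), so an arrow $e_s\xrightarrow{e_t}e_{k_1}$ into a $\sigma$-fixed node makes $\lela R(e_s,e_t)e_s,e_t\rira$ a nonzero rational function of the parameters, hence generic $\sigma$-diagonal metrics are nonflat. This is weaker than Proposition~\ref{pr:CriteriaFlatness}, which gives nonflatness for every choice of the $g_i$, but genericity is all the corollary needs, and the paper's proof also only asks for a generic choice.

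The genuine gap is everything after that: what you call ``the last piece of bookkeeping'' is in fact the substance of the proof, and the claims you use to shortcut it are wrong. Nonflatness does not descend along $\g\leq\g'$ the way Ricci-flatness does: Lemma~\ref{lemma:order} transfers the involution, but passing to a subdiagram deletes arrows and changes the structure constants, so the witnessing arrow or plane of the maximal algebra may simply be absent in $\g$; your assertion that ``one checks that it keeps the witnessing arrow present'' fails in general. This is exactly why the paper (i) verifies, for $19$ maximal algebras and one family, that the plane in Table~\ref{table:maximal} gives nonzero curvature not only on the maximal algebra but on every $\g'\leq\g$ with connected diagram, and (ii) must then treat separately the $17$ algebras and one family not dominated by these, listing new involutions and planes in Table~\ref{table:missing} --- a case analysis you defer to an unexecuted ``finite verification''. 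Your reading of Table~\ref{table:maximal} is also inaccurate: several maximal algebras there (\texttt{421:1}, \texttt{64321:4}, \texttt{6431:2}, \texttt{754321:9}) carry no witnessing plane at all, and many of the listed planes are not basis planes with an arrow into a $\sigma$-fixed node, so your shortcut does not apply to them. Finally, the exceptional cases are only asserted, not proved: for \texttt{421:1} the unique arrow-breaking involution yields only flat metrics, so one must exhibit a Ricci-flat nonflat metric by hand (the paper takes the $\sigma$-diagonal metric for the non-arrow-breaking $\sigma=(2\,4)$ with $g_1=-1$, $g_2=g_3=1$, whose Ricci-flatness must be checked directly since Proposition~\ref{prop:2OrderSigmaEnhancedGiveRicciFlat} does not apply), and $(0,0,0,0,12)$, being $(0,0,12)\times\R^2$ with a factor excluded from the statement, also needs its own explicit involution; neither construction appears in your argument.
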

\begin{proof}
For $(0,0,12,13)$, it is easy to check that the only arrow-breaking order two permutation is $(1\,4)(2\,3)$, and every diagonal metric~\eqref{eqn:sigmametric} induced by it is flat. Nevertheless, one can prove that the $\sigma$-diagonal metric induced by $\sigma=(2\,4)$, with parameters $g_1=-1$, $g_2=g_3=1$ is a Ricci-flat nonflat metric.

For $(0,0,0,0,12)$, we see that $\sigma=(2\,4)(1\,3)$ is arrow-breaking and that $\sigma$-diagonal metrics are not flat (for instance, apply \ref{enum:CriteriaFlat1} of Proposition~\ref{pr:CriteriaFlatness}).

For the other cases, we observe first that if $\g$ has a nonflat Ricci-flat metric, then so has $\g\times\R^k$; therefore, we only need to consider nice Lie algebras whose diagram does not have a disconnected node.

For $19$ maximal Lie algebras and $1$ family, we can find a plane $v\wedge w$ such that, given $\sigma$ as in
Table~\ref{table:maximal}, a generic $\sigma$-compatible metric \eqref{eqn:sigmametric} satisfies $\lela R(v,w)v,w\rira\neq 0$, not only on the maximal Lie algebra $\g$, but also for all nice Lie algebras $\g'\leq \g$ whose diagram does not have a disconnected node. Such a plane is indicated in the last column of Table~\ref{table:maximal}.

In dimension $5\leq n\leq 7$, there are exactly $17$ nonabelian nice nilpotent Lie algebras and $1$ family 
whose diagrams have no disconnected nodes and cannot be written as $\g'\leq \g$, with $\g$ one of the 19 Lie algebras or the family above. One is \texttt{64321:5}, which we have already shown to admit a nonflat Ricci-flat metric. The others are listed in Table~\ref{table:missing} together with an arrow-breaking $\sigma$ and a plane such that the generic $\sigma$-diagonal metric is nonflat.
\end{proof}

{\setlength{\tabcolsep}{2pt}
\begin{table}[thp]
\centering{
\caption{\label{table:missing} Nice nilpotent Lie algebras for which existence of a nonflat Ricci-flat metric does not follow from Table~\ref{table:maximal}.}
\begin{small}
\begin{tabular}{>{\ttfamily}l L L L}
\toprule
\textnormal{Name} & \g & \sigma & \text{plane}\\
\midrule
64321:3&0,0,- e^{12},e^{13},e^{14},e^{25}+e^{34}&(2\, 4)(1\, 6)&e_1\wedge e_2\\
64321:4&0,0,e^{12},e^{13},e^{14}+e^{23},e^{15}+e^{24}&(3\, 4)(2\, 5)(1\, 6)&e_1\wedge(e_2+e_3)\\
6431:2&0,0,e^{12},\pm e^{13},e^{23},e^{14}+e^{25}&(3\, 4)(2\, 6)(1\, 5)&e_1\wedge e_2\\
6431:3&0,0,e^{12},e^{13},e^{23},e^{15}+e^{24}&(3\, 5)(2\, 6)(1\, 4)&(e_1-e_3)\wedge e_2\\
6321:4&0,0,0,- e^{12},e^{14}+e^{23},e^{15}+e^{34}&(3\, 5)(2\, 4)(1\, 6)&e_1\wedge (e_2+e_3)\\
631:5&0,0,0,\pm e^{12},e^{13},e^{24}+e^{35}&(2\, 3)(1\, 6)&e_1\wedge e_2\\
631:6&0,0,0,e^{12},e^{13},e^{25}+e^{34}&(2\, 3)(1\, 6)&e_1\wedge e_2\\
754321:5&0,0,- e^{12},e^{13},e^{14},e^{15},e^{16}+e^{25}+e^{34}&(3\, 5)(1\, 7)&e_1\wedge e_3\\
754321:6&0,0,e^{12},e^{13},e^{14}+e^{23},e^{15}+e^{24},e^{16}+e^{34}&(4\, 6)(2\, 5)(1\, 7)&e_1\wedge e_2\\
754321:7&0,0,e^{12},e^{13},e^{14}+e^{23},e^{15}+e^{24},e^{16}+e^{25}&(3\, 5)(2\, 6)(1\, 7)& e_1\wedge e_3\\
\multirow{2}{*}{754321:9}&\multicolumn{1}{L}{0,0,(1-\lambda) e^{12} ,e^{13},\lambda e^{14}+e^{23},}&\multirow{2}{*}{$(3\, 5)(2\, 6)(1\, 7)$}& \multirow{2}{*}{$e_1\wedge e_3$}\\
&\multicolumn{1}{R}{e^{15}+e^{24},e^{16}+e^{34}+e^{25}} & & \\
75432:3&0,0,- e^{12},e^{13},e^{14}+e^{23},e^{15}+e^{24},e^{25}+e^{34}&(3\, 5)(2\, 7)(1\, 6)&e_1\wedge e_2\\
75421:4&0,0,e^{12},e^{13},e^{23},e^{15}+e^{24},e^{16}+e^{34}&(4\, 6)(2\, 5)(1\, 7)&e_1\wedge e_2\\
75421:6&0,0,- e^{12},e^{13},e^{23},e^{15}+e^{24},e^{14}+e^{26}+e^{35}&(3\, 4)(2\, 7)(1\, 6)&e_2\wedge e_3\\
75421:6&0,0,0,- e^{12},e^{14},e^{15}+e^{24},e^{13}+e^{26}+e^{45}&(3\, 4)(2\, 7)(1\, 6)&e_1\wedge e_2\\
74321:12&0,0,0,- e^{12},e^{14}+e^{23},e^{15}+e^{34},e^{16}+e^{35}&(3\, 6)(2\, 5)(1\, 7)&e_1\wedge e_2\\
74321:15&0,0,0,- e^{12},e^{14}+e^{23},e^{15}+e^{34},e^{16}+e^{24}+e^{35}&(3\, 6)(2\, 5)(1\, 7)& e_1\wedge e_2\\
\bottomrule
\end{tabular}
\end{small}
}
\end{table}
}
\begin{remark}\label{remark:infinitelymanydiffeomorphismtypes}
In dimension $7$, there exist continuous families of nilpotent Lie algebras admitting a nice basis. If the parameters are chosen to be rational, the corresponding nilpotent Lie groups admit a compact quotient; the resulting nilmanifolds are pairwise nonisomorphic (see \cite[Theorem 5]{Malcev}). This determines infinitely many diffeomorphism types of Ricci-flat manifolds in any dimension $\geq7$, by taking a product with a torus.
\end{remark}

It is well known that there is only one $6$-dimensional Lie algebra that does not admit any nice basis, namely:
\[(0,0,0,e^{12},e^{14},e^{15} + e^{23} +e^{24}),\]
denoted by $N_{6,1,4}$ in the classification of~\cite{Gong}. However, it can carry Ricci-flat metrics. For example, easy computations show that the following $\sigma$ (written with respect to the basis $\{e_1,\dotsc, e_6\}$) define a Ricci-flat $\sigma$-diagonal metric for any choice of the parameters $g_i$:
\[(1\,3)(2\,6)(4\,5),\qquad(1\,6)(2\,5),\qquad(1\,6)(2\,5)(3\,4).\]
A plane on which the restriction of the curvature is nonzero is given in the first case by $e_2\wedge e_4$, in the other two by $e_1\wedge e_2$.
\begin{corollary}\label{cor:dim6nn}
Every nonabelian 6-dimensional nilpotent Lie algebra has a nonflat Ricci-flat metric.
\end{corollary}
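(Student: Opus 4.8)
The plan is to split the $6$-dimensional nonabelian nilpotent Lie algebras into those admitting a nice basis and the unique one that does not, and to dispose of the first (large) class by a direct appeal to the Corollary above. Recall from~\cite{Gong}, as recorded in the discussion immediately preceding this statement and consistent with Table~\ref{table:classification}, that every $6$-dimensional nilpotent Lie algebra admits a nice basis, the sole exception being $N_{6,1,4}=(0,0,0,e^{12},e^{14},e^{15}+e^{23}+e^{24})$.

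Suppose first that $\g$ is a nonabelian $6$-dimensional nilpotent Lie algebra carrying a nice basis $\mcB$; then $(\g,\mcB)$ is a nice nilpotent Lie algebra of dimension $6\le 7$. The algebras $(0,0,e^{12})$ and $(0,0,0,e^{12})$ have dimensions $3$ and $4$, so $\g$ is isomorphic to neither of them, and the Corollary above applies verbatim to furnish a nonflat Ricci-flat metric on $\g$. No new computation is involved here: all of the work has already been carried out in establishing that Corollary, which itself rests on Theorem~\ref{teo:nicedimless7}, Proposition~\ref{pr:CriteriaFlatness}, and Tables~\ref{table:maximal} and~\ref{table:missing}. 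The only point one has to observe is precisely the dimension count that makes the two exceptions in that Corollary vacuous in dimension $6$.

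It remains to treat $\g=N_{6,1,4}$. Since this Lie algebra has no nice basis, the arrow-breaking machinery — in particular Proposition~\ref{prop:2OrderSigmaEnhancedGiveRicciFlat} — does not apply, and instead I would invoke the explicit verification recorded just above: the $\sigma$-diagonal metric~\eqref{eqn:sigmametric} attached to the order-two permutation $\sigma=(1\,3)(2\,6)(4\,5)$ (with, say, all $g_i=1$) is Ricci-flat by a direct application of formula~\eqref{eqn:ricddadad}, and one has $\lela R(e_2,e_4)e_2,e_4\rira\neq 0$, so that this metric is nonflat. This exhausts all cases, the abelian algebra $\R^6$ being explicitly excluded from the statement. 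The only subtlety — and it is a matter of bookkeeping rather than a genuine obstacle — is exactly this last point: $N_{6,1,4}$ lies outside the nice-Lie-algebra framework on which the rest of the paper is built, so it must be handled by the hand computation above rather than by a combinatorial arrow-breaking criterion; everything else is an immediate consequence of the preceding Corollary together with the classification of $6$-dimensional nilpotent Lie algebras.
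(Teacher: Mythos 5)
Your proposal is correct and follows essentially the same route as the paper, which treats this corollary as an immediate consequence of the preceding corollary (whose two exceptions are $3$- and $4$-dimensional, hence vacuous here) together with the classification fact that $N_{6,1,4}$ is the only $6$-dimensional nilpotent Lie algebra without a nice basis and the explicit $\sigma$-diagonal Ricci-flat nonflat metrics exhibited for it just before the statement.
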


\begin{remark} Given an order two permutation $\sigma$ which is the product of $k$ transpositions, the signature of any $\sigma$-diagonal metric is $(p,q)$ where $p,q\geq k$. Since the coefficients $g_i$ of a $\sigma$-diagonal metric can be chosen arbitrarily all possible signatures with $p,q\geq k$ can be obtained in Theorem \ref{teo:nicedimless7} and Corollary \ref{cor:dim6nn}.
\end{remark}

\FloatBarrier

\bibliographystyle{plain}

\bibliography{diagraminvolutions}

\small\noindent Dipartimento di Matematica e Applicazioni, Universit\`a di Milano Bicocca, via Cozzi 55, 20125 Milano, Italy.\\
\texttt{diego.conti@unimib.it}\\
\texttt{federico.rossi@unimib.it}\medskip

\small\noindent Université Paris-Saclay, CNRS, Laboratoire de mathématiques d’Orsay, 91405, Orsay, France and Universidad Nacional de Rosario, CONICET, Rosario, Argentina.\\
\texttt{viviana.del-barco@math.u-psud.fr}
\end{document}